\newcommand{\Real}{\mathbb R}
\newcommand{\cB}{\mathcal B}
\newcommand{\cZ}{\mathcal Z}
\def\XXint#1#2#3{{\setbox0=\hbox{$#1{#2#3}{\int}$}
     \vcenter{\hbox{$#2#3$}}\kern-1\wd0}}
\newcommand{\R}{\mathbb R}
\newcommand{\cS}{\mathcal S}
\newcommand{\e}{\varepsilon}
\newcommand{\al}{\alpha}
\newcommand{\la}{\lambda}
\newcommand{\A}{|A|^2}
\newcommand{\x}{|x|^2}
\newcommand{\D}{\nabla}
\newcommand{\Di}{\nabla_{i}}
\newcommand{\Dj}{\nabla_{j}}
\newcommand{\ra}{\rightarrow}
\newcommand{\La}{\triangle}
\newtheorem{thm}{Theorem}[section]
\newtheorem{lem}[thm]{Lemma}
\newtheorem{condition}[thm]{Condition}
\newtheorem{definition}[thm]{Definition}
\newtheorem{corollary}[thm]{Corollary}
\theoremstyle{remark}
\newtheorem{remark}[thm]{Remark}
\numberwithin{equation}{section}
\begin{document}
{\Large
\title{$\alpha$-Gauss Curvature flows with flat sides}
\author{ Lami Kim}
\address{Seoul National University, Seoul 151-747, Korea }
\email{lmkim@snu.ac.kr}
\author{Ki-ahm Lee}
\address{Seoul National University, Seoul 151-747, Korea}
\email{kiahm@snu.ac.kr}
\author{ Eunjai Rhee}
\address{Seoul National University, Seoul 151-747, Korea }
\email{erhee@math4.snu.ac.kr}

\maketitle

\begin{abstract}
In this paper, we study the deformation of the 2 dimensional convex surfaces in $\R^{3}$ whose speed at a point on the surface is proportional to $\alpha$-power of positive part of Gauss Curvature.
First, for $\frac{1}{2}<\alpha\leq 1$, we show that there is smooth solution if the initial data is smooth and strictly convex and that there is a viscosity solution with $C^{1,1}$-estimate before the collapsing time if the initial surface is only convex. Moreover, we show that there is a waiting time effect  which means the flat spot of the convex surface will persist for a while.
We also show the interface between the flat side and the strictly convex side of the surface remains smooth on $0 < t < T_0$ under certain necessary regularity and non-degeneracy initial conditions, where $ T_0$ is the vanishing time of the flat side.
 \end{abstract}

\section{Introduction}

We are concerned with the regularity of the $\alpha$-Gauss Curvature flow with flat sides, which is associated to the free boundary problem. This flow explains the deformation of a compact convex subjects moving with collision from any random angle. The probability of impact at any point $P$ on the surface $\Sigma$  is proportional to the $\alpha$-Gauss Curvature $K^{\alpha}$. Then the deformation of the surface $\Sigma$ can be described by the flow
\begin{equation}\label{eqn-1.0}
\begin{split}
\frac {\partial X}{\partial t} (x,t)&=- K^{\alpha}(x,t)\, \nu (x,t)\\
X(x,0)&=X_0(x)
\end{split}
\end{equation}
where $\nu$ denotes the unit outward normal and $\alpha>0$.
Now we are going to summarize the known results for the evolution of
the {\it strictly convex surfaces} following \eqref{eqn-1.0}, \cite{C1, C2}.

Various application of \eqref{eqn-1.0} has been discussed at
\cite{A2}: rolling stone on the hyperplanes ($\alpha=1$, \cite{F}),
the affine normal flows ($\alpha=\frac{1}{n+2}$, \cite{ST1,ST2}),
the gradient flows of the mean width in $L^p$-norm
($\alpha=\frac{1}{p-1}$, \cite{A2}), and  image process ($p=\frac14$,
\cite{AGLM}).

The dynamics and degeneracy of the diffusion varies depending on
$\alpha$. If $\alpha$ is smaller, it becomes more singular and the
solution gets regular instantaneously. On the other hand, if $\alpha$
is greater than $\frac{1}{n}$, it becomes degenerate and has waiting
time effect which means that the flat spot of the surface stays for a
while, \cite{A2}. Waiting time and finite speed of propagation caused by the degeneracy have been studied at the
 other well-known degenerate equations:
the Porous Medium Equation
$$u_t=\La u^m$$
and Parabolic $p$-Laplace Equation
$$u_t=\D\cdot (|\D u|^{p-2}\D u).$$

\subsection{The known results}\label{sec-intro-history}
Let $X(\cdot,\cdot):S^n\times [0,T)
\ra\R^{n+1}$ be a embedding and set $\Sigma_t=X(S^n,t)$. Since the
volume is decreasing in time and vanishes at finite time, there is  the first time, $T_0$, when
$vol(\Sigma_t)$ becomes zero.

Let us assume $\Sigma_0$ be strictly convex and
  smooth. Then $\Sigma_t$  is also smooth and strictly
  convex for $0<t<T_0$ for $\alpha=1$, \cite{T}.
If we rescale  $\Sigma_t$ to $\tilde{\Sigma}_t$ so that
  $vol(\tilde{\Sigma}_t )=vol(\Sigma_0)$, then   $\tilde{\Sigma}_t$  converges to a
  sphere, for $\alpha=1$ and $n=2$ \cite{A1} or for $\alpha=\frac{1}{n}$
  and $n\geq 2$, \cite{C1}. It is also true if $\alpha>\frac{1}{n}$ and
  if the initial surface is sufficiently close to the sphere, \cite{C1}.
And $\Sigma_t$ converges to a point if  $\alpha\in
  (\frac{1}{n+2},\frac{1}{n}]$, or if $\alpha\in (0,\frac{1}{n}]$ and $\Sigma_t$ has
  bounded isoperimetric ratio which is the ratio between the radius of
  the inner sphere and that of outer sphere, \cite{A2}.

Now let  $\Sigma_0$ be convex and
  smooth. For $\alpha>0$, there is a viscosity solution, $\Sigma_t$,  for $0<t<T_0$ which has
  uniform Lipschitz bound, \cite{A2}.
For $\frac{1}{2}<\alpha\leq 1$ and $n=2$, the convex viscosity solution, $\Sigma_t$, has a uniform
  $C^{1,1}$-estimate  for $0<t<T_0$, \cite{KL}.
For $\alpha = 1$ and $n=2$, the $C^{\infty}_{\delta}$-regularity of the strictly convex part of  the surface and the
  smoothness of the interface between the strictly convex part and
  flat spot have been proved at \cite{DL3}.

\subsection{The balance of terms}
In this paper, we are going to study  the regularity of $\Sigma_t$, when
the initial surface, $\Sigma_0$, has a flat spot for $n=2$.

We will assume for simplicity that the initial surface $\Sigma_0$ has
only one flat spot, namely that at $t$ we have
$\Sigma_t=\Sigma^1_t\cup\Sigma^2_t$ where $\Sigma^1_t$ is the flat spot and
$\Sigma^2_t$ is strictly convex part of $\Sigma_t$. The intersection between two regions is the free boundary $\Gamma_t=\Sigma^1_t\cap\Sigma^2_t$. The lower part of the surface $\Sigma_0$ can be written as a graph $z=f(x)$. And similarly we can write the lower part of $\Sigma_t$ as $z=f(x,t)$ for $x\in\Omega\subset\R^n$ where $\Omega$ is an open subset of $\R^n$.

The function $f(x,t)$ satisfies $\alpha$-Gauss Curvature flow:
\begin{equation}\label{eqn-f}
f_{t}=\frac{[\det (D^{2}f)]^{\alpha}}{(1+|\D
f|^2)^{\frac{\alpha(n+2)-1}{2}}}.
\end{equation}

Let's consider rotationally symmetric case first to see the balance between terms for $n=2$. If $f=f(r)$ is rotationally symmetric, $\eqref{eqn-f}$ can be written as
\begin{equation}\label{eqn-f1}
\begin{split}
f_t=\frac{f_r^{\alpha}f_{rr}^{\alpha}}{r^{\alpha}(1+f_r^2)^\frac{4\alpha-1}{2}}
\end{split}
\end{equation}

Let $r=\gamma (t)$ be the equation of the free boundary $\Gamma(f)=\partial\{f=0\}$.
The speed of boundary is given by
$$\gamma_t=-\frac{f_t}{f_r}=-\frac{f_r^{\alpha-1}f_{rr}^{\alpha}}{r^{\alpha}(1+f_r^2)^\frac{4\alpha-1}{2}}.$$

The regularity comes from the nondegenerate finite speed of the
free boundary before the flat spot converges to a lower
dimensional singularity at a focusing time. When
$f=(r-1)_+^{\beta}$ at a given time $t$, for $r \approx 1$,
$$ |\gamma_t|= s^{(\alpha-1)(\beta-1)} s^{\alpha (\beta-2)}\approx 1$$
for $s=r-1$, which implies $\beta=\frac{3\alpha-1}{2\alpha-1}$.

For the general $f=f(x,y,t)$, let $f=\frac{1}{\beta}g^{\beta}$ for
$\beta=\frac{3\alpha-1}{2\alpha-1}$. The equation for this pressure $g$ will be
\begin{equation}\label{eqn-g}
g_{t}=\frac{[g\det (D^{2}g)
+\theta(\alpha)(g_{x}^{2}g_{yy}+g_{y}^{2}g_{xx}-2g_{x}g_{y}g_{xy})]^{\alpha}}{(1+g^{2\beta-2}|\D
g|^2)^{\frac{4\alpha-1}{2}}}
\end{equation}
for $\theta(\alpha)=\beta-1=\frac{\alpha}{2\alpha-1}$.

Assuming $g_{\tau}=0$ at the boundary, the speed of boundary will
be
\begin{equation}
\gamma_{t}=-\frac{g_{t}}{g_{\nu}}=-\theta(\alpha)^{\al} \, g_{\nu}^{2\alpha-1} g_{\tau\tau}^{\alpha}
\end{equation}
for a tangential direction $\tau$ and a normal direction $\nu$ to $\partial \Omega$.

\subsection{Conditions for $f$}
\begin{condition}\label{cond}
Set $\Lambda (f)=\{f=0\}$ and $\Gamma (f)=\partial\Lambda (f)$.

\begin{enumerate}[(I)]
\item{({\it Nondegeneracy Condition})}
Our basic assumption on the initial surface is that the function
$f$ vanishes of the order $\text{dist}
(X,\Lambda(f))^{\frac{3\alpha-1}{2\alpha-1}}$ and that the
interface $\Gamma(f)$ is strictly convex so that the interface
moves with finite nondegenerate speed. Namely, setting $g=(\beta
f)^{\frac{1}{\beta}}$, we assume that at time $t=0$ the function
$g$ satisfies the following nondegeneracy condition: at $t=0$,
\begin{equation}\label{eqn-condition} 0<\lambda <| Dg(X) | < \frac{1}{\lambda} \quad
\quad \text{and} \quad 0<\lambda^2 <D_{\tau\tau}^2 g(X) <
\frac{1}{\lambda^2 } 
\end{equation}
for all $ X  \in  \Gamma _0$ and some positive number $\lambda >0$, where
$D^2_{\tau\tau}$ denotes the second order tangential
derivative at $\Gamma$.
Then  the initial speed of free boundary has the speed, at $t=0$,
\begin{equation}
0<\lambda^{4\alpha-1} <| \gamma_{t}| <
\frac{1}{\lambda^{4\alpha-1}}.
\end{equation}
\item{({\it Before Focusing of Flat Spot})}
Let $T$ be any number on $0<T<T_0$, so that the flat side
$\Sigma^1_t$ is non-zero. Since the area is non-zero,
$\Sigma^1_t$ contains a disc $D_{\rho_0}$ for some $\rho_0>0$. We
may assume that
\begin{equation}\label{co1}
D_{\rho_0} = \left\{ X \in R^2\,:\,|X| \le
\rho_0\right\} \subset \Sigma^1_t\quad\text{for $0\leq t\leq T_0$}.
\end{equation}
\item{({\it Graph on a Neighborhood of the Flat spot $\Sigma^1_t$})} We will also assume, without loss of generality, throughout the paper that
\begin{equation}\label{maxf}
\max_{x\in \Omega (t)} f(\cdot ,t)\geq 2 , \quad 0\leq t\leq T_0 
\end{equation}
where $\Omega (t)=\{X=(x,y)\in R^2 \,:\,|D f|(X,t) < \infty \}$.
Set
\begin{equation}\label{omeg}
\Omega_P(t)= \left\{ x\in R^2\,:\,f(x,y,t) \le f(P)\right\}.
\end{equation}

\end{enumerate}

\end{condition}

\subsection{The concept of regularity} Let's assume $P_0=(x_0,y_0,t_0)$ is an interface point and $t_0$ is sufficiently small. Then condition $\eqref{eqn-condition}$ is satisfied at $t_0$ for small constant $c$. We can assume 
\begin{equation}\label{co2}
g_x(P_0) \geq c >0 \quad \text{for some} \quad c >0
\end{equation}
by rotating the coordinates. Also by transforming the free-boundary to a fixed boundary near $P_0$, we can obtain the map $x=h(z,y,t)$ where $(z,y,t)$ is around $Q_0=(0,y_0,t_0)$ and then the free-boundary $g=0 $ is transformed into the fixed boundary $z=0$. From the calculation on $g(h(z,y,t),y,t)=z$, we have the fully nonlinear degenerate equation
\begin{equation}\label{eqn-hhhh}
-\frac{h_t}{h_{z}} = \frac{(z\, ( h_{zz} h_{yy} -h_{zy}^2) -
\theta(\al)h_z
h_{yy})^{\alpha}}{h_{z}^{4\alpha}}\frac{h_{z}^{4\alpha-1}} {(z^2 +
h_z^2+ z^2 h_y^2)^{\frac{4\alpha-1}{2}}}, \qquad z >0
\end{equation}
implying that under \eqref{eqn-condition} and initial regularity conditions, the linearized operator
\begin{equation}\label{eqn-hl}
\tilde  h_t  = z\, a_{11} \tilde h_{zz} + 2 \sqrt z \, a_{12} \tilde h_{zy}
+ a_{22} \tilde h_{yy} + b_1 \tilde h_z + b_2 \tilde h_y 
\end{equation}
where $(a_{ij})$ is strictly positive and $b_1 \geq \nu >0$ for some $\nu >0$.

\begin{definition} \label{def-sp} For the Riemannian metric $ds$ with $ds^2 = {dz^2 \over z}+dy^2\ $, let distance between $Q_1=(z_1,y_1)$ and $Q_2=(z_2,y_2)$
in the metric $s$ be $s(Q_1,Q_2) = |\sqrt {z_1} - \sqrt{z_2}| + |y_1-y_2|$ and the parabolic distance between $ Q_1=(z_1,y_1,t_1)$ and $Q_2=(z_2,y_2,t_2)$ be $s(Q_1,Q_2) = |\sqrt {z_1} - \sqrt{z_2}| + |y_1-y_2| + \sqrt{|t_1-t_2|}$. Then we define $C^{\gamma}_s$, $\gamma \in (0,1)$  as the space  of
 H\"older continuous functions with respect to the metric $s$ and $C^{2+\gamma}_s$ as the space of all functions  $h$ with 
$$h, h_z, h_y, h_t,   z\, h_{zz}, z\sqrt z\,  h_{zy}, h_{yy} \in C^\gamma_s.$$
\end{definition}

\begin{remark}
When we consider the equation 
\begin{equation} \label{eqn-mod}
h_t = z\,  h_{zz} + h_{yy}  + \nu \,h_z
\end{equation}
on the half-space with $\nu >0$, which doesn't have the other condition of h on $z=0$, the Riemannian metric $ds$ decides the diffusion of the equation.
\end{remark}

\begin{remark}
If the transformed function $h \in C^{2+\gamma}_s$, we say that $g \in C^{2+\gamma}_s$ around the interface $\Gamma$.
\end{remark}
\subsection{Main Theorems}
Now we are going to state main theorems when the initial surface is just convex having a flat spot.

\begin{thm}\label{thm-1}
Let us assume $\frac12<\alpha\leq 1$. If $\Sigma_0$ is  convex, any viscosity solution  $\Sigma_t$ of \eqref{eqn-1.0} is $C^{1,1}$ for $0<t<T_0$. Moreover the strictly convex part,  $\Sigma^2_t$,  is smooth for $0<t<T_0$.
\end{thm}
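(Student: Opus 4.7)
The theorem has two independent assertions, and the $C^{1,1}$ statement is essentially a citation. Existence of a viscosity solution of \eqref{eqn-1.0} up to the vanishing time $T_0$ for any convex initial surface is due to Andrews \cite{A2}, and the uniform $C^{1,1}$ bound in the regime $\tfrac{1}{2}<\alpha\leq 1$, $n=2$ is the theorem of \cite{KL} quoted in Section~\ref{sec-intro-history}. So for this assertion the plan is simply to invoke those two results.

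For the smoothness of the strictly convex part $\Sigma^2_t$, the plan is to work locally in its interior, observe that \eqref{eqn-f} becomes fully nonlinear and uniformly parabolic there, and then bootstrap. Fix $t_1\in(0,T_0)$ and $P\in\Sigma^2_{t_1}$; in a neighborhood of $P$ the surface is a graph $z=f(x,y,t)$ solving \eqref{eqn-f}. The $C^{1,1}$ estimate gives $|D^2 f|\leq M$, and strict convexity of $\Sigma^2_{t_1}$ at $P$ gives $\det D^2 f(P,t_1)>0$. I would first promote this to a lower bound $\det D^2 f\geq\delta>0$ on a small parabolic cylinder $Q\ni(P,t_1)$. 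On $Q$ the right-hand side of \eqref{eqn-f} is then smooth and uniformly elliptic in $D^2 f$, so the equation is a fully nonlinear uniformly parabolic one.

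From this point the regularity bootstrap is standard. After a change of unknown that makes the operator concave in $D^2 f$ (exploiting the concavity of $\log\det$ on positive definite matrices, and rewriting the equation accordingly), the parabolic Evans--Krylov theorem applies on $Q$ and yields $C^{2,\gamma}$ interior estimates in the parabolic sense; parabolic Schauder theory then iterates to produce $C^\infty$ regularity on a slightly smaller cylinder. Covering $\Sigma^2_t$ by such cylinders delivers smoothness on the whole strictly convex part.

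The main obstacle is the non-degeneracy statement $\det D^2 f\geq\delta>0$ on the full parabolic cylinder $Q$ rather than just at the single space-time point $(P,t_1)$. Pointwise strict convexity does not propagate automatically under a degenerate flow. My plan for this step is to combine the $C^{1,1}$ upper bound on $D^2 f$ with continuity of the second fundamental form on the open stratum $\Sigma^2_{t_1}$ to obtain a spatial neighborhood in which the Hessian is bounded below at time $t_1$, and then extend this over a short time interval via a maximum principle applied to the evolution equation of $\det D^2 f$ on the strictly convex stratum (where the flow is already strictly parabolic and the evolution of $\det D^2 f$ is tractable). Once a parabolic cylinder $Q$ with non-degenerate Hessian is secured, the classical bootstrap described above completes the proof.
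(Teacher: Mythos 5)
The key gap is in your propagation of strict convexity. You want $\det D^2 f\geq\delta>0$ on a parabolic cylinder $Q$, and you propose to obtain it by a maximum principle for the evolution of $\det D^2 f$ (equivalently of $K$) on the strictly convex stratum. This does not close. The stratum $\Sigma^2_t$ is open and sits next to the degenerate region, so any local cylinder $Q$ in it has a lateral parabolic boundary on which you would also need a lower bound on $\det D^2 f$ -- and the moving strictly convex set is itself being \emph{defined} by the positivity of the quantity you are trying to bound, which is circular. There is a more elementary obstruction as well: at the $C^{1,1}$ level the second fundamental form of the viscosity solution is merely $L^\infty$, so ``continuity of the second fundamental form on $\Sigma^2_{t_1}$'' is not available; it is essentially what you are trying to prove. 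Any curvature maximum principle has to be run on the smooth strictly convex approximations and then passed to the limit, and for a lower bound on $K$ one needs an a priori estimate that does not refer to the interior of a time-dependent cylinder.

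The paper supplies exactly this estimate with Chow's Harnack inequality for the $\alpha$-Gauss curvature flow (Lemma \ref{lem-lK}), applied in the Gauss map parametrization. For normal directions $z$ with $\|z+e_{n+1}\|\geq\eta$ (i.e.\ bounded away from the fixed normal of the flat spot), the Harnack estimate gives $K^{\alpha}(z,t_2)\geq c_1 K^{\alpha}(z,t_1)$ for $0<\delta_0\leq t_1\leq t_2\leq T$, and the short-time regularity of Theorem \ref{thm-1.3} supplies a positive lower bound at time $t_1=\delta_0$. This yields a uniform lower bound on $K$ on $\cS_\eta$, which, together with the mean curvature bound of Lemma \ref{lem-uH}, pinches the principal curvatures and makes the support function equation uniformly parabolic on $\tilde\Sigma_t^2$; Krylov--Safonov and Schauder then give smoothness. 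Two further small discrepancies: the paper does not merely cite [KL] for $C^{1,1}$, but proves the uniform bound on $|A|^2$ here via the maximum principle on $P=H/(\psi+4R^2-|x|^2)$ for the smooth approximations $\Sigma_{t,\e}$ and passes to the limit; and the Evans--Krylov step is cleanest in the support function formulation on $S^n$ rather than for the graph unknown $f$, where the concavity structure of $(\det)^{\alpha}$ for $\alpha>1/n$ is not automatic. Replacing your local $\det D^2 f$ argument with the Harnack-based Lemma \ref{lem-lK} repairs the proposal.
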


The following short time existence of  $C^{\infty}_s$-solution with a flat spot has be essentially proved in
\cite{DH} since the linearized  equation for $h$, \eqref{eqn-hy}, is in the same class of
operators considered in \cite{DH} because of the
conditions, \eqref{eqn-condition}, as \cite{DH}.
Therefore  their Schauder theory can be applied to \eqref{eqn-hy} and then
the application of implicit function theorem gives the short time
existence as  \cite{DH}.
\begin{thm}\label{thm-1.3}[Short Time Regularity]\cite{DH}
For $\frac12<\alpha\leq 1$, assume that $g=(\beta f)^{\frac{1}{\beta}}$
is of class $C^{2+\gamma}$ up to the interface $z=0$ at time $t=0$, for
some $0<\gamma<1$, and satisfies Conditions \ref{cond} for $f$. Then there exists a time $T>0$ such that the
$\alpha-$Gauss Curvature Flow \eqref{eqn-1.0}  admits a solution $\Sigma(t)$ on $0\leq t \leq T$. In addition the function $g=(\beta f)^{\frac{1}{\beta}}$ is smooth up to the interface $z=0$ on $0< t \leq T$. In particular the junction $\Gamma (t)$ between the strictly convex and the flat side will be a smooth curve for all $t$ in $0< t \leq T$.
\end{thm}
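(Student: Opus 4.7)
The plan is to set up a fixed-boundary problem for $h$ by composing the pressure change $g=(\beta f)^{1/\beta}$ with the partial hodograph (von Mises) transformation $x = h(z,y,t)$ already used in the excerpt, linearize around the initial data, and then invoke the Schauder theory and implicit function theorem machinery developed by Daskalopoulos--Hamilton. Because the initial $g$ lies in $C^{2+\gamma}$ up to $z=0$ and satisfies Condition \ref{cond}, in particular \eqref{eqn-condition}, the hodograph change is well defined in a one-sided neighborhood of an arbitrary boundary point $P_0\in\Gamma_0$ and produces a $C^{2+\gamma}$ initial datum $h(\cdot,\cdot,0)$ on $\{z\geq 0\}$ with $h_z\geq c>0$. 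The transformed equation is the fully nonlinear degenerate equation \eqref{eqn-hhhh}.

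Next, I would verify that linearizing \eqref{eqn-hhhh} at the initial data yields an operator of the form \eqref{eqn-hl} on the half-space $\{z\geq 0\}$ with $(a_{ij})$ strictly positive definite and, crucially, $b_1\geq \nu>0$. The strict positivity of $(a_{ij})$ comes from the quantitative convexity in \eqref{eqn-condition}: the coefficient of $\tilde h_{yy}$ is controlled by $\theta(\alpha)\, h_{yy}(P_0)/h_z(P_0)^{4\alpha-1}>0$, the coefficient of $z\,\tilde h_{zz}$ by the mixed Hessian terms in \eqref{eqn-hhhh} after extracting the vanishing factor $z$, and the off-diagonal $\sqrt z\,\tilde h_{zy}$ piece appears with the natural weight. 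The strict positivity $b_1\geq \nu>0$ encodes the nondegenerate speed $|\gamma_t|>0$ and is exactly the structural assumption needed for the degeneracy at $z=0$ to be of the Daskalopoulos--Hamilton type, ensuring that the intrinsic metric $ds^2=dz^2/z+dy^2$ of Definition \ref{def-sp} is the correct diffusion scale.

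With the linearization placed in the DH class, I would apply the Schauder estimates for operators of type \eqref{eqn-hl} in the cycloidal spaces $C^{\gamma}_s$ and $C^{2+\gamma}_s$ to obtain invertibility (with controlled inverses) on a short time interval. The short-time existence of a $C^{2+\gamma}_s$ solution $h$ then follows from the implicit function theorem exactly as in \cite{DH}: one writes the fully nonlinear equation in the form $F(h)=0$, checks that $DF$ at the initial data equals the linear operator \eqref{eqn-hl}, and uses the Schauder inverse to solve for $h$ on $[0,T]$ for some $T>0$. Bootstrapping via repeated differentiation of the equation in $y$ and $t$ and application of the same linear theory upgrades $h$ to $C^\infty_s$ up to $\{z=0\}$.

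Undoing the transformations gives the smoothness statements of the theorem: $h\in C^\infty_s$ translates into $g=(\beta f)^{1/\beta}\in C^{2+\gamma}_s$ (and then $C^\infty_s$) up to $\Gamma(t)$ in the sense of the second remark, and the free boundary $\Gamma(t)=\{g=0\}$ is given locally as a graph $x=h(0,y,t)$, which is smooth in $(y,t)$ by the regularity of $h$. Globalization around $\Gamma_0$ uses the strict convexity of $\Gamma_0$ to cover it by finitely many hodograph charts. The main obstacle is precisely the first verification: matching the coefficients arising from the $\alpha$-dependent equation \eqref{eqn-hhhh} with the structural hypotheses of the DH Schauder framework so that one may quote the linear theory as a black box; once this is done the existence, bootstrap and free-boundary regularity follow the standard pattern of \cite{DH}.
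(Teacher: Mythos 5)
Your proposal follows essentially the same route as the paper, which itself gives no detailed proof of Theorem \ref{thm-1.3} but only the remark that the linearization \eqref{eqn-hy} of the hodograph-transformed equation \eqref{eqn-hhhh} lies in the class of degenerate operators treated in \cite{DH} (thanks to \eqref{eqn-condition}), so that the Daskalopoulos--Hamilton Schauder theory and the implicit function theorem yield short-time existence and smoothness up to $z=0$. Your elaboration of the coefficient verification, the bootstrap, and the globalization by covering $\Gamma_0$ with hodograph charts is consistent with how the paper later checks the same structural conditions (Lemmas \ref{lem-51} and \ref{lem-52}), so no further comment is needed.
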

One of  main results in this paper is the following long time regularity
of the solution.
\begin{thm}\label{thm-1.4}[Long Time Regularity]
Under the assumptions of Theorem \ref{thm-1.3}, the function $g=(\beta
f)^{\frac{1}{\beta}}$ remains smooth up to the interface $z=0$ on $0< t
<T$ for all $T< T_0$. And the interface  $\Gamma_t$ between the
strictly convex and the flat side will be smooth curve for all $t$ in $0<t<T_0$.
\end{thm}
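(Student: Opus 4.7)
\medskip
\noindent\textbf{Proof proposal for Theorem \ref{thm-1.4}.}
The natural strategy is a continuation argument: let
$$T^*=\sup\{\,T>0 : g\text{ is smooth up to }\{z=0\}\text{ on }[0,T]\,\text{ and }\Gamma_t\text{ is smooth on }[0,T]\,\}.$$
Theorem \ref{thm-1.3} gives $T^*>0$. The goal is to prove $T^*=T_0$. Suppose for contradiction that $T^*<T_0$. Then, by Condition \ref{cond}(II), the flat side $\Sigma^1_t$ still contains the disk $D_{\rho_0}$ at time $T^*$, so the problem is still genuinely of free boundary type. If I can show that the transformed function $h$ of \eqref{eqn-hhhh} satisfies, uniformly on $[0,T^*]$ near the fixed boundary $\{z=0\}$,
\begin{equation*}
h\in C^{2+\gamma}_s \qquad\text{together with the nondegeneracy conditions }\eqref{eqn-condition},
\end{equation*}
then I can apply Theorem \ref{thm-1.3} with initial time $t=T^*$ and obtain a smooth extension of the solution on $[T^*,T^*+\delta]$, contradicting the maximality of $T^*$.

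The plan therefore splits into two a priori estimates that must be carried on the time interval $[0,T^*]$. First, I would establish the \emph{preservation of nondegeneracy}: the bounds
$$\lambda(T^*)<|Dg(X,t)|<\lambda(T^*)^{-1},\qquad \lambda(T^*)^2<D^2_{\tau\tau}g(X,t)<\lambda(T^*)^{-2}$$
for $X\in\Gamma_t$ and $t\in[0,T^*]$, with a constant $\lambda(T^*)>0$ depending only on $T^*<T_0$ and the initial data. The natural approach is to differentiate \eqref{eqn-g} or equivalently the linearized equation \eqref{eqn-hl}, view the quantities $g_\nu$ and $g_{\tau\tau}$ (or rather suitable functions of $h_z$, $h_{yy}$) as solving degenerate parabolic inequalities, and apply the maximum principle combined with the strictly positive drift $b_1\geq\nu>0$ in \eqref{eqn-hl}. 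The positivity of the drift in the $z$-direction is exactly what keeps $h_z$ bounded away from zero at the boundary and prevents the free-boundary speed from degenerating before the focusing time $T_0$.

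Second, assuming the nondegeneracy is preserved, I would establish uniform $C^{2+\gamma}_s$ bounds on $h$ up to $\{z=0\}$. For this I would use the degenerate Schauder theory of Daskalopoulos--Hamilton for operators of the form $z\,a_{11}h_{zz}+2\sqrt z\,a_{12}h_{zy}+a_{22}h_{yy}+b_1h_z+b_2h_y$ in the metric $ds^2=dz^2/z+dy^2$: once $h$ is $C^{2+\gamma}_s$ the coefficients $a_{ij},b_i$ of \eqref{eqn-hl} are themselves $C^\gamma_s$ and are uniformly strictly positive (thanks to step one), so the Schauder estimate gives
$$\|h\|_{C^{2+\gamma}_s(Q_r)}\leq C\bigl(\|h\|_{L^\infty}+\text{initial data}\bigr),$$
and successive differentiation in $y$, $t$, together with commutator arguments in $z$, yields $C^{k+\gamma}_s$ bounds for every $k$. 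This gives smoothness of $g$ up to the interface on $[0,T^*]$; smoothness of $\Gamma_t$ then follows from the inverse map $y\mapsto h(0,y,t)$ together with $h_z>0$.

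The main obstacle, and where most of the work lies, is the first step: proving the \emph{nondegeneracy of $g_\nu$ and of the tangential second derivative along $\Gamma_t$} for $t<T_0$. In the case $\alpha=1$ this is the heart of \cite{DL3}; for $\tfrac12<\alpha\leq 1$ one has to redo the maximum principle argument for the relevant auxiliary quantities, using the explicit form of \eqref{eqn-g} and the positivity of $\theta(\alpha)=\alpha/(2\alpha-1)$, which is available precisely on the range $\alpha>\tfrac12$. Once that is in hand, the Schauder/bootstrap loop and continuation are relatively standard.
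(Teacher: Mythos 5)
Your overall architecture is the same as the paper's: a continuation argument (maximal time $T^*$ of smoothness, assume $T^*<T_0$, extend by short-time regularity for a contradiction), divided into (a) uniform preservation of the nondegeneracy conditions \eqref{eqn-condition} and (b) a uniform $C^{2+\gamma}_s$ bound on the transformed function $h$ via the degenerate Schauder theory of Daskalopoulos--Hamilton. Your step (b) matches the paper's Section 5 (Lemmas \ref{lem-51}--\ref{lem-53} and Theorem \ref{thm-6.10}).

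The gap is in step (a), where you propose to prove preservation of nondegeneracy by ``applying the maximum principle combined with the strictly positive drift $b_1\geq\nu>0$ in \eqref{eqn-hl}.'' This is circular: in the paper the positivity of the drift coefficient in the linearized equation (Lemmas \ref{lem-51}, \ref{lem-52}) is itself a \emph{consequence} of the nondegeneracy estimates, specifically of the decay rates of second derivatives in Corollary \ref{cor-44} and the Aronson--B\'enilan-type estimate of Lemma \ref{lem-AB}, and cannot be taken as given while proving them. The a priori estimates are instead established directly at the nonlinear level, by maximum principle applied to carefully chosen auxiliary quantities on \eqref{eqn-g}: $|Dg|^2$ and $x\cdot Dg$ for the gradient bounds (Lemma \ref{lem-31}); the quantity $X=g_y^2g_{xx}-2g_xg_yg_{xy}+g_x^2g_{yy}+g\Delta g+\theta|Dg|^2$ for the upper bound on $g_{\tau\tau}$ (Lemma \ref{lem-43}); $Z=\det D^2g/(g_x^2g_{yy}+g_y^2g_{xx}-2g_xg_yg_{xy})+b|\nabla g|^2$ for the Aronson--B\'enilan bound (Lemma \ref{lem-AB}); and $\mathcal Z=\max_\gamma(gD_{\gamma\gamma}g+\theta|D_\gamma g|^2)$ for the global optimal second-derivative bound (Lemma \ref{lem-43-1}). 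The latter three are precisely the quantities that had to be modified or introduced to absorb the new $(1-\alpha)$-dependent error terms when $\alpha<1$, which is the paper's main technical contribution, and your proposal does not identify them. You also omit a genuinely necessary ingredient from Section \ref{sec_3.1}: the differential Harnack inequality of \cite{C3}, applied through the rescaling \eqref{eqn-2.1.1}, which gives the finite and \emph{nondegenerate} speed of the $\e$-level sets (Lemma \ref{thm-24}, Theorem \ref{thm-33}) and in turn the two-sided bound $c\leq K^\alpha/g^{1/(2\alpha-1)}\leq c^{-1}$ of Lemma \ref{lem-41}; without it the lower bound on $g_{\tau\tau}$ in Corollary \ref{cor-44} does not close.
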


To show Theorem \ref{thm-1.4},  we follow the main steps at \cite{DL3}. But the
 exponent $\alpha$ creates large number of nontrivial terms especially in the estimate
 of the second derivatives. New quantities have been considered to
 absorb the effect of  terms depending on $(1-\alpha)$ at Lemma \ref{lem-43}. Optimal regularity  and  Aronson-B\'{e}nilan type estimate have been proved at Lemma \ref{lem-AB} and \ref{lem-43-1}.

\section{Convex surface.}
\subsection{Evolution of the metric and curvature}

The metric and second fundamental form can be defined by
$$g_{ij}=\left<\frac{\partial X}{\partial x^i},\frac{\partial X}{\partial x^i}\right> \quad \text{and } \quad  h_{ij}=-\left<  \frac{\partial^2 X}{\partial x^i\partial x^j},\nu\right>$$
with respect to a local coordinates $\{x_{1},\cdots,x_{n}\}$ of $\Sigma_{t}$ and $\nu$ is the outward unit normal to $\Sigma_{t}$. Also the Weingarten map is given by
$$h^{i}_{j}=g^{ik}h_{kj},$$
and then $\sigma_{k}=\sum_{1\leq i_{1} < \cdots < i_{k}\leq n}\lambda_{i_{1}}\lambda_{i_{2}}\cdots\la_{i_{k}}$,
$H=\text{trace}(h)=\sigma_{1}=\sum_{1\leq i\leq n}\lambda_i$,
$K=\det (h)=\sigma_{n}=\lambda_{1}\lambda_{2}\cdots\la_{n}$, and
$|A|^{2}=h_{ij}h^{ij}=\lambda_{1}^{2}+\cdots +\lambda_{n}^{2}$ where $\lambda_{1},\cdots,\lambda_{n}$ are the eigenvalues of the Weingarten map.

The evolution of the metric, second fundamental form, and curvature are the following.
\begin{lem}\label{ev-h}
Let $X(x,t)$ be a smooth solution of \eqref{eqn-1.0}. Then we have the following. The proof can be referred to Chapter 2, \cite{Z}. Let $\Box$ denote $K^{\al} (h^{-1})^{kl} \D _k \D _l$.

    \begin{enumerate}[(i)]
\item $\displaystyle\frac{\partial g_{ij}}{\partial t}=-2K^{\alpha}h_{ij}$
\item $\displaystyle\frac{\partial N}{\partial t}=-g^{ij}\frac{\partial K^{\al}}{\partial x^i}\frac{\partial X}{\partial x^j}=-\D^j K^{\al}\frac{\partial X}{\partial x^j} $
\item $  \displaystyle\frac{\partial h_{ij}}{\partial t}=\al \Box h_{ij}+\al ^2 K^{\al}(h^{-1})^{kl}(h^{-1})^{mn}\D_ih_{kl}\D_jh_{mn}-\al K^{\al}(h^{-1})^{km}(h^{-1})^{nl}\D_ih_{mn}\D_jh_{kl}$\\
           $\quad +\al K^{\al}H h_{ij}-(1+2\al)K^{\al}h_{jl}h^{l} _{i}$ 
\item $\displaystyle\frac{\partial K}{\partial t}=\al \Box K+\al(\al-1)K^{\al -1}(h^{-1})^{ij}\D_i K\D_j K+K^{\al +1}H$
\item $\displaystyle\frac{\partial K^{\al}}{\partial t}=\al \Box K^{\al}+\al K^{2\al}H$
\item $ \displaystyle  \frac{\partial H}{\partial t} =\al \Box H+{\al}^2 {K}^{\al -2}g^{ij}\D_iK\D_jK-\al K^{\al}g^{ij}(h^{-1})^{km}(h^{-1})^{nl}\D_ih_{mn}\D_jh_{kl} +\al K^{\al }H^2 $\\
$+(1-2\al )K^{\al}|A|^2 .$

\end{enumerate}
\end{lem}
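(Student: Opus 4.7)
The plan is to derive each formula by direct differentiation of the defining geometric quantity, invoking only three standard ingredients: the flow equation $\partial_t X = -K^{\al}\nu$, the Gauss--Weingarten relations
\begin{equation*}
\partial_i\partial_j X = \Gamma^k_{ij}\,\partial_k X - h_{ij}\,\nu, \qquad \partial_i\nu = h_i^{\,k}\,\partial_k X,
\end{equation*}
and Simons' identity $\D_i\D_j h_{kl} = \D_k\D_l h_{ij} + (\text{cubic in }h)$, which on a hypersurface in $\R^{n+1}$ follows from Codazzi together with the Gauss equation $R_{ijkl} = h_{ik}h_{jl} - h_{il}h_{jk}$. All computations are purely algebraic and carried out in Chapter~2 of \cite{Z}; I sketch the structure only.

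For (i), I would differentiate $g_{ij} = \langle\partial_i X,\partial_j X\rangle$ in $t$, commute $\partial_t$ and $\partial_i$ on the embedding, substitute $\partial_i(-K^{\al}\nu) = -(\Di K^{\al})\nu - K^{\al}h_i^{\,k}\partial_k X$, pair with $\partial_j X$, and symmetrize in $i,j$. For (ii), since $|\nu|=1$ the vector $\partial_t\nu$ is tangential; writing $\partial_t\nu = \mu^j\,\partial_j X$ and differentiating $\langle\nu,\partial_j X\rangle = 0$ in $t$ immediately yields $\mu^j = -g^{jk}\Dk K^{\al}$.

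For (iii), I would work at a fixed point in geodesic normal coordinates. Starting from $h_{ij} = -\langle\partial_i\partial_j X,\nu\rangle$ and using (ii) together with $\partial_i\partial_j\nu = (\Di h_{jk})\partial_k X - h_{ik}h_{jk}\nu$ at the basepoint, the normal component yields $\partial_t h_{ij} = \Di\Dj K^{\al} - K^{\al}h_{il}h^{\,l}_j$. Expanding $\Di\Dj K^{\al}$ by the chain rule, with $\Di K^{\al} = \al K^{\al}(h^{-1})^{kl}\Di h_{kl}$ and $\Di(h^{-1})^{kl} = -(h^{-1})^{km}(h^{-1})^{nl}\Di h_{mn}$, produces the two quadratic-in-$\D h$ terms with coefficients $\al^2$ and $-\al$, plus the remaining $\al K^{\al}(h^{-1})^{kl}\Di\Dj h_{kl}$; Simons' identity swaps indices in this last term, contributing $\al\Box h_{ij}$ plus a cubic-in-$h$ correction that combines with the bare $-K^{\al}h_{il}h^l_j$ to produce $\al K^{\al}H\,h_{ij} - (1+2\al)K^{\al}h_{il}h^l_j$.

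Formulas (iv)--(vi) are chain-rule contractions. Tracing (iii) against $(h^{-1})^{ij}$ --- and using (i) to handle the implicit $g^{ij}$ in $h^i_j$ --- yields (iv), in which the $\al(\al-1)$ quadratic $\D K$ term emerges from collapsing the two quartic-in-$\D h$ pieces via $(h^{-1})^{ij}\Di h_{jk} = \Di\log K$. Formula (v) is the one-line chain rule $\partial_t K^{\al} = \al K^{\al-1}\partial_t K$, and (vi) follows from $H = g^{ij}h_{ij}$ together with (i), (iii) and one further application of Simons inside the trace. The main bookkeeping obstacle throughout is (iii) and (vi): verifying that the prefactors $\al^2$, $-\al$, $-(1+2\al)$ in (iii) and $(1-2\al)\A$ in (vi) emerge exactly as stated after the chain-rule $\al$ factors, the $\D(h^{-1})$ correction, the Simons cubic contributions, and the direct $h_{il}h^l_j$ term are all collected; a careful comparison with Zhu's derivation resolves this.
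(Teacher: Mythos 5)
The paper offers no proof beyond the citation to Chapter 2 of \cite{Z}, and your sketch reproduces the standard computation there: differentiate the defining quantity, commute $\partial_t$ with spatial derivatives, substitute the flow law and the Gauss--Weingarten relations, and convert $(h^{-1})^{kl}\D_i\D_j h_{kl}$ to $\Box h_{ij}$ by Simons' identity; this is the right and essentially the only route. A few bookkeeping cautions if you carry the sketch out in full. With the paper's stated conventions (outward $\nu$, $h_{ij}=-\langle\partial_i\partial_j X,\nu\rangle$, $\partial_t X=-K^{\al}\nu$), the computation in (ii) actually gives $\partial_t\nu=+\D^j K^{\al}\,\partial_j X$, opposite in sign to the lemma and to your sketch; this looks like a typo in the statement, possibly carried over from a different sign convention in \cite{Z}, and is harmless downstream because $\langle\partial_i\partial_j X,\partial_t\nu\rangle$ vanishes at the base point in normal coordinates, so the derivation of (iii) is unaffected. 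The coefficients $-(1+2\al)$ in (iii) and $(1-2\al)$ in (vi) are the $n=2$ specializations: contracting the Simons cubic with $(h^{-1})^{kl}$ gives $Hh_{ij}-n\,h_i^{\,m} h_{mj}$, which together with the direct term $-K^{\al} h_{il}h_j^{\,l}$ yields $-(n\al+1)K^{\al} h_{il}h_j^{\,l}$, and this equals $-(1+2\al)K^{\al} h_{il}h_j^{\,l}$ only when $n=2$. Finally, (iv)--(vi) require no further application of Simons once (iii) is in hand: for (vi) the trace against $g^{ij}$ commutes with $\Box$ since $\D g=0$, and in (iv) the $\al(\al-1)$ coefficient arises from expanding $\D_k\D_l\log K=K^{-1}\D_k\D_l K-K^{-2}\D_k K\,\D_l K$ inside the traced diffusion term (the remaining quadratic-in-$\D h$ piece cancels the $-\al$ term after relabelling), rather than from collapsing the two quadratic $\D h$ pieces directly as your sketch suggests. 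None of these affects the soundness of the approach.
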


\subsection{Curvature Estimates}

Now we are going to show the regularity of $\Sigma_t$. The following Lemma was proved in \cite{A2, KL}.
\begin{lem} \label{lem-uK} \cite {A2, KL}  Let $\Sigma_0$ be convex and $\alpha>0$. Then
\item 
\begin{enumerate}[(i)]
\item  There is a constant $C>0$ such that
$$ \sup_{x\in \Sigma ,\ 0<t<T_0 } K^{\al} (x,t)\leq C(\alpha)=\max \Bigg(\sup_{x\in \Sigma } K^{\al}(x,0), \bigg( \displaystyle\frac{2\al +1}{2 \al \rho_0} \bigg) ^{2\al }\Bigg) .$$

\item  $\inf_{x\in \Sigma ,\ 0<t<T_0 }K^{\al} \geq  e^{C(t_0)t}\inf_{x\in \Sigma}K^{\al}(x,0) $ where $C(t_0)$ is some constant for $0<t_0<T_0 $ 
\item There is a unique viscosity solution $\Sigma_t$.
\end{enumerate}
\end{lem}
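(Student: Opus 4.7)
For part (i), the plan is to apply the parabolic maximum principle to an auxiliary quantity that pairs $K^{\al}$ with a support function, in the spirit of Tso's classical bound for the Gauss curvature flow. I would place the origin $X_0$ in the interior of the convex body bounded by $\Sigma_t$, just above the flat disc $D_{\rho_0}$, so that the support function $u=\langle X-X_0,\nu\rangle$ is uniformly bounded below by a positive multiple of $\rho_0$ on $\Sigma_t$ for all $t\in[0,T_0)$. Then I would form $Z = K^{\al}/(u-c)$ for an appropriate $c<\inf_\Sigma u$ and derive its evolution equation from Lemma~\ref{ev-h}(v) together with the standard evolution of $u$, which has the form $u_t=-K^{\al}$ plus first-order gradient terms. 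At a spatial maximum the gradient contribution vanishes and $\al\,\Box Z \leq 0$, so the surviving reaction terms force the algebraic bound $Z \leq \big((2\al+1)/(2\al \rho_0)\big)^{2\al}$ whenever the max is not attained at $t=0$; combining with the initial value gives the claimed supremum estimate.

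For part (ii), I would apply the minimum principle directly to $K^{\al}$ via Lemma~\ref{ev-h}(v), i.e. $\partial_t K^{\al} = \al\,\Box K^{\al} + \al K^{2\al} H$. At a spatial minimum where $K^{\al}>0$ we have $\Box K^{\al} \geq 0$ because the coefficient matrix $K^{\al}(h^{-1})^{kl}$ is positive definite on the strictly convex region, and $H \geq 0$ by convexity, so $\inf_\Sigma K^{\al}(\cdot,t)$ is non-decreasing in $t$. To upgrade this to the stated exponential lower bound on $[0,t_0]$ with $t_0<T_0$, I would divide by $K^{\al}$ at the minimum, use the uniform Lipschitz bound of Andrews together with the upper bound from part (i) to control $K^{\al}H$, and close with a Gronwall argument on $\log(\inf K^{\al})$.

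For part (iii), I would invoke the standard Crandall--Ishii--Lions comparison principle for viscosity sub- and supersolutions of the degenerate parabolic equation \eqref{eqn-f}. The nonlinearity is non-decreasing in the Hessian on the cone of semipositive-definite matrices and locally Lipschitz in the gradient once we restrict to the class of convex graphs, so the standard doubling-of-variables argument applies and gives uniqueness for the initial value problem; geometric uniqueness of the moving surface $\Sigma_t$ then follows by translating back.

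The hardest step will be part (i): producing a bound of the form $C_\al/\rho_0^{2\al}$ by the $Z$-max technique is essentially routine, but matching the specific constant $((2\al+1)/(2\al))^{2\al}$ requires calibrating the shift $c$ precisely against the reaction term $\al K^{2\al}H$, effectively tracking equality with the self-similar spherical barrier of radius $\rho_0$ whose $\al$-Gauss curvature flow saturates the estimate.
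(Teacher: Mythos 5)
The paper does not prove Lemma~\ref{lem-uK}; it imports all three statements from \cite{A2} and \cite{KL} and builds on them, so there is no in-text proof for your proposal to match. Judged against the cited sources and against the paper's own Tso-type argument for the mean curvature bound in Lemma~\ref{lem-uH} (which uses the analogous auxiliary quantity $P=H/(\psi+4R^2-|x|^2)$), your sketch is on the right track but has two soft spots worth naming.

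For part~(ii) you have overcomplicated the argument. From Lemma~\ref{ev-h}(v), $\partial_t K^{\al}=\al\,\Box K^{\al}+\al K^{2\al}H$, and at a spatial minimum of $K^{\al}$ one has $\Box K^{\al}\geq 0$ (positive-definite coefficient matrix, nonnegative-definite Hessian of $K^\al$) and $K^{2\al}H\geq 0$ by convexity. This already gives that $\inf_\Sigma K^{\al}(\cdot,t)$ is nondecreasing in $t$ on any interval where the solution is smooth and strictly convex, which is at least as strong as the stated exponential lower bound; no division by $K^{\al}$, no appeal to the Lipschitz bound, and no Gronwall step are needed. What does require care, and what you do not address, is that for a convex initial surface with a flat spot the quantity $K^{\al}$ is not smooth and the minimum principle cannot be applied directly on $\Sigma_t$; one must either work on strictly convex approximants $\Sigma_{t,\e}$ and pass to the limit, or restrict to a compact subset of the strictly convex part (this is also why the constant is allowed to depend on $t_0<T_0$). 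For part~(iii), the phrase ``standard doubling-of-variables'' undersells the difficulty: the Monge--Amp\`ere-type nonlinearity in \eqref{eqn-f} is degenerate on the cone boundary (where $\det D^2 f=0$), so the comparison principle is not the off-the-shelf Crandall--Ishii--Lions statement; the genuine content is in Andrews' treatment for degenerate fully nonlinear parabolic equations of this type. Part~(i) is sound in outline; as you say yourself, producing the specific constant $\bigl((2\al+1)/(2\al\rho_0)\bigr)^{2\al}$ is a calibration of the shift $c$ against the reaction term and against the self-similar spherical barrier, and that computation is where the remaining work lives.
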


\begin{lem} \label{lem-uH}
Set $\psi (x,t)= <x, \nu >$ and let $B_{R_0}(0)$ be a ball of radius $R_0$ about the origin and $ P = \displaystyle \frac{H}{\psi+4R^2-\x }$, where $\Sigma_{0}$ is contained in $B_{R_0}(0)$ and $R^2=\max({R_0}^2, R_0)$. 
Then there exists a constant $C=C(\sup_{x\in \Sigma ,\ 0\leq t< T_0 } K^{\alpha}, R)>0$ for $\frac{1}{2} < \alpha \leq 1 $ such that
\begin{align*}
   \sup_{x\in \Sigma ,\ 0\leq t< T_0 } H(x,t)\leq C .
\end{align*}
\end{lem}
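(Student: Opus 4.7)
The plan is to apply the parabolic maximum principle to the auxiliary function $P := H/\phi$ with $\phi := \psi + 4R^2 - |X|^2$ and $\psi := \langle X,\nu\rangle$ the support function. First I would verify that $\phi$ is uniformly comparable to a positive constant. Since the flow is inward-contracting, $\Sigma_t \subset B_{R_0}$ for every $t$, so $|X|^2 \leq R_0^2 \leq R^2$. Condition (II) places the origin inside the convex body enclosed by $\Sigma_t$, so $\psi \geq 0$; together these yield $3R^2 \leq \phi \leq 5R^2$ uniformly in space-time.

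Next I would compute $L := \partial_t - \alpha\Box$ on $H$, $\psi$, and $|X|^2$. Item (vi) of Lemma \ref{ev-h} gives $LH$ directly. Using $\partial_t X = -K^\alpha \nu$ together with the Gauss--Weingarten formulas, a short calculation yields
\begin{equation*}
L\psi = -(1+n\alpha)K^\alpha + \alpha K^\alpha H\psi + (\text{gradient terms in }\nabla h),
\end{equation*}
while for $n=2$ (using $(h^{-1})^{kl} g_{kl} = H/K$)
\begin{equation*}
L|X|^2 = 2(n\alpha - 1)K^\alpha \psi - 2\alpha K^{\alpha-1}H.
\end{equation*}

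At an interior space-time maximum $(x_0,t_0)$ of $P$, the standard chain rule
\begin{equation*}
LP = \frac{LH}{\phi} - \frac{H\,L\phi}{\phi^2} + \frac{2\alpha K^\alpha}{\phi^2}(h^{-1})^{kl}\nabla_k H\,\nabla_l\phi - \frac{2\alpha K^\alpha H}{\phi^3}(h^{-1})^{kl}\nabla_k\phi\,\nabla_l\phi,
\end{equation*}
combined with the maximum condition $\nabla H = (H/\phi)\nabla \phi$, makes the last two terms cancel exactly. The inequality $LP \geq 0$ then reduces to an algebraic inequality in $H$, $K$, $\psi$, and second-order jet data. Two features of the range $\tfrac12 < \alpha \leq 1$ are decisive. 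First, in dimension two the identity $|A|^2 = H^2 - 2K$ reorganizes the zeroth-order part of $LH$ as
\begin{equation*}
\alpha K^\alpha H^2 + (1-2\alpha)K^\alpha|A|^2 = (1-\alpha)K^\alpha H^2 + 2(2\alpha - 1)K^{\alpha+1},
\end{equation*}
reducing the coefficient of the dangerous $H^2$ term from $\alpha$ to $1-\alpha$. Second, $\alpha \leq 1$ together with $\nabla_i K = K(h^{-1})^{mn}\nabla_i h_{mn}$ and Cauchy--Schwarz allows the dangerous $\alpha^2 K^{\alpha-2}|\nabla K|^2$ in $LH$ to be absorbed by the favorable $-\alpha K^\alpha g^{ij}(h^{-1})^{km}(h^{-1})^{nl}\nabla_i h_{mn}\nabla_j h_{kl}$, so these gradient contributions combine non-positively.

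Finally, the contribution $-2\alpha K^{\alpha-1}H^2/\phi^2$ coming from $-2\alpha K^{\alpha-1}H$ in $L|X|^2$ (inside $-HL\phi/\phi^2$), together with $-\alpha K^\alpha H^2\psi/\phi^2$ and bounded lower-order pieces, combines with the $L^\infty$-bound $K^\alpha \leq C_0$ from Lemma \ref{lem-uK}(i) and the uniform bounds on $\phi$ to force $H(x_0, t_0) \leq C(\sup K^\alpha, R)$. The main obstacle is the careful bookkeeping of the numerous cross-terms in $L\psi$, $L|X|^2$, and the gradient contributions of $L\phi$; one must check with the correct signs that, at the maximum, only the favorable algebraic inequality survives, which is precisely what singles out the range $\tfrac12 < \alpha \leq 1$.
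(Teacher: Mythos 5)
Your overall strategy (maximum principle applied to $P=H/\phi$ with $\phi=\psi+4R^2-|X|^2$, the $|A|^2=H^2-2K$ identity, and the cancellation of the chain-rule gradient terms at the maximum) matches the paper's. However there are two genuine gaps in the execution, precisely where the argument is most delicate.

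First, the gradient terms in $LH$ do \emph{not} combine non-positively by Cauchy--Schwarz in the range $\frac12<\alpha\leq 1$. Writing $u=\nabla_1 h_{11}/\lambda_1$, $v=\nabla_1 h_{22}/\lambda_2$ in principal coordinates and using Codazzi, the quantity $\alpha\,\bigl|\nabla K/K\bigr|^2 - g^{ij}(h^{-1})^{km}(h^{-1})^{nl}\nabla_i h_{mn}\nabla_j h_{kl}$ contains a quadratic form in $(u,v)$ whose discriminant is $(1-2\alpha)+2(1-\alpha)\lambda_2/\lambda_1$. This is negative (i.e., the form changes sign) whenever $\lambda_2/\lambda_1 < (2\alpha-1)/(2-2\alpha)$ — in particular near the flat spot, where one principal curvature degenerates. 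So the ``dangerous $\alpha^2K^{\alpha-2}|\nabla K|^2$'' cannot be absorbed by the $-\alpha K^\alpha(h^{-1})(h^{-1})\nabla h\nabla h$ term alone; your sign claim fails exactly in the regime this lemma must cover. The paper instead substitutes the critical-point relation $\nabla H = P\,\nabla\phi$ into the gradient terms, converting them into a $P^2$ contribution proportional to $K^{\alpha-1}(|X|^2-\psi^2)$, which is then controlled alongside the zeroth-order terms.

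Second, and more fundamentally, even with the gradient terms rewritten, the coefficient of $P^2$ that results — schematically $(1-2\alpha)K^\alpha\psi+(1-\alpha)K^\alpha(4R^2-|X|^2)-2\alpha K^{\alpha-1}+(\text{gradient }P^2\text{ piece})$ — is \emph{not} automatically negative for a generic initial surface: the positive piece $(1-\alpha)K^\alpha(4R^2-|X|^2)$ can dominate when $K$ is moderate and $\psi$ is small. Your proposal asserts that the negative contributions ``force $H(x_0,t_0)\leq C$,'' but gives no mechanism that makes the net coefficient negative. The paper's resolution is a rescaling argument: replace $\Sigma_0$ by $\eta\Sigma_0$ with $\eta$ small, so that (via Lemma~\ref{lem-uK}) $K\gtrsim \eta^{-2}$ while $|X|^2,R^2\lesssim\eta^2$ and $\psi\lesssim\eta$. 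Under this scaling the favorable term $(1-2\alpha)K^\alpha\psi\sim -\eta^{1-2\alpha}$ (note $1-2\alpha<0$) blows up, while the unfavorable terms are only $O(\eta^{2-2\alpha})$, yielding $\partial_t P \leq -\tfrac12 P^2 + C$. Without this scaling step, or an alternative mechanism of comparable strength, the maximum-principle argument does not close; this is the essential idea you are missing.
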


\begin{proof}
Since $|x|$ is decreasing, $\psi+4R^2-\x $ is positive and then we have 
\begin{equation*}
\begin{split}
\frac{\partial}{\partial t} \x &=\Box \x + 2K^{\al} \langle x,\nu\rangle - 2K^{\al }(h^{-1})^{kl}g_{kl}.
\end{split}
\end{equation*}
By using $\Di P =0$ at the maximum point, we can obtain
\begin{align*}
\al \Box P = \frac{\al \Box H }{\psi+4R^2-\x} + \frac{\al H\Box \x}{(\psi+4R^2-\x )^2}-\frac{\al H \Box \psi }{(\psi+4R^2-\x )^2}
\end{align*}
and then since $\Di\Dj P \leq 0$ at the maximum point, we get
\begin{equation}\label{third}
\begin{split}
\frac{\partial}{\partial t} P &\leq \frac{(1-\al) H\Box \x}{(\psi+4R^2-\x )^2} +\frac{H \big( (2\al +1)K^{\al} + 2K^{\al } \psi \big)  }{( \psi+4R^2-\x )^2} - \frac{ H^2 (\al K^{\al} \psi +2K^{\al -1}) }{(\psi+4R^2-\x )^2} \\
&\quad + \frac{\al K ^{\al}}{\psi+4R^2-\x}\big(\al g^{ij}(h^{-1})^{kl}(h^{-1})^{mn}\D_ih_{kl}\D_jh_{mn} - g^{ij}(h^{-1})^{km}(h^{-1})^{nl}\D_ih_{mn}\D_jh_{kl} \big) \\
& \quad + \frac{1}{\psi+4R^2-\x }\big(\al K ^{\al } H^2 + (1-2\al )K^{\al } \A  \big)
\end{split}
\end{equation}
at the maximum point.
Now, we can estimate the third term of (\ref{third}) by the following inequality
\begin{align*} 
&\al g^{ij}(h^{-1})^{kl}(h^{-1})^{mn}\D_ih_{kl}\D_jh_{mn} - g^{ij}(h^{-1})^{km}(h^{-1})^{nl}\D_ih_{mn}\D_jh_{kl}\\
&\quad = (\al -1)\Bigg{[} \big((h^{-1})^{11}\D_1 h_{11} + (h^{-1})^{22}\D_1 h_{22}\big)^2 + \big((h^{-1})^{11}\D_2 h_{11} + (h^{-1})^{22}\D_2 h_{22}\big)^2   \Bigg{]} \\
&\quad \quad +2 (h^{-1})^{11}(h^{-1})^{22}\big{\{} \D_1 h_{11} \D_1 h_{22} +\D_2 h_{11} \D_2 h_{22} \big{\}}  -2 (h^{-1})^{11}(h^{-1})^{22}\big{\{} (\D_2 h_{11})^2 +(\D_1 h_{22})^2 \big{\}} \\
&\quad \leq 2 (h^{-1})^{11}(h^{-1})^{22}\big{\{} -(\D_1 h_{22} )^2 -P\D_1 h_{22}(\D_1 \x -\D_1 \psi )-(\D_2 h_{11})^2 -P\D_2 h_{11}(\D_2 \x -\D_2 \psi) \big{\}}\\
&\quad \leq  2 (h^{-1})^{11}(h^{-1})^{22}\Big(1-\frac{\tilde{h}}{2}\Big)^2 \Big(\x -\langle x ,\nu\rangle ^2 \Big) P^2
\end{align*}

where $\tilde{h} = \min{\{} {h_{11}, h_{22}}{\}}$. 
So
\begin{equation*}
\begin{split}
\frac{\partial}{\partial t} P \leq &\Bigg{[} \frac{2\al K ^{\al -1}\Big(1-\frac{\tilde{h}}{2}\Big)^2} {\psi+4R^2-\x}(\x -\psi ^2 ) + K ^{\al }\big{\{}4(1-\al)R^2-(1-\al)\x\big{\}}+(1-2\al)K ^{\al }\psi -2K^{\al -1}  \Bigg{]}P^2 \\
&+ \frac{1}{\psi+4R^2-\x }\Bigg[ (1-\al) \Box \x +\big{\{}(2\psi +(2\al+1)\big{\}}K^{\al } \Bigg{]}P + \frac{2(2\al-1) K^{\al+1 }}{\psi+4R^2-\x} .
\end{split}
\end{equation*}
For $\displaystyle\frac{1}{2} < \al \leq 1 $, we can make the coefficient of $P^2$ be negative, which can be achieved if we consider $\eta$ small enough.  The reason is if we begin with $\eta \Sigma_0$ for any given $\Sigma_0$, we can make $K \geq \displaystyle \frac{C_0}{\eta ^2}$ where $C_0$ is some constant
 depending on initial surface, which comes from Lemma \ref{lem-uK}, and $|x|^2 \leq \eta ^2$, $R^2 \leq \eta ^2$, and $\psi \leq \eta $ for sufficiently small $\eta$. Then the first term and second term of coefficient of $P^2$ are $O(\eta ^{2- 2\al})$ and the third term is negative with $K ^{\al }\psi = O(\eta ^{1- 2\al})$ for $\eta $ small enough. This implies $\displaystyle\frac{\partial P }{\partial t} \leq - \frac{1}{2} P^2 + C$ where $C=C(\sup_{x\in \Sigma ,\ 0\leq t< T_0 } K^{\alpha}, R)$
and then if $ \displaystyle -\frac{1}{2} P^2 + C < 0 $, it is contradiction. So 
$P$ is bounded and hence H is bounded before $\Sigma$ shrinks a point.
\end{proof}

\subsection{Strictly convexity away from the flat spot}
To apply Harnack principle, let us introduce new coordinate defined on the sphere $S^n$. If $\Sigma_t$ is strictly convex, $\nu(x,t)$ is a one-to-one map from $\Sigma_t$ to $S^n$, which means for each $z\in S^n$, there is $X(x,t)=\nu^{-1}(z,t)$. $K(z,t)$ denotes Gauss Curvature $K$ at $\nu^{-1}(z,t)$. If  $\Sigma_t$ is  convex, we still use the same coordinate $(z,t)$ for strictly convex part $\Sigma_t^2$ by using approximates with strictly convex surfaces.
\begin{lem}  \label{lem-lK} Assume that the flat spot, $\Sigma_0^1$, is a part of the plane orthogonal to $e_{n+1}$. For any $\eta>0$, there is a constant $c_{\eta}>0$ such that 
\begin{align*}
K^{\al}(z,t) \geq c_{\eta}
\end{align*}
for $z\in \cS_{\eta}:=\{z\in S^{n}\,\text{and}\,\|z+e_{n+1}\|\geq \eta>0\}$.
\end{lem}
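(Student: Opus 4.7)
The plan is to propagate a positive lower bound for $K^{\alpha}$ from a well-chosen reference point via a parabolic Harnack inequality.

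First I would approximate $\Sigma_{0}$ by smooth strictly convex surfaces $\Sigma_{0}^{\epsilon} \to \Sigma_{0}$; the associated flows $\Sigma_{t}^{\epsilon}$ are smooth and strictly convex on $[0, T_{0}^{\epsilon})$ and converge to $\Sigma_{t}$ uniformly on compact subsets of $[0, T_{0})$ by uniqueness of the viscosity solution (Lemma \ref{lem-uK}(iii)). It thus suffices to establish the bound on $\Sigma_{t}^{\epsilon}$ uniformly in $\epsilon$.

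Second, for each $t$ I would produce a reference point $z^{*}(t) \in \cS_{\eta/2}$ at which $K^{\alpha}$ is bounded below independently of $\epsilon$. By Gauss--Bonnet, $\int_{\Sigma_{t}^{\epsilon}} K \, dA = 4\pi$, and the Gauss map change of variables gives $\int_{S^{n}} K^{-1}(z, t) \, dA_{S^{n}} = \mathrm{Area}(\Sigma_{t}^{\epsilon}) \leq A_{0}$, where $A_{0}$ depends only on the diameter of $\Sigma_{0}$. Cauchy--Schwarz then yields
\begin{equation*}
\int_{\cS_{\eta/2}} K(z,t)\, dA_{S^{n}}(z) \;\geq\; \frac{|\cS_{\eta/2}|^{2}}{A_{0}} \;\geq\; c_{0}(\eta),
\end{equation*}
so some $z^{*}(t) \in \cS_{\eta/2}$ satisfies $K^{\alpha}(z^{*}(t), t) \geq c_{1}(\eta) > 0$.

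Third, I would apply a parabolic Harnack inequality to propagate this bound to all of $\cS_{\eta}$. By Lemma \ref{ev-h}(v), on the strictly convex $\Sigma_{t}^{\epsilon}$ the quantity $K^{\alpha}$ satisfies
\begin{equation*}
\partial_{t} K^{\alpha} = \alpha\, \Box K^{\alpha} + \alpha K^{2\alpha} H,
\end{equation*}
which is uniformly parabolic in intrinsic coordinates on the subset of $\Sigma_{t}^{\epsilon}$ with Gauss image in $\cS_{\eta/4}$. Combined with the uniform upper bound on $H$ from Lemma \ref{lem-uH} and the $C^{1,1}$ bound of Theorem \ref{thm-1}, a Chow-type Harnack inequality yields
\begin{equation*}
K^{\alpha}(z, t) \;\geq\; C(\eta, \tau)\, K^{\alpha}\bigl(z^{*}(t-\tau),\, t-\tau\bigr) \;\geq\; c_{\eta}
\end{equation*}
for all $z \in \cS_{\eta}$ and a suitable $\tau > 0$. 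Letting $\epsilon \to 0$ completes the proof.

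The main obstacle is keeping the Harnack constant uniform in $\epsilon$ as the approximate flows degenerate near the flat spot. This is handled by restricting the Harnack estimate to the region whose Gauss image lies in $\cS_{\eta/4}$, well separated from $-e_{n+1}$; on that region the geometric quantities entering the Harnack constant (curvatures, diameter, intrinsic distance to $z^*$) are controlled independently of the approximation by Lemmas \ref{lem-uK} and \ref{lem-uH}.
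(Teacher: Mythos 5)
Your overall strategy (seed a positive lower bound for $K^{\alpha}$ somewhere and propagate it by a Harnack inequality) is the same as the paper's, which applies Chow's differential Harnack estimate from \cite{C3}. Where you genuinely differ is in the seeding: the paper uses the short time regularity to conclude that at a fixed early time $\delta_0$ the whole region with Gauss image in $\cS_\eta$ lies in the strictly convex part, so $\inf_{\cS_\eta}K^{\alpha}(\cdot,\delta_0)>0$, and then only needs the Harnack with $z_1=z_2$ (constant path, $\Theta=0$) to push this forward in time. Your integral-geometric seeding (Gauss-map change of variables, $\int_{S^2}K^{-1}\,dA_{S^2}=\mathrm{Area}(\Sigma_t^\epsilon)\le A_0$, Cauchy--Schwarz) is correct and attractive because it avoids leaning on the short-time theorem; but it only produces one good point $z^*(t-\tau)$ per time slice, so you are forced to propagate \emph{in space}, and that is where your argument has a genuine gap.

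The gap is a circularity in your justification of the spatial Harnack. You assert that $\partial_tK^{\alpha}=\alpha\Box K^{\alpha}+\alpha K^{2\alpha}H$ with $\Box=K^{\alpha}(h^{-1})^{kl}\nabla_k\nabla_l$ is ``uniformly parabolic'' on the set with Gauss image in $\cS_{\eta/4}$. For $n=2$ the eigenvalues of $K^{\alpha}(h^{-1})$ are $K^{\alpha}/\lambda_i$, so a uniform lower ellipticity bound there is equivalent (given $H\le C$) to a uniform lower bound on $K$ on that set --- which is exactly the conclusion of the lemma. Restricting to $\cS_{\eta/4}$ does not help: the Gauss image being far from $-e_{n+1}$ does not by itself prevent $K$ from degenerating at such points for a merely convex initial surface. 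So a Krylov--Safonov type Harnack for this equation is not available a priori. The step can be repaired by invoking Chow's (or Andrews's) Li--Yau--Hamilton Harnack in its integrated form, exactly as quoted in the paper: there the constant is $e^{-\Theta/4}(t_2/t_1)^{-\beta}$ with $\Theta=\inf_\gamma\int|d_t\gamma|^2_{m(t)}\,dt$, and the metric $m(t)$ arising from completing the square is (equivalent to) the second fundamental form, so $\Theta\le C\,\mathrm{diam}^2/\tau$ is controlled by the \emph{upper} bound on $H$ from Lemma \ref{lem-uH} alone, with no lower curvature bound needed along the path. With that substitution your argument closes (at the cost, shared with the paper, that the constant degenerates as $t\to0$ since one must take $t_1>0$); as written, the uniform-parabolicity claim does not.
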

\begin{proof}
 We can immediately obtain the result from the Harnack estimate in \cite{C3}:\\
 For any points $z_1,z_2\in\cS_{\eta}$ and times $0\leq t_1<t_2$
         $$\frac{K^{\alpha}(z_2,t_2)}{K^{\alpha}(z_1,t_1)}\geq
         e^{-\Theta/4}\left(
           \frac{t_2}{t_1}\right)^{-(1+(2\alpha)^{-1})^{-1}}$$
         where
         $\Theta=\Theta(z_1,z_2,t_1,t_2)=\inf_{\gamma}\int_{t_1}^{t_2}|d_t\gamma
         (t)|^2_{m(t)}dt$
         and the infimum is taken over all paths $\gamma$ in $\Sigma$
         whose graph $(\gamma(t),t)$ joins $(z_1,t_1)$ to $(z_2,t_2)$. The short time existence of smooth surfaces implies that,  for  $z\in \cS_{\eta}$, $X(z,t)$ is the strictly convex part, $\Sigma_t^2$, for $0\leq t\leq\delta_0$ for some $\delta_0>0$. Therefore we can take $0<\delta_0\leq t_1<t_2\leq T$, which implies $K^{\alpha}(z_2,t_2)\geq c_1K^{\alpha}(z_1,\delta_0)\geq c_{\eta}$ for some $c_1, c_{\eta}>0$ and then the conclusion.
\end{proof}

We finally know \eqref{eqn-f} is uniformly parabolic, which comes from Lemmas \ref{lem-uK}-\ref{lem-uH}. And then we can show that 
$\Sigma _t$ is  $C^{\infty}$ on the point being away from flat spot.
\begin{corollary} Under the same condition of Lemma \ref{lem-lK},
$\tilde{\Sigma}_t^2:=\{X(z,t)\in \Sigma_t^2: z\in\cS_{2\eta}\}$ is smooth.
\end{corollary}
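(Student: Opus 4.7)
The plan is to convert the curvature bounds of Lemmas \ref{lem-uK}, \ref{lem-uH} and \ref{lem-lK} into uniform two-sided control on the principal curvatures in a neighborhood of $\tilde\Sigma_t^2$, so that \eqref{eqn-f} becomes uniformly parabolic there, and then to bootstrap using standard fully nonlinear parabolic theory.

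First I would combine the three lemmas: globally on $\Sigma_t$ one has $H \leq C$ and $K^\alpha \leq C$, while on $\cS_\eta$ Lemma \ref{lem-lK} supplies the lower bound $K^\alpha \geq c_\eta > 0$. Since $n=2$, the two principal curvatures $\lambda_1,\lambda_2$ satisfy $\lambda_i \leq H \leq C$, and hence $\lambda_i = K/\lambda_{3-i} \geq K/H \geq c'_\eta > 0$. Consequently the second fundamental form is uniformly bounded above and uniformly nondegenerate from below on the open set $V_\eta := \{X(z,t) : z \in \cS_\eta\}$, which compactly contains $\tilde\Sigma_t^2 = \{X(z,t) : z \in \cS_{2\eta}\}$.

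Second, I would pass to graph coordinates. Since the unit normal $\nu$ stays in $\cS_\eta$, a neighborhood of each point of $\tilde\Sigma_t^2$ can be written as a graph $x_{n+1} = f(x,t)$ solving \eqref{eqn-f} with $|\nabla f|$ bounded and with the eigenvalues of $D^2 f$ controlled uniformly from above and below, by the two-sided curvature bounds of the previous step. Thus the right-hand side of \eqref{eqn-f} is a uniformly parabolic fully nonlinear operator on the range of $(D^2 f, \nabla f)$ in play.

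Third, I would invoke standard fully nonlinear parabolic regularity. Krylov--Safonov estimates together with the Evans--Krylov theorem, applied either directly or after the monotone transformation $F \mapsto F^{1/\alpha}$ that converts $[\det(D^2 f)]^\alpha$ into $\det(D^2 f)$ (whose $n$-th root is concave on symmetric positive matrices), provide interior $C^{2,\gamma}$ estimates for $f$. Differentiating \eqref{eqn-f} and iterating linear Schauder theory then yields interior $C^{k,\gamma}$ estimates for every $k$, so $f$ is $C^\infty$ on a slightly smaller spacetime neighborhood; transferring back via the parametrization shows that $\tilde\Sigma_t^2$ is smooth. The substantive step is really the first one, namely packaging the scalar estimates of Lemmas \ref{lem-uK}--\ref{lem-lK} into two-sided eigenvalue bounds for the Weingarten map; once that uniform parabolicity is in hand, the regularity bootstrap is essentially black-box.
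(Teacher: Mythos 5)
Your proposal is correct, and its core step is identical to the paper's: you combine the global bounds $\lambda_i\leq H\leq C$ from Lemmas \ref{lem-uK}--\ref{lem-uH} with the local lower bound $K\geq c_\eta$ from Lemma \ref{lem-lK} to get two-sided control $C_1\geq\lambda_i\geq c_2/C_1^{n-1}>0$ on the principal curvatures, which is exactly the paper's argument for uniform parabolicity. The only divergence is the choice of parametrization for the bootstrap. The paper works with the support function $S(z,t)$ in the Gauss-map coordinate, which satisfies $S_t=-\bigl[\det(\nabla^2S+S\bar g)\bigr]^{-\alpha}$; since $M\mapsto-(\det M)^{-\alpha}$ is concave on positive definite matrices for every $\alpha>0$, Evans--Krylov and Schauder apply directly. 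You instead write the surface locally as a graph solving \eqref{eqn-f}, where $[\det(D^2f)]^{\alpha}$ is neither concave nor convex for $\tfrac12<\alpha\leq 1$ in dimension $n=2$, so you correctly note that one must first pass to the equivalent concave equation $f_t^{1/\alpha}(1+|\nabla f|^2)^{\cdots}=\det(D^2f)$ before invoking Evans--Krylov. Both routes work; the support function buys automatic concavity of the operator (and a global coordinate on $\cS_{2\eta}$), while the graph formulation is more elementary but requires your extra transformation and a check that the gradient stays bounded, which holds since the normal is confined to $\cS_\eta$.
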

\begin{proof}
Let $\lambda_i$ be the eigenvalues of $(h^i_j)$. From the convexity,   $\lambda_i\geq 0$. And from the upper bound of Mean
Curvature and the lower bound of Gauss Curvature,  $\lambda_1+\cdots+\lambda_n<C_1$ and $K=\lambda_1\cdots\lambda_n>c_2$. Now we have
$$C_1 \geq \lambda_i\geq \frac{c_2}{\Pi_{j\neq i}\lambda_j}\geq \frac{c_2}{C_1^{n-1}}>0.$$
It implies there are $0<\lambda\leq\Lambda<\infty$ such that
$$\lambda |\xi|^2\leq K(h^{-1})^{ij}\xi_i\xi_j\leq\Lambda |\xi|^2$$
and the support function $S(z,t)$ satisfies a uniformly parabolic equation in $\tilde{\Sigma}_t^2$. Therefore  $S(z,t)$ is $C^{2,\gamma}$ and then $C^{\infty}$ in $\tilde{\Sigma}_t^2$ through the standard bootstrap argument using the Schauder theory.
\end{proof}

\subsection{Proof of Theorem \ref{thm-1}}
Recall that $|A|^2$ is the square sum of principle curvatures of a given surface.
First, we approximate the initial surface $\Sigma_{0}$ with strictly convex smooth functions, $\Sigma_{0,\e}$ whose $|A_{0,\e}|^2$ is uniformly bounded by $2|A_0|^2$ of $\Sigma_{0}$. Then  there are smooth solutions  $\Sigma_{t,\e}$  of \eqref{eqn-1.0}, \cite{KL}, and $|A_{0,\e}|^2\leq 2H_{\e}^2<4|A_0|^2<C$ uniformly. As $\e\ra 0$, $\Sigma_{t,\e}$ converges to a viscosity solution $\Sigma_{t}$ as \cite{A1}.
$|A_t|^2$ of $\Sigma_{t}$ will be uniformly bounded, which implies that $\Sigma_{t}$ is $C^{1,1}$. And for any $X\in \Sigma^2_{t}$, there is a small $\eta>0$ such that $\|\nu_{X}+e_{n+1}\|\geq \eta>0$ and then $X\in  \tilde{\Sigma}^2_{t}$.
Since $ \tilde{\Sigma}^2_{t}$ is smooth at $X$, so is $\Sigma^2_{t}$.

\subsection{A Waiting Time Effect}
We now are going to show the flat spot of the convex surface will persist for some time.
\begin{lem}\label{lem-waiting-time} Let $\Sigma_0$ be convex. For $\frac{1}{2}<\alpha \leq 1$, there is a waiting time of flat spot: if $P_0\in int_n(\Sigma_0\cap\Pi)$ where $\Pi$ is a $n$-dimensional plane and $int_n(A)$ is the interior of $A$ with respect to the topology in $\Pi$, there is $t_0>0$ such that
 $P_0\in int_n(\Sigma_t\cap\Pi)$ for $0<t<t_0$.
\end{lem}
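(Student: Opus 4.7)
The plan is to construct an explicit radial supersolution of the graph equation \eqref{eqn-f} whose zero set is a slowly shrinking disc around (the projection of) $P_0$, and then invoke comparison. After a rigid motion assume $P_0 = 0$, $\Pi = \{x_3 = 0\}$, and $\Sigma_0 \subset \{x_3 \geq 0\}$; this is possible because the $2$-dimensional face $\Sigma_0 \cap \Pi$ forces $\Pi$ to be a supporting plane of $\Sigma_0$ by convexity. Fix $\rho >0$ with $\overline{B_{2\rho}(0)} \subset \Sigma_0 \cap \Pi$, and write the lower part of $\Sigma_t$ as the graph $x_3 = f(x,t) \geq 0$, so $f(\cdot,0) \equiv 0$ on $B_{2\rho}(0) \subset \R^2$ and $f$ satisfies \eqref{eqn-f} with $n=2$. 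By Theorem \ref{thm-1}, $f$ is $C^{1,1}$ in the interior of its graph domain, so on a fixed annular neighborhood $\{2\rho \leq |x| \leq R\}$ of the flat spot one has $f(x,0) \leq L(|x|-2\rho)_+$ and $\sup_{\partial B_R \times [0,T]} f \leq M$ for finite constants $L,M$.

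With the critical exponent $\beta = \frac{3\alpha-1}{2\alpha-1} > 1$ singled out in the balance of terms discussion in \S1, consider the radial candidate
$$\bar f(x,t) \;=\; A\bigl(|x| - r(t)\bigr)_+^\beta, \qquad r(t) \;=\; \tfrac{3\rho}{2} - vt,$$
on $B_R(0) \times [0,t_0]$. A direct differentiation using the radial form \eqref{eqn-f1} and the identity $\alpha(2\beta-3) = \beta - 1$ (which defines $\beta$) shows that the $(|x|-r(t))^{\beta-1}$ factor on each side of the equation cancels exactly, so that the supersolution inequality on $\{|x| > r(t)\}$ reduces to
$$v \;\geq\; \frac{A^{2\alpha-1}\,\beta^{2\alpha-1}\,(\beta-1)^\alpha}{|x|^\alpha\,\bigl(1 + \bar f_r^{\,2}\bigr)^{(4\alpha-1)/2}}.$$
Since $\alpha > \tfrac12$, both factors in the denominator are minimized at $|x| = r(t)$ (where $\bar f_r = 0$), so choosing
$$v \;:=\; \frac{A^{2\alpha-1}\,\beta^{2\alpha-1}\,(\beta-1)^\alpha}{(\rho/2)^\alpha}, \qquad t_0 \;:=\; \rho/v,$$
ensures $r(t) \geq \rho/2$ on $[0,t_0]$ and makes $\bar f$ a classical supersolution of \eqref{eqn-f} in $\{|x| > r(t)\}$.

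It remains to fix $A$ so that $\bar f \geq f$ on the parabolic boundary of $B_R(0) \times [0,t_0]$: on the initial annulus $\{2\rho \leq |x| \leq R\}$ the inequality $A(|x| - \tfrac{3\rho}{2})^\beta \geq L(|x| - 2\rho)$ holds once $A\beta (\rho/2)^{\beta-1} \geq L$ (which forces the difference to be increasing in $|x|$ and non-negative at $|x|=2\rho$), and on the lateral face one needs $A(R - \tfrac{3\rho}{2})^\beta \geq M$; both are finite constraints. The viscosity comparison principle for the degenerate Monge--Amp\`ere-type equation \eqref{eqn-f} then gives $f \leq \bar f$ on $B_R \times [0,t_0]$; since $\bar f \equiv 0$ on $B_{\rho/2}(0) \times [0,t_0]$, we conclude $f \equiv 0$ there, hence $B_{\rho/2}(0) \subset \Sigma_t \cap \Pi$, so $P_0 = 0$ lies in the relative $n$-dimensional interior of $\Sigma_t \cap \Pi$ for $0 < t < t_0$. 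The main technical obstacle is the viscosity-level verification of the supersolution property across the free boundary $|x| = r(t)$, where $\bar f \in C^{1,\beta-1}$ fails to be $C^2$; this can be handled by a small smoothing of $\bar f$, by replacing $\Sigma_0$ by the strictly convex smooth approximations used in the proof of Theorem \ref{thm-1}, or by passing to the pressure $g = (\beta f)^{1/\beta}$, whose Lipschitz regularity at the free boundary is compatible with the linearized operator in \eqref{eqn-hl}.
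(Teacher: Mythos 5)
Your proof is correct in outline and rests on the same basic strategy as the paper's (an explicit radial supersolution plus the comparison principle), but the barrier itself is genuinely different. The paper takes $h^+ = C_+\,|X-P_0|^{\mu}\,(T-t)^{-\gamma}$ with $\mu = \frac{4\alpha}{2\alpha-1}$ and $\gamma = \frac{1}{2\alpha-1}$: a barrier whose zero set is the single point $P_0$ for all time but which blows up as $t \to T$, so that the bounded solution $f$ (controlled via the $C^{1,1}$ estimate, which bounds $f_t$) stays below it on the parabolic boundary of a small cylinder; the conclusion is $f(P_0,t)=0$, and the interior statement follows by running the argument at every point of a small ball. Your barrier $A(|x|-r(t))_+^{\beta}$ with the critical profile exponent $\beta = \frac{3\alpha-1}{2\alpha-1}$ and a linearly shrinking flat disc instead encodes the nondegenerate free-boundary speed from Section 1; the exponent balance $\alpha(2\beta-3)=\beta-1$ and the choice of $v$ check out, and this buys you the interior conclusion (a whole persisting disc $B_{\rho/2}$) in one stroke, as well as the sharp vanishing profile at the interface. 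The price is the regularity of the barrier across its own free boundary: for $\frac12<\alpha<1$ one has $\beta>2$ so $\bar f$ is $C^2$ there and the issue you flag is vacuous, while at $\alpha=1$ ($\beta=2$) the barrier is only $C^{1,1}$ and one of the remedies you list (smoothing, or comparing against the smooth strictly convex approximants $\Sigma_{t,\e}$) is genuinely needed; the paper's barrier, being of order $|X-P_0|^{\mu}$ with $\mu\geq 4$, avoids this entirely. Both arguments ultimately lean on the same unproved-in-the-paper comparison principle for the degenerate equation \eqref{eqn-f}, so neither is more rigorous than the other on that point.
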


\begin{proof}
Let $h^+=C_+\frac{|X-P_0|^{\mu}}{(T-t)^{\gamma}}$ for
$\mu=\frac{4\alpha }{2\alpha-1}$, $\gamma=\frac{1}{2\alpha-1}$, and
$C_+=\Big(\frac{\gamma}{\mu^{2\alpha}(\mu-1)^{\alpha}}\Big)^{\frac{1}{2\alpha
    -1}}$. Then $h^+$ is a super-solution of \eqref{eqn-f}.
Now we are going to compare the solution $f$ with $h^+$. From
$C^{1,1}$-estimate of $f$, $f_t$ is bounded and then there is a  ball
$B_{\rho_0}(P_0)\subset int_n(\Sigma_0\cap\Pi)$ and $t_0>0$ such that
$ f(X,t)\leq h^+(X,t)$ on $\partial B_{\rho_0}(P_0)$ for $0\leq t\leq
t_0$ and $f(X,0)\leq h^+(X,0)$. From the comparison principle, we have  $ f(X,t)\leq h^+(X,t)$
for $(X,t)\in B_{\rho_0}(P_0)\times [0,t_0)$, which implies $
f(P_0,t)=0$ and $P_0\in  \Sigma_t$ for $0\leq t\leq t_0$. 
\end{proof}

\section{Optimal Gradient Estimate  near Free Interface}

\subsection{Finite and Non-Degenerate Speed of level sets}\label{sec_3.1}
From using the differential Harnack inequalities, we can show that the free-boundary $\Gamma(t)$
has finite and non-degenerate speed as \cite {DL3}. 
As Theorem \ref{thm-1.3}, we assume that $z=f(x,t)$ is a solution of \eqref{eqn-f} and $C^{1,1}$ on
$\varOmega(t)$ for all $0 < t \leq T$ and $g=(\beta f)^\frac{1}{\beta}$ is smooth up to the interface $\Gamma(t)$ on $0 < t \leq \tau $ for some $\tau < T$. 

Let us consider the function
\begin{equation}\label{eqn-2.1.1}
\begin{split}
f_{\epsilon}(x,t)=\frac{(1 -
A\epsilon)^{(4\alpha-1)/2}\,(1+\epsilon)^{4\al}}{ (1+B
\epsilon)^{2\al-1}}\, f((1+\epsilon)x,(1-A\epsilon)t)
\end{split}
\end{equation}
and then the consequences of \cite {DL3} can be applied to our equation by the similar ways.

We may assume condition \eqref{co1} and let $r=\gamma(\theta,t)$ be the interface $\Gamma(t)$ and $r=\gamma_\e(\theta,t)$ be the $\e$-level set of the function $f$ with $0 \leq \theta < 2\pi$ by expressing in polar coordinates. Then 

\begin{lem}\label{thm-24} 
There exist constants $A,B,C>0$ and $\tilde{A},\tilde{B},\tilde{C}>0$ such that
\begin{equation} \label{eqn-fb1}
e^{-\frac{t-t_0}{B+AT}} \, \gamma(\theta,t_0)\geq \gamma(\theta,t)\geq e^{-\frac{t-t_0}{Ct_0}}\gamma(\theta,t_0)
\end{equation}
and 
\begin{equation} \label{eqn-fb3}
e^{-\frac{t-t_0}{\tilde{B}+\tilde{A}T}} \, \gamma_{\e}(\theta,t_0)\geq  \gamma_\e(\theta,t)\geq e^{-\frac{t-t_0}{\tilde{C} t_0}}\gamma_\e(\theta,t_0)
\end{equation} 
for all $0< t_0\leq t\leq T$, $0 \leq \theta < 2\pi$. In
particular, the free-boundary $r=\gamma(\theta,t)$ and the
$\e$-level set $ r=\gamma_\e(\theta,t)$ of $f$ for each $\e > 0$ move with finite and nondegenerate speed on $0 \leq t \leq T$.
 \end{lem}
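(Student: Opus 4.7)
\medskip

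\noindent\textbf{Proof proposal.} The plan is to follow the Daskalopoulos--Lee strategy of turning the scaling family $f_\e$ in \eqref{eqn-2.1.1} into a one-sided comparison with $f$ and then reading off the speed bounds \eqref{eqn-fb1}--\eqref{eqn-fb3} from the resulting differential Harnack inequality. The argument proceeds in three steps.

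\medskip

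\noindent\emph{Step 1 (sub/supersolution check).} Using that $f$ solves \eqref{eqn-f} with $n=2$, I would substitute the ansatz $f_\e(x,t)=\lambda\,f(\mu x,\tau t)$ with $\lambda=(1-A\e)^{(4\al-1)/2}(1+\e)^{4\al}(1+B\e)^{-(2\al-1)}$, $\mu=1+\e$, $\tau=1-A\e$ into \eqref{eqn-f} and expand to first order in $\e$. The nonlinearity produces a clean factor $(\lambda\mu^{2})^{2\al}$ from $\det D^2f_\e$, while the gradient term $(1+|\D f_\e|^2)^{(4\al-1)/2}$ is only \emph{approximately} scale invariant; the compensating factor $(1+B\e)^{-(2\al-1)}$ in \eqref{eqn-2.1.1} is present precisely so that the residual gradient contribution can be absorbed, provided $B$ is chosen large enough in terms of the $C^{1,1}$ bound on $f$ from Theorem~\ref{thm-1}. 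The constant $A$ is then taken large in $T$ to absorb the remaining $O(\e)$ time contribution. With the correct signs one gets that $f_\e$ is a subsolution of \eqref{eqn-f} for one choice and a supersolution for the dual choice $(\tilde A,\tilde B)$ with the signs of $A,B$ reversed.

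\medskip

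\noindent\emph{Step 2 (boundary comparison on a cylinder).} Fix $t_0\in(0,T]$ and work on the cylinder $\cZ=\{|x|\le R\}\times[t_0,T]$ with $R$ so large that $R>\gamma(\theta,t)$ for every $t\le T$, $\theta$. On the lateral boundary $\partial B_R$ the function $f$ is at least $\max f\ge 2$ by \eqref{maxf}, while $f_\e$ differs from $f$ only by the $O(\e)$ prefactor and the $(1+\e)$ spatial dilation; so for $\e$ small the inequality $f\ge f_\e$ (respectively $\le$) holds on $\partial B_R\times[t_0,T]$ after the usual $A\e$-shift in the initial time. On $t=t_0$ the interior flat disc $D_{\rho_0}\subset\Sigma^1_t$ from \eqref{co1} gives $f\equiv 0$ on an open set, so any ordering on $\{f>0\}$ passes automatically through the flat region. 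By the parabolic comparison principle applied to the degenerate operator (which is legitimate by the $C^{1,1}$-estimate and the smoothness of the strictly convex part), the ordering $f\ge f_\e$ (or $\le$) propagates to all of $\cZ$.

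\medskip

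\noindent\emph{Step 3 (Harnack from the ordering and passage to the limit).} The ordering $f(x,t)\ge f_\e(x,t)=\lambda\, f((1+\e)x,(1-A\e)t)$ applied at a point where $f=\e$ translates to the inclusion $\gamma_\e(\theta,(1-A\e)t)\le(1+\e)^{-1}\gamma_\e(\theta,t)$, and the analogous dual inclusion with $(\tilde A,\tilde B)$ gives the lower bound. Iterating this in $t$ (or equivalently differentiating in $\e$ at $\e=0$ and integrating) produces the exponential bounds \eqref{eqn-fb3}. The estimate \eqref{eqn-fb1} for the free boundary itself then follows by letting $\e\to 0^+$ in \eqref{eqn-fb3}, using monotonicity of $\gamma_\e$ in $\e$ and the fact that $\gamma_\e(\theta,t)\to\gamma(\theta,t)$ as $\e\to 0$ by the definition of the interface together with the non-degeneracy condition~\ref{cond}(I).

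\medskip

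\noindent\emph{Main obstacle.} The delicate part is Step 1: verifying, for the specific exponents in \eqref{eqn-2.1.1}, that the $\e$-expansion of $\partial_t f_\e - [\det D^2 f_\e]^{\al}/(1+|\D f_\e|^2)^{(4\al-1)/2}$ has a definite sign \emph{uniformly} in $[0,T]$ and in a neighborhood of the interface where $|\D f|$ is unbounded. The unboundedness of $|\D f|$ as one approaches $\partial\Omega(t)$ is handled by working in the bounded region $\{f\le\max f\}$ and by the cutoff at radius $R$; the time uniformity forces $A=A(T)$ to grow with $T$, which is why the constants in \eqref{eqn-fb1} depend on $T$ through the factor $B+AT$.
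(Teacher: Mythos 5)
Your proposal follows essentially the same route as the paper: the paper's entire treatment of this lemma consists of introducing the scaled function $f_\e$ in \eqref{eqn-2.1.1} and then invoking the comparison/differential-Harnack machinery of \cite{DL3}, which is exactly your Steps 1--3 (sub/supersolution check for the exponents in \eqref{eqn-2.1.1}, comparison on a region where $|\D f|$ is controlled, and differentiation in $\e$ to obtain the exponential bounds). The details you flag as delicate are precisely the ones the paper defers to \cite{DL3}, so the approach matches.
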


\subsection{Gradient Estimates}\label{sec_3.2}
Throughout this section, we will assume that $g=(\beta f)^{\frac{1}{\beta}}$ is solution of $\eqref{eqn-g}$ and smooth up to the interface on $0 \leq t \leq T$ and also is satisfied with
\begin{equation} \label{eqn-3.1}
\max_{x \in \varOmega(t)}  g(x,t) \geq 2, \qquad \text{for}
\,\, 0\leq t \leq T,
\end{equation}
which comes from \eqref{maxf}.
We now will show that the gradient $|Dg|$ has the bound from above and below.

\begin{lem}[Optimal Gradient estimates]\label{lem-31}
With the same assumptions of Theorem \ref{thm-1.3}  and \eqref{eqn-3.1},
there is a positive constant $C_0 $  such that
$$|Dg| \leq C_0\: , \, \qquad \text{on}\,\,\,   0 \leq g(\cdot,t) \leq 1, \,\,
0\leq t
\leq T.$$
Moreover if \eqref{co1} is satisfied and 
if $g$ is smooth up to
the interface on $0 \leq t \leq T$, then there is a positive constant $c_0 $
such that
$$|Dg| \geq c_0 \: , \qquad \text{on \, } g(\cdot,t) >0, \,
0 \leq t \leq T.$$\end{lem}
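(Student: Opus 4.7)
The plan is to derive both bounds from a one-parameter family of rescalings of $f$ and the comparison principle, following the strategy of \cite{DL3} and adapting it to the general exponent $\alpha \in (1/2,1]$. The key tool is the scaling family $f_\e$ from \eqref{eqn-2.1.1}. First I would verify that, for positive constants $A, B$ chosen depending on $\al$, on $\rho_0$ from \eqref{co1}, and on the initial nondegeneracy constant $\lambda$ from \eqref{eqn-condition}, the function $f_\e$ is a supersolution of \eqref{eqn-f} for small $\e > 0$ and a subsolution for small $\e < 0$, with the matching initial and spatial-boundary ordering $f \leq f_\e$ (resp.\ $f \geq f_\e$) valid on a bounded region containing $\{0 \leq g \leq 1\}$. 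The boundedness of this region is guaranteed by \eqref{eqn-3.1}, which keeps $\{g \leq 1\}$ strictly interior in $\Omega(t)$, and by the finite-speed bound from Lemma \ref{thm-24}.

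By the comparison principle, the ordering propagates to all $t \in [0, T]$. Differentiating at $\e = 0$ produces differential Harnack-type inequalities
\begin{equation*}
 c_1 f - A t\, f_t \;\leq\; x \cdot D f \;\leq\; c_2 f - A t\, f_t,
\end{equation*}
where $c_1, c_2$ depend on $A, B, \al$. Translated to the pressure $g = (\beta f)^{1/\beta}$ via $Df = g^{\beta-1} Dg$ and $f_t = g^{\beta-1} g_t$, these read
\begin{equation*}
 (c_1/\beta)\, g - A t\, g_t \;\leq\; x \cdot Dg \;\leq\; (c_2/\beta)\, g - A t\, g_t.
\end{equation*}

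For the upper bound $|Dg| \leq C_0$ on $\{0 \leq g \leq 1\}$, observe that on this region $g$ is bounded by $1$ and $g_t$ is bounded in absolute value, using the uniform bound on $K^\al$ from Lemma \ref{lem-uK} to control $f_t$ and hence $g_t$ via the smoothness of $g$ up to $\Gamma$. Performing the scaling not about the origin but about arbitrary base-points $x_0 \in D_{\rho_0}$ yields the analogous inequality with $x$ replaced by $x - x_0$; choosing two base-points whose directions to a given point span $\R^2$ then bounds both components of $Dg$, giving the stated upper bound. For the lower bound $|Dg| \geq c_0$ on $\{g > 0\}$, use the opposite inequality together with the free-boundary identity $\gamma_t = -g_t/g_\nu$ and the non-degenerate finite speed from Lemma \ref{thm-24}, which forces $g_\nu \geq c_0 > 0$ on $\Gamma(t)$. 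Smoothness of $g$ up to $\Gamma(t)$ from Theorem \ref{thm-1.3} and a compactness argument on any relatively compact subset of $\{g > 0\}$ then extend the lower bound to the entire positivity set.

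The main obstacle is to verify the super/subsolution property and to arrange $A, B$ so that the parabolic-boundary ordering of $f$ and $f_\e$ holds simultaneously on $\{t=0\}$ and on a suitable spatial bounding level set. Because $\alpha \neq 1$, the rescaling factor $(1-A\e)^{(4\al-1)/2}(1+\e)^{4\al}(1+B\e)^{-(2\al-1)}$ generates terms in the linearization of \eqref{eqn-f} whose signs depend on $\al$ in a nontrivial way; these must be absorbed by taking $A, B$ sufficiently large, while still keeping the initial ordering compatible with \eqref{eqn-condition}. This is the point where the general-$\al$ analysis departs most from the $\al = 1$ case of \cite{DL3}.
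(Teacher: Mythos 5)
The paper proves this lemma by a direct maximum-principle argument, not by the scaling/differential-Harnack route you propose. For the upper bound it approximates $f$ by a decreasing sequence of strictly convex smooth solutions $f_\e$, sets $g_\e=(\beta f_\e)^{1/\beta}$, and applies the maximum principle to $X=\tfrac12|Dg|^2$: at an interior space-time maximum (with coordinates rotated so $g_y=0$, $g_x>0$), the evolution equation forces $X_t\leq 0$, so $X$ is controlled by its values at $t=0$ and on the lateral boundary $\{g=1\}$, where $|Dg|=|Df|$ is bounded by convexity. For the lower bound it considers $X=x\,g_x+y\,g_y$, derives $X_t\geq -CX$ at a minimum point, and concludes by Gronwall's inequality. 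Your route instead relies on the one-parameter family $f_\e$ of \eqref{eqn-2.1.1}, the comparison principle, and differentiation at $\e=0$ to obtain an inequality for $x\cdot Dg$, which you then convert to a bound on $|Dg|$ by translating the scaling center within $D_{\rho_0}$; this is the DL3-style differential-Harnack technique, which in this paper is deployed only for Lemma \ref{thm-24}, not for the gradient estimate itself.

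Your approach is plausible and has precedent, but it carries more technical overhead than the paper's and you have not closed it. First, the two-sided inequality you assert after differentiating at $\e=0$ actually requires two separate comparisons (with $\e>0$ and $\e<0$, generically with different constants $A,B$ versus $\tilde A,\tilde B$, exactly as the paper's Lemma \ref{thm-24} reflects), and you would need to verify the parabolic-boundary ordering on some bounding level set such as $\{g=1\}$ for all admissible base-points $x_0\in D_{\rho_0}$ simultaneously -- this is nontrivial and not spelled out. Second, your upper bound on $|Dg|$ depends on choosing two base-points whose directions to each $x$ span $\R^2$ with uniformly non-degenerate angle; near the free boundary this can be arranged, but it is genuinely messier than the direct maximum principle on $|Dg|^2$. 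Third, and most significantly, your lower bound ends with a qualitative compactness appeal: non-degenerate speed gives $g_\nu\geq c$ only on $\Gamma(t)$, and you then invoke smoothness plus compactness on $\{g>0\}$ to propagate this inward. That is correct in principle (since $Df=0$ only on the flat spot, $Dg\neq0$ on the compact set $\{0<g\leq1\}\times[0,T]$), but it gives a constant $c_0$ with no explicit dependence. The paper's argument via $X=x\cdot Dg$ and Gronwall is both shorter and gives the quantitative decay $|Dg|\geq c_0 e^{-Ct}$, which is what downstream lemmas (e.g.\ Lemma \ref{lem-41}, Corollary \ref{cor-44}) actually exploit.
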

\begin{proof}
(i) First, we are going to show the upper bound of $\D g$. Suppose that $f$ is approximated by $f_\e$ of \eqref{eqn-f} which is a decreasing sequence of solutions satisfying the positivity, strictly convexity and smoothness on $\{\, x \in
\Real^2:   |Df_\e(x)| < \infty \, \}$ for $0 \leq t \leq T$. Set
$g_\e = (\beta f_\e)^{\frac{1}{\beta}}$. We can choose the $f_\e's$ such
that $|Dg_\e| \leq C_0$ at $t=0$, on the set $\{ \, x: \, 0 \leq
g_\e \leq 1 \, \}$ and $|D g_\e|  \leq C_0$ at $g_\e=1$, $0 \leq t
\leq T$, for some uniform constant $C_0$. The last estimate holds
because of \eqref{maxf} and \eqref{eqn-3.1}.

Let's denote $g_\e$ by $g$ for convenience of notation, where $g=(\beta f)^{\frac{1}{\beta}}$ is a strictly positive  and a smooth solution of \eqref{eqn-g} with convex $f$. 
Let us apply the maximum principle to $X=  \frac{|Dg|^2 }{2}= \frac{g_x^2+g_y^2}2$ and assume $X$ has an interior maximum at the point $P_0=(x_0,y_0,t_0)$. By rotating the coordinates, we can assume $g_x > 0$ and $g_y=0$ at $P_0$.
Then we have $ X_t \leq 0$ by using the facts that $X_x =X_y =0$,
$X_{xx}  \leq 0$ and $X_{yy}  \leq 0$ are satisfied at $P_0$.
On the other hand  $|\D g|$ is bounded at $t=0$ from the condition on the initial data and on $\{g=1\}$, $|\D g|=\frac{|\D f|}{g^{\beta-1}}=|\D f|$ is bounded since $f$ is convex. Hence $X \leq \tilde C$, on  $0 \leq g
\leq 1, \,\, 0\leq t \leq T$, provided that $X \leq \tilde C$ at $t=0$ and $g=1$, $0 \leq t \leq T$ so that 
$$|Dg| \leq C_0, \, \qquad \text{on}\,\,\,   0 \leq g(\cdot,t) \leq 1,
\,\, 0\leq t \leq T.$$
(ii.) Now we are going to show the lower bound of the gradient.
Consider $$X=x\, g_x + y\, g_y.$$ Using the
maximum principle as (i),  we have that
\begin{equation}\label{eqn-X1}
X_t \geq -C \, X
\end{equation}
where $C$ is a constant depending on $\rho_0$ and
\begin{equation}
\frac{d}{dt} \, X(\gamma(t),t)  \geq - C \, X
\end{equation}
at a interior or boundary
minimum  point $P_0$ of $X$. Then 
$$\min_{ \{ g(\cdot,t) >0 \}} X(t) \geq  \min_{ \{ g(\cdot,0) >0 \}} X(0) \, e^{-Ct}$$
for all $0 \leq t \leq T$ by Gronwall's inequality, and it implies the desired estimate.
\end{proof}

\begin{thm}\label{thm-33}
Under the same assumptions of Lemma \ref{lem-31},  there exist positive
constants $C_1$, $C_2$ and $\e_0$,
 depending only on $\rho_0$
and the initial data, for which
\begin{equation}\label{eqn-i13}
-C_2 \leq (\gamma_\e)_t (\theta,t)  \leq -C_1 < 0, \qquad  \text{for} \,\, 0\leq
t \leq T \,\, \text{and }\,\,0<\e<\e_0.
\end{equation}
\end{thm}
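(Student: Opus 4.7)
The plan is to translate the Harnack-type level-set inequalities of Lemma \ref{thm-24} into pointwise bounds on $(\gamma_\e)_t$. Differentiating the implicit relation $f(\gamma_\e(\theta,t)\cos\theta,\gamma_\e(\theta,t)\sin\theta,t)=\e$ in $t$ and using $f = g^{\be}/\be$ with $g=(\be f)^{1/\be}$ gives
\begin{equation*}
(\gamma_\e)_t(\theta,t)=-\frac{f_t}{f_r}=-\frac{g_t}{g_r}.
\end{equation*}
By Lemma \ref{lem-31}, $c_0\leq g_r\leq C_0$ on $\{0\leq g\leq 1\}$, so choosing $\e_0$ small enough that the $\e$-level set stays inside this region for $0\leq t\leq T$, the quotient is unambiguously defined and the implicit function theorem justifies differentiating $\gamma_\e$ in $t$.

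For the upper estimate $(\gamma_\e)_t\leq -C_1<0$, I would fix $(\theta,t_0)$ and take the forward difference quotient of the first inequality in \eqref{eqn-fb3}. Letting $t\to t_0^+$ yields $(\gamma_\e)_t(\theta,t_0)\leq -\gamma_\e(\theta,t_0)/(\tilde B+\tilde AT)$. Condition \eqref{co1} forces $D_{\rho_0}\subset\Sigma^1_t$, so the free boundary satisfies $\gamma(\theta,t)\geq\rho_0$ and hence $\gamma_\e\geq\gamma\geq\rho_0$; thus $C_1:=\rho_0/(\tilde B+\tilde AT)$ works, and this extends to $t_0=0$ by continuity of $\gamma_\e(\theta,\cdot)$.

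For the finite-speed bound $(\gamma_\e)_t\geq -C_2$, the mirror argument applied to the second inequality in \eqref{eqn-fb3} yields
\begin{equation*}
(\gamma_\e)_t(\theta,t_0)\geq -\frac{\gamma_\e(\theta,t_0)}{\tilde C t_0}\geq -\frac{R_0}{\tilde C t_0},
\end{equation*}
using that $\Sigma_t\subset B_{R_0}(0)$ implies $\gamma_\e\leq R_0$. This is uniform on any subinterval $[\delta_0,T]$ with $\delta_0>0$, but blows up as $t_0\to 0$, which is the main obstacle. To close the gap I would invoke the hypothesis (inherited from Theorem \ref{thm-1.3} through Lemma \ref{lem-31}) that $g$ is smooth up to the interface on $0\leq t\leq T$: on a short initial strip $[0,\delta_0]$, the identity $(\gamma_\e)_t=-g_t/g_r$ combined with $|g_t|\leq C$ read off directly from \eqref{eqn-g} (using boundedness of $Dg$ and $D^2 g$ on this strip inherited from the $C^{2+\gamma}$ initial regularity) and the lower bound $g_r\geq c_0$ of Lemma \ref{lem-31} yields $|(\gamma_\e)_t|\leq C/c_0$ uniformly in $\e<\e_0$. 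Taking $C_2$ to be the larger of the two resulting constants produces the desired uniform lower bound on all of $[0,T]$.
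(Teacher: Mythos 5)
The paper gives no proof of Theorem \ref{thm-33} at all: it is stated bare, with the preamble of Section 3.1 deferring to the scaling/comparison argument of \cite{DL3} via the functions $f_\epsilon$ in \eqref{eqn-2.1.1} and the integrated Harnack inequalities of Lemma \ref{thm-24}. Your proposal reconstructs exactly the intended route -- differentiate \eqref{eqn-fb3} in $t$ at $t=t_0$ to get $(\gamma_\e)_t(\theta,t_0)\leq -\gamma_\e(\theta,t_0)/(\tilde B+\tilde AT)$ and $(\gamma_\e)_t(\theta,t_0)\geq -\gamma_\e(\theta,t_0)/(\tilde C t_0)$, use \eqref{co1} for $\gamma_\e\geq\rho_0$, and patch the degenerate lower bound near $t_0=0$ with the short-time regularity of Theorem \ref{thm-1.3} -- and the logic is sound and non-circular (you correctly avoid Lemma \ref{lem-41} and Corollary \ref{cor-42}, which are downstream of this theorem).

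Two justifications should be tightened. First, Lemma \ref{lem-31} bounds $|Dg|$ from below, not $g_r$; to get $g_r\geq c_0'>0$ you need the geometric observation that the level sets of $g$ are convex and contain $D_{\rho_0}$ while being contained in $B_{R_0}$, so the angle between the radial direction and the outward normal is uniformly bounded away from $\pi/2$ and $g_r\geq (\rho_0/R_0)|Dg|$. Second, on the initial strip you cannot invoke ``boundedness of $D^2g$ up to the interface'': in the $C^{2+\gamma}_s$ scale only $g\,g_{\nu\nu}$ (i.e.\ $z\,h_{zz}$) is controlled, and $g_{\nu\nu}$ itself may blow up at $g=0$. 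The bound $|g_t|\leq C$ on $[0,\delta_0]$ still holds, either because $h_t\in C^\gamma_s$ is part of the short-time $C^{2+\gamma}_s$ estimate, or because the normal second derivative enters \eqref{eqn-g} only through the combination $g\det D^2g$, which is bounded. Finally, extending the upper bound to $t_0=0$ uses continuity of $(\gamma_\e)_t=-g_t/g_r$ (available from smoothness up to $t=0$), not merely continuity of $\gamma_\e$. With these repairs the argument is complete.
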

\medskip

\section{Second derivative estimate}
\subsection{ Decay Rate of $\alpha$-Gauss Curvature} Under the same conditions with section (\ref{sec_3.1}) and (\ref{sec_3.2}), we will show a priori bounds of the Gauss Curvature $K= \det (D^2f) /  (1+|Df|^2)$ and the second derivatives of $f$ and $g$. 

\begin{lem}\label{lem-41} For the same hypothesis of Theorem \ref{thm-1.3} and \eqref{co1}, there
exists a positive constant  $c $ such that 
\begin{equation}\label{eqn-j2}
 c \leq \frac {K^{\al}}{g^{\frac{1}{2\alpha-1}}} \leq c^{-1}, \qquad \text{on} \,\,\,  0 \leq t \leq T
\end{equation}
for $K= \det D^2f /  (1+|Df|^2)$.
\end{lem}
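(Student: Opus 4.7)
My plan is to establish the two-sided bound on $P := K^\alpha/g^{1/(2\alpha-1)}$ via the parabolic maximum principle, following the strategy of the $\alpha = 1$ case in \cite{DL3}; this is an Aronson-B\'{e}nilan type estimate comparing $K^\alpha$ to a power of the pressure $g$. First I would reduce to an estimate on $g_t$. From \eqref{eqn-f} with the lemma's definition $K = \det D^2 f/(1+|Df|^2)$ one obtains $f_t = K^\alpha(1+|Df|^2)^{(1-2\alpha)/2}$; combined with $f=g^\beta/\beta$, so that $f_t = g^{\beta-1}g_t$, this yields
\[
K^\alpha \;=\; g^{\beta-1}\, g_t\, (1+|Df|^2)^{(2\alpha-1)/2}.
\]
Since $|Df|$ is uniformly bounded on $\{0<g\le 1\}$ by Lemma \ref{lem-31}, the last factor is harmless and the claim reduces to a two-sided estimate on $g_t$ against a power of $g$. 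A crucial input is the boundary identity $g_t|_{\Gamma} = \theta(\alpha)^\alpha g_\nu^{2\alpha}g_{\tau\tau}^\alpha$, obtained by evaluating \eqref{eqn-g} at $g=0$, which together with the propagated nondegeneracy (Condition \ref{cond}(I) and the finite non-degenerate speed Theorem \ref{thm-33}) controls $P$ along $\Gamma$.

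Next I would set up the parabolic inequality for $P$. Combining the evolution identity Lemma \ref{ev-h}(v) for $\partial_t K^\alpha$ with \eqref{eqn-g} for $\partial_t g$, and computing the action on $P$ of the degenerate linearized operator in the class \eqref{eqn-hl}, one obtains $P_t \leq \mathcal{L}P + (\text{l.o.t.})$ at an interior extremum. The upper bound $P \le c^{-1}$ then follows from the maximum principle once the initial and boundary data are controlled: at $t=0$ by the $C^{2+\gamma}$-initial regularity of Theorem \ref{thm-1.3}; on $\Gamma(t)$ by the boundary value above; and on a fixed inner reference level set $\{g=g_0\}$ bounded away from $\Gamma$, by Lemmas \ref{lem-uK}--\ref{lem-uH} together with the Corollary after Lemma \ref{lem-lK}, which yield uniform bounds on $K^\alpha$ and on $g$ directly on the strictly convex part. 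The matching lower bound $P \ge c$ then follows from the symmetric argument applied to $P^{-1}$, using that $g_t > 0$ strictly on $\{g > 0\}$ by the non-degenerate speed estimate.

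The main obstacle is the gradient-squared terms with coefficient $(1-\alpha)$ that arise when $\partial_t K^\alpha$ is expanded via Lemma \ref{ev-h}(iv)--(v); these vanish when $\alpha = 1$ (why the proof of \cite{DL3} closes directly) but for $\tfrac12 < \alpha < 1$ they have an unfavorable sign at an interior extremum and obstruct a direct application of the maximum principle to $P$. I would handle them by replacing $P$ with $P + \eta\,\Phi$ for a carefully chosen auxiliary $\Phi$ (for instance a suitable lower-order power of $g$, or a geometric scalar built from $H$ or from the support function) calibrated so that the principal diffusion $\mathcal{L}(\eta\Phi)$ dominates the unfavorable $(1-\alpha)$ contributions near $\Gamma$. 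This is precisely the "new quantities" device that the introduction attributes to the later Lemmas \ref{lem-43} and \ref{lem-AB}.
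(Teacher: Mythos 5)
Your initial reduction is exactly the one the paper uses: differentiating $f=g^{\beta}/\beta$ in $t$ gives $f_t=g^{\beta-1}g_t$, and combining with \eqref{eqn-f} yields
$$g_t \;=\; \frac{K^{\alpha}}{(1+|Df|^2)^{\frac{2\alpha-1}{2}}\,g^{\beta-1}},\qquad \beta-1=\tfrac{\alpha}{2\alpha-1},$$
so that, since $|Df|$ is bounded near $\{g=0\}$ (Lemma \ref{lem-31}), the claim is equivalent to two-sided \emph{constant} bounds on $g_t$. (Note this means the decisive quantity is $K^{\alpha}/g^{\alpha/(2\alpha-1)}$, not $K^{\alpha}/g^{1/(2\alpha-1)}$ — the exponent in the displayed \eqref{eqn-j2} should be read as $K/g^{1/(2\alpha-1)}$, which is consistent with the way Corollary \ref{cor-44} later uses it via $\det D^2 f\ge c\,g^{2\beta-3}$.)

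After that point, however, you abandon the short route in favor of a maximum-principle scheme for $P=K^{\alpha}/g^{1/(2\alpha-1)}$, and that scheme is left genuinely incomplete. The crucial obstruction — the $(1-\alpha)$-weighted gradient-squared terms coming from Lemma \ref{ev-h}(iv)--(v) — is acknowledged, but your fix is a placeholder: you postulate an auxiliary $\Phi$ ``calibrated so that'' the bad terms are absorbed, without producing $\Phi$ or verifying that the absorption works. That is the hard part, and in this lemma it is not needed at all. The paper never runs an evolution equation for $K^{\alpha}$ here. Instead it notes that along the $\varepsilon$-level set $r=\gamma_\varepsilon(\theta,t)$, written in polar coordinates, the identity $g(\gamma_\varepsilon(\theta,t),\theta,t)=\varepsilon$ gives by the chain rule
$$g_t \;=\; -\,g_r\,\dot\gamma_\varepsilon(\theta,t),$$
and now the two ingredients already in hand finish it: Lemma \ref{lem-31} gives $c< g_r< c^{-1}$ near the interface, and Theorem \ref{thm-33} gives $-C_2\le\dot\gamma_\varepsilon\le -C_1<0$. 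Thus $C_1 c\le g_t\le C_2 c^{-1}$, uniformly in $\varepsilon$, which is the desired estimate. You cite Theorem \ref{thm-33} yourself, but only as boundary data for a maximum principle; the missing observation is that it, together with the gradient bound, already determines $g_t$ at every interior point by the chain rule, with no evolution equation, no interior extremum analysis, and no auxiliary quantity. So your plan is not wrong in its first step, but it then reproves (in an unverified way) something that Lemmas \ref{lem-31} and Theorem \ref{thm-33} give for free, and the speculative auxiliary-function step is where the argument, as written, does not close.
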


\begin{proof}
We will only consider the bound of \eqref{eqn-j2} around the interface. It suffices to show the bound of $g_t$ from $g_t=
K^{\alpha} /\left((1+|Df|^2)^{\frac{2\al -1}{2}} g^{\frac{\alpha}{2\alpha-1}} \right)$ because $|Df|$ is bounded around $\{ g=0\}$.
For $r=\gamma_\e(\theta,t)$ which is the $\e$-level set of $g$ in polar coordinates,
$$ g_t = -  g_r \cdot  \dot \gamma_\e (\theta,t)$$ since $g(\gamma_\e(\theta,t), \theta, t) =\e$ and  the level sets of $g$ is convex.
Then we know that $ c  < g_r < c^{-1} $ and $-C_2 \leq \dot \gamma_\e  (\theta,t)  \leq -C_1 < 0$ for $ 0\leq t \leq T$ from Lemma \ref{lem-31} and Theorem  \ref{thm-33} implying that $ C_1 \, c <  g_t 
<  C_2  \, c^{-1}$, so proof is completed.
\end{proof}

\begin{corollary}\label{cor-42}
Under the assumptions of Lemma $\ref{lem-41}$, 
the solution $g$
of \eqref{eqn-g} satisfies the bound
\begin{equation}\label{eqn-j3}
 c \leq g_t \leq c^{-1}.
\end{equation}
\end{corollary}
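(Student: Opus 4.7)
My plan is to observe that the desired bound is essentially already proven in the course of proving Lemma \ref{lem-41}. Recall that there, near the free boundary, one writes the $\e$-level curve of $g$ in polar coordinates as $r=\gamma_\e(\theta,t)$ and differentiates the identity $g(\gamma_\e(\theta,t)\cos\theta,\gamma_\e(\theta,t)\sin\theta,t)=\e$ in $t$ to obtain
\[
 g_t \;=\; -\,g_r\,\dot\gamma_\e
\]
on that curve, where $g_r=Dg\cdot\hat r$ is the radial derivative of $g$. The gradient estimate of Lemma \ref{lem-31} provides $c_0\le|Dg|\le C_0$ on $\{0\le g\le 1\}$. Convexity of the level sets combined with Condition \ref{cond}(II)---which places the disk $D_{\rho_0}$ inside the flat spot---guarantees that the tangent line to each level curve stays outside $D_{\rho_0}$; hence the angle $\phi$ between $Dg/|Dg|$ and $\hat r$ satisfies $\cos\phi\ge\rho_0/R$ for an a priori diameter bound $R$ on $\Sigma_t$, and so $g_r=|Dg|\cos\phi$ is bounded above and below by positive constants. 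Combining this with the finite--nondegenerate speed $-C_2\le\dot\gamma_\e\le -C_1<0$ from Theorem \ref{thm-33} and letting $\e\to 0^+$ yields $c\le g_t\le c^{-1}$ up to the interface.

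On the complementary region where $g$ stays bounded below, the strictly convex part $\Sigma_t^2$ is smooth (Theorem \ref{thm-1}), $K^\alpha$ is bounded above (Lemma \ref{lem-uK}(i)) and away from zero (Lemma \ref{lem-lK}), and the PDE \eqref{eqn-g} delivers the two-sided bound directly. No step of this plan is hard; the one moment requiring care is the geometric comparability $g_r\asymp|Dg|$ near the interface, which is a one-line consequence of convexity of the level sets and the interior-disk condition \eqref{co1}.
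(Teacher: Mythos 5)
Your proposal is correct and follows essentially the same route as the paper: the bound on $g_t$ is exactly the intermediate step in the proof of Lemma \ref{lem-41}, obtained from $g_t=-g_r\,\dot\gamma_\e$ together with the gradient estimate of Lemma \ref{lem-31} and the nondegenerate interface speed of Theorem \ref{thm-33}. Your extra remark justifying $g_r\asymp|Dg|$ via convexity of the level sets and the interior disk \eqref{co1} is a small but welcome elaboration of a point the paper leaves implicit.
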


\subsection{Upper bound of the Curvature of Level Sets}

\begin{lem}\label{lem-43}  With the assumptions of Theorem \ref{thm-1.3} and condition
\eqref{co1},  there exists a constant  $C>0 $ such that
$$ 0 < g_{\tau\tau} \leq C$$
with $\tau$  denoting the  tangential direction  to the level sets
of $g$.
\end{lem}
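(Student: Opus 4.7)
The lower bound $g_{\tau\tau} > 0$ is essentially geometric. Since $\Sigma_t$ is convex and $g = (\beta f)^{1/\beta}$ is a monotone function of $f$, the level sets of $g$ coincide with those of $f$ and are convex curves in $\Omega(t)$. Strict positivity holds on the interface by the nondegeneracy hypothesis \eqref{eqn-condition}, and in the region $\{0 < g \le 1\}$ by the strict convexity and $C^\infty$ smoothness of $\Sigma^2_t$ from Theorem \ref{thm-1}. For the upper bound the plan is a maximum principle argument for $W = g_{\tau\tau}$ on the parabolic cylinder $\{0 \le g \le 1\} \times [0,T]$.

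On the parabolic boundary the estimate can be obtained directly. At $t=0$ it is given by the $C^{2+\gamma}$ hypothesis on the initial datum. Along $\{g=1\}$ it follows from the smoothness of $\Sigma^2_t$ established in Theorem \ref{thm-1}. Along the interface $\{g=0\}$ I would rotate coordinates so that $g_y = 0$; since $\beta = (3\alpha-1)/(2\alpha-1) > 1$ in our range, the quantities $g^{2\beta-2}|Dg|^2$ and $g\det D^2 g$ vanish as $g\to 0$, and \eqref{eqn-g} collapses to
$$ g_t = \bigl(\theta(\alpha)\,|Dg|^2\,g_{\tau\tau}\bigr)^{\alpha} \qquad \text{on } \{g=0\}, $$
i.e.\ $g_{\tau\tau} = g_t^{1/\alpha}/(\theta(\alpha)|Dg|^2)$. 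The optimal gradient bound (Lemma \ref{lem-31}) and the two-sided bound on $g_t$ (Corollary \ref{cor-42}) therefore give a uniform interface bound.

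The heart of the argument is to rule out a large interior maximum. Assuming $W$ attains its maximum at an interior point $P_0 = (x_0,y_0,t_0)$ with $0 < g(P_0) < 1$, I would rotate so that $g_y(P_0)=0$ and differentiate \eqref{eqn-g} twice in the $y$-direction; using $W_t(P_0)\ge 0$, $\partial_y W(P_0)=0$, and $\partial_y^2 W(P_0)\le 0$, together with the known bounds on $g$, $|Dg|$, and $g_t$, one aims at a pointwise inequality forcing $W(P_0)\le C$. The main obstacle, as flagged in the introduction, is the appearance of lower-order terms carrying a factor $(1-\alpha)$, generated for instance by $\partial[\cdots]^\alpha = \alpha[\cdots]^{\alpha-1}\partial[\cdots]$ applied to the denominator $(1+g^{2\beta-2}|Dg|^2)^{(4\alpha-1)/2}$ and by the cross derivatives of the numerator; these are absent when $\alpha=1$ as in \cite{DL3} and are not immediately absorbed into the principal parabolic part of the evolution of $g_{\tau\tau}$. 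To neutralize them I plan to replace $W$ by an auxiliary quantity
$$ \widetilde W \;=\; g_{\tau\tau} \;+\; A\,(1-\alpha)\,\Phi(g,|Dg|), $$
where $\Phi$ is a carefully chosen smooth function (e.g.\ a suitable combination of $|Dg|^2$ and a power of $g$) and $A$ is a large constant, selected so that the linearized operator applied to $A(1-\alpha)\Phi$ produces exactly the negative contribution needed to cancel the offending $(1-\alpha)$-terms in $\partial_t W - LW$. Once this absorption is achieved, the maximum principle applied to $\widetilde W$, combined with the boundary bound above and Lemma \ref{lem-31}, Corollary \ref{cor-42}, will yield $\widetilde W\le C$ and hence the desired estimate $g_{\tau\tau}\le C$.
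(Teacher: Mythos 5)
Your lower bound, your parabolic-boundary analysis (the collapse of \eqref{eqn-g} at $g=0$ to $g_t=(\theta(\alpha)|Dg|^2 g_{\tau\tau})^{\alpha}$ combined with Lemma \ref{lem-31} and Corollary \ref{cor-42}), and the general shape of the argument are consistent with the paper. But the interior maximum step — the entire content of the lemma — is left as an unexecuted ansatz, and the ansatz you propose is not the one that works. The paper does not run the maximum principle on $g_{\tau\tau}$ or on $g_{\tau\tau}+A(1-\alpha)\Phi(g,|Dg|)$; it uses
$$X = g_y^2 g_{xx} - 2g_xg_yg_{xy} + g_x^2 g_{yy} + \bigl(g(g_{xx}+g_{yy})+\theta|Dg|^2\bigr) = (g+g_\nu^2)\,g_{\tau\tau} + (g\,g_{\nu\nu}+\theta g_\nu^2),$$
i.e.\ $g_{\tau\tau}$ is \emph{multiplied} by the weight $g+g_\nu^2$ and corrected additively by $g\Delta g+\theta|Dg|^2$ (bounded since $f\in C^{1,1}$). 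Neither modification carries a factor $(1-\alpha)$: this is the same quantity as in \cite{DL3} and is needed even at $\alpha=1$. Its purpose is not to cancel the $(1-\alpha)$-terms but to control the \emph{third derivatives}, which your plan never confronts. At an interior maximum of $g_{\tau\tau}$ alone, $\partial_t g_{\tau\tau}-Lg_{\tau\tau}$ contains terms quadratic in $D^3g$ of no definite sign; with the paper's $X$, the first-order conditions $X_x=X_y=0$ (after a shear making $g_{xy}(P_0)=0$) express $g_{xyy}$ and $g_{yyy}$ in terms of $g_{xxx},g_{xxy}$ as in \eqref{eqn-j4}, and the third-derivative contributions then assemble into negative perfect squares (the terms $A$ and $-B_1(\cdot)^2$ in \eqref{eqn-j7}--\eqref{eqn-j9}).

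The genuinely new $(1-\alpha)$-dependent terms are then disposed of not by a corrector of the form $A(1-\alpha)\Phi$, but by verifying the signs of the explicit quantities $B_1$, $E_1$, $E_2$, $C_8$ for large $Z=g_x^2g_{yy}$, after a parabolic rescaling $g_\lambda(x,t)=\lambda^{-1/(\gamma+2)}g(\lambda x,\lambda^{(4\gamma+3)/(2\gamma+1)}t)$ that forces $K_g^{\gamma/(1+2\gamma)}<\tfrac12$ near the free boundary. Your sentence ``$A$ and $\Phi$ selected so that the linearized operator applied to $A(1-\alpha)\Phi$ produces exactly the negative contribution needed'' asserts the conclusion of the hard computation rather than supplying it; as written there is no evidence such a $\Phi$ depending only on $g$ and $|Dg|$ exists, since the offending terms involve $D^2g$ and $D^3g$. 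So the proposal identifies the right family of techniques but has a genuine gap exactly where the lemma is difficult.
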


\begin{proof}
Strictly convexity of the level sets of $g$ directly implies $g_{\tau\tau} >0 $. We will obtain the bound from above by using the maximum principle on
\begin{equation}\label{eqn-j300}
X = g_y^2 g_{xx} - 2g_xg_yg_{xy} + g_x^2g_{yy} + (g(g_{xx}+g_{yy})+\theta |\D g|^2).
\end{equation}
Let $\nu$ and $\tau$ denote the outward normal and tangential
direction to the level sets of $g$ respectively. Then we can write $X$ as\begin{equation}\label{eqn-j30}
X  = (g+ g_{\nu}^2) \, g_{\tau\tau} + (g\, g_{\nu\nu} + \theta g_\nu^2)
\end{equation}
since $g_\tau =0$.
We have also known that
$$ 0 < c \leq  g_{\nu} \leq c^{-1} \qquad \text{on} \,\, g> 0, \, \, 0 \leq t \leq
T$$ for some $c >0$, depending on $\rho_0$ and the initial data.
Also $ g(g_{xx}+g_{yy})+\theta |\D g|^2 $ is  bounded since $f \in{C^{1,1}}$.
Therefore, an upper bound on $X$ will imply the desired upper bound on
$g_{\tau\tau}$.  We will apply the maximum principle on the evolution of $X$. The term $(g(g_{xx}+g_{yy})+\theta |\D g|^2)$ on $X$ will control the sign of error terms.
Corollary \ref{cor-42} implies
$$X \leq C \qquad \text{at} \quad g=0,$$
since we know that $X=\frac{1}{\theta}g_t^{\frac{1}{\alpha}} +\theta |\D g|^2$ at the free-boundary $g=0$. 
Then we can assume that $X$ has its space-time maximum at
an interior point $P_0=(x_0,y_0,t_0)$. Let's assume that
\begin{equation}\label{eqn-j31}
g_\tau= g_y=0 \,  \qquad \mbox{and} \qquad g_\nu =
g_x >0\quad
\mbox{at \, $P_0$}
\end{equation}
without loss of generality, since $X$ is
rotationally invariant. Also let's consider the following transformation
$$ \tilde{g} (x , y ) = g(\mu , \eta  ) $$
where $\mu = x$ and $\eta=y-ax $ with $ a = \displaystyle \frac{g_{\mu \eta}(x_0 , y_0 , t_0)}{g_{\mu \mu } (x_0 , y_0 , t_0)}$.
Then we can obtain $\tilde{g}_x = g_{\mu} - \frac{g_{\eta}g_{\mu \eta}}{g_{\eta \eta}}=g_{\mu} >0 $, \, $\tilde{g}_y = g_{\eta} =0 $, and 
\begin{equation*}
\begin{split}
(\tilde{g}_{ij})= \left[
\begin{array}{clc}
g_{\mu \mu} -\displaystyle \frac{g_{\mu \eta} ^2}{g_{\eta \eta}} &\quad\quad  0 \\\\
0&\quad \quad g_{\eta \eta} \\
\end{array}
\right]
\end{split}
\end{equation*}
at $ P_0 $. 
Here $\tilde{g}_{yy} =g_{\eta \eta} >0 $ and $\tilde{g}_{xx}<0$ at $ P_0 $. Hence equation is maintained with this change of coordinate. Also we can drop off the third derivative term of $\tilde{g}$ because it is changed under the perfect square of the third derivative of $g$. Hence we can assume 
\begin{equation}\label{eqn-j31-1}
\tilde{g}_{xy }=0 
\end{equation}
at $P_0$ without loss of generality. Then we will proceed with the function $g$ instead of $\tilde{g}$ for convenient of notation.
From \eqref{eqn-j300}, we get
$$X=(g+g_x^2) g_{yy} +  (g g_{xx} + \theta g_x^2), \quad
\mbox{at \, $P_0$}.$$ 
At the maximum point $P_0$,  we also have
$X_x = 0$ and $X_y=0$
implying that
\begin{equation}\label{eqn-j4}
g_{xyy} = - \frac {g  g_{xxx} +  2 g_x \det D^2 g
+ (2\theta +1) g_xg_{xx} + g_x g_{yy}}{g+ g_x^2}
\quad \text{and }\quad g_{yyy} = - \frac{ g g_{xxy} }{g + g_x^2}.
\end{equation}

We next compute the evolution equation of $X$ from the evolution equation
of $g$
to find a contradiction  saying that  $$0\leq X_t <0 \qquad \mbox{at \,\, $P_0$},$$  when $X>C>0$ for some
constant
$C$.  This implies that $X \leq C$, on $0 \leq t \leq T$.

First we will consider the following simpler case that $f$ satisfies the evolution
$$f_t =( \det D^2 f)^{\alpha}$$
for the convenience of the reader. Then $g = (\beta f)^{\frac{1}{\beta}}$ satisfies the equation
\begin{equation}\label{eqn-j6}
g_t  = (g \, \det D^2 g + \theta(g_y^2g_{xx} - 2g_xg_yg_{xy}+g_x^2g_{yy}))^{\alpha}.
\end{equation}
To compute the evolution of $X$ we differentiate twice the equation
\eqref{eqn-j6}. 
Set
\begin{align*}
K_{g} &=g \, \det D^2 g + \theta(g_y^2g_{xx} - 2g_xg_yg_{xy}+g_x^2g_{yy}),\\
I &=1+g^{2\beta-2}|\D g|^{2} ,  \,\,\text{and }\,\, J =g+|\D g|^{2}. 
\end{align*}
Let $L$ denote the operator
$$LX := X_t - \alpha K_{g}^{\alpha-1}\{  \, (gg_{yy}+ \theta g_y^2)  \, X_{xx} - 2 (gg_{xy} + \theta g_xg_y) \, X_{xy} +(gg_{xx}+ \theta g_x^2) \, X_{yy} \, \}.
$$
Then after many tedious calculations, we have that at the maximum point
$P_0$, 
\begin{equation}\label{eqn-j7}
  \begin{split}
LX =A+\frac{1}{(1+2\gamma)^2 (g+g_x ^2)^2 K_g^2}B
    \end{split}
      \end{equation}
      
 where $\gamma = \theta -1$
 and 
  \begin{equation}\label{eqn-j8}
  \begin{split}
A&=- 4g^2 g_{xxy} ^2 -\frac {4 g^3}{g^2_x+g}\, \Big(g_{xxx}+\frac{6g_x g_{xx}+3g_{x}g_{xx}g_{yy}}{g}\Big)^2\\
&+\frac{g^2_{yy}}{g^2_x+g}\big{\{}-(2g^2_x-gg_{xx})^2 +3g_{xx}(g^2_x+g)(g^2_x-gg_{xx})\big{\}}.
    \end{split}
      \end{equation}
In addition, $B=0$ if $\gamma=0$, otherwise   
  \begin{equation}\label{eqn-j9}
  \begin{split}
B & =-B_1 g^2 (g^2_x+g) g^2_{xxy}-g^3B_1(g_{xxx}+B_{11})^2 +\big((1+2\gamma)^2 (g+g_x ^2)^2 K_g^2\big) \frac{E_1 }{E_2 }.
    \end{split}
      \end{equation}
Here 
 \begin{equation*}
  \begin{split}
B_1&=4(1+2\gamma)^2K_g^{\frac{2+3\gamma}{1+2\gamma}}\big(\frac{1+\gamma}{1+2\gamma}-K_g^{\frac{\gamma}{1+2\gamma}}\big)+\gamma(1+\gamma)K_g^{\frac{1+\gamma}{1+2\gamma}}\big( (g_x ^2 +g)g_{yy}-\big{\{}gg_{xx}+(1+\gamma)g_x ^2 \big{\}}\big)^2
    \end{split}
      \end{equation*}
      and set  $Z=g_x ^2 g_{yy}$ so that
 \begin{equation*}
  \begin{split}
   E_2 &=4(1+2\gamma )^3g g^6_x (g^2_x+g)^2 K_g^{\frac{6+13\gamma}{1+2\gamma}}Z^4 \Big( \frac{(1+\gamma)(1+\frac{3}{2}\gamma)}{(1+2\gamma)^2}- K_g ^{\frac{\gamma}{1+2\gamma}} \Big)\\
 &+g\gamma(1+\gamma)(1+2\gamma)g^2_x(g^2_x+g)^3 K_g^{\frac{5+11\gamma}{1+2\gamma}}Z^6+l.o.t.\\
&\geq g(E_{11}Z^4 + \gamma E_{12}Z^6 + l.o.t.).
    \end{split}
      \end{equation*}
where $l.o.t.$ means lower order term. We may assume that $P_0 $ lies close to the free-boundary and that  $K_g ^{\frac{\gamma}{1+2\gamma}}<\frac{1}{2}$ by considering a scaled solution $g_{\lambda}(x, t)={\lambda}^{-\frac{1}{\gamma+2}}g(\lambda x, \lambda^{\frac{4\gamma+3}{2\gamma +1}}t)$ as  $g$ at the beginning of proof with $\lambda^{\frac{4\gamma +5}{\gamma +2}} \leq \frac{1}{||K_g||_{L^{\infty}}}\big(\displaystyle \frac{1}{2}\big)^{\frac{1+\gamma}{\gamma}}$. Then on $g \leq 1$, we have $A$ is negative in \eqref{eqn-j8} since $g_{xx}$ is negative and $E_{11}, E_{12} \geq \delta _0 (g_x ,  K_g) > 0$ uniformly, which implies $E_2$ is positive. And we also have,  in \eqref{eqn-j9},
 \begin{equation*}    
 \begin{split}
   E_1 &=-\gamma (1+\gamma ) (g^2_x+g)^2 K_g^5Z^8 C_8 + \gamma(Z^7 C_7 + l.o.t.)
   \end{split}
       \end{equation*}
 \begin{equation*}   
 \begin{split}
\text{with} \,\, C_8 &=\Big( (1+\gamma )^2 (g^2_x+g)(2\gamma g+3(5+4\gamma)g^2_x)+ \gamma (1+2\gamma )  K_g ^{\frac{\gamma}{1+2\gamma}} g^2 \\
 &-(1+2\gamma)(15+2\gamma(7+2\gamma)) K_g^{\frac{\gamma}{1+2\gamma}}g g^2_x -3(1+\gamma)(1+2\gamma)(5+4\gamma)K_g^{\frac{\gamma}{1+2\gamma} }g^4_x \Big).
    \end{split}
      \end{equation*}
Now  we can show  $C_8 \geq \delta _1 (g_x ,  K_g)  > 0$ uniformly and then  $E_1< 0$ for sufficiently large $Z$.  Therefore $B$ is negative. Hence we can obtain desired result.

We now return to the case of the $\alpha$-Gauss Curvature Flow. 
Let us set $$I = 1 + g^{2\beta-2} \, |Dg|^2, \, J= g+ |Dg|^2 \, \text{and} \, Q=(g \det D^2 g + \theta(g_y^2g_{xx} - 2g_xg_yg_{xy}+g_x^2g_{yy}))^{\alpha}.$$ Also let $C=C(\|g\|_{C^1},\, \|f \|_{C^{1,1}})$ denote various constants and $\tilde{L}X $ denote the operator
$$\tilde{L}X := X_t - \alpha K_{g}^{\alpha-1} I^{-\frac{4\alpha-1}{2} }\{  \, (gg_{yy}+\theta g_y^2)  \, X_{xx} - 2 (gg_{xy} + \theta g_xg_y) \, X_{xy} +
(gg_{xx}+ \theta g_x^2) \, X_{yy} \, \}.$$
We find, after several calculations, that at the maximum point $P_0$,
where \eqref{eqn-j31} and \eqref{eqn-j31-1} hold, $X$ satisfies
the inequality
 \begin{equation*}
  \begin{split}
\tilde{L}X &=I^{-\frac{4\alpha-1}{2} }LX -\frac{4\alpha -1}{2} (g+ g_x ^2 )I^{-\frac{4\alpha+1}{2}} Q I_{yy} -(4\alpha -1 )g_x (\theta + g_{yy}) I^{-\frac{4\alpha+1}{2}} Q I_x \\
&+g\Big{\{} -(4\alpha -1) I^{-\frac{4\alpha+1}{2}} Q_x I_{x} - \frac{4\alpha -1}{2} I^{-\frac{4\alpha+1}{2}}  Q I_{xx} + \frac{16\alpha ^2 -1}{4} I^{-\frac{4\alpha+3}{2}} Q I_{x} ^2 \Big{\}} + I^{-\frac{4\alpha+1}{2}} Q g_{xx} 
 \end{split}
      \end{equation*}
and from \eqref{eqn-j7} and tedious computation we obtain that
 \begin{equation*}
  \begin{split}
\tilde{L}X &\leq I^{-\frac{4\alpha-1}{2} }A +\frac{I^{-\frac{4\alpha -1}{2}}}{(1+2\gamma )^2(g+ g_x ^2 )^2K_g ^2}\Big{ \{ }-B_1 g^2 (g^2_x+g) g^2_{xxy}-g^3B_1(g_{xxx}+B_{11})^2 \\
&\quad+\big((1+2\gamma)^2 (g+g_x ^2)^2 K_g^2\big) \frac{E_1 }{E_2 }\Big{  \} } \\
&\quad -\frac{8\gamma (\gamma +1)}{(1+2\gamma )^2} I^{-\frac{4\alpha+1}{2}} K_g ^{\alpha -1}g^{2\gamma +3}\frac{g_x}{g+g_x ^2 }\Big{\{} ((\gamma +1)g_x ^2 + gg_{xx})^2-(g+g_x ^2)K_g \Big{\}}g_{xxx}\\
&\quad  + g^{2\gamma +1}\cdot l.o.t. +  I^{-\frac{4\alpha+1}{2}} K_g ^{\alpha} g_{xx} \\
&\leq I^{-\frac{4\alpha-1}{2} }A +\frac{I^{-\frac{4\alpha -1}{2}}}{(1+2\gamma )^2(g+ g_x ^2 )^2 K_g ^2}\Big{ \{ }-B_1g^2 (g^2_x+g) g_{xxy} ^2 -g^3 B_1 (g_{xxx}+B_{11} + O(g))^2\\
&\quad +\big((1+2\gamma)^2 (g+g_x ^2)^2 K_g^2\big) \frac{E_1}{E_2} + O(g)\Big{  \} } + O(g) +  I^{-\frac{4\alpha+1}{2}} K_g ^{\alpha} g_{xx} .
 \end{split}
      \end{equation*}
Here $O(g)$ denotes various terms satisfying $|O(g)| \leq Cg$ with constant $C$. We can know the first term and the second term are negative as in the case of $LX$ and provided that $X \geq C$ is sufficiently large. And then $\tilde{L}X\leq C$ with $C$ depending on $||f||_{C^{1,1}}$ and $||g||_{C^1}$ on $g \leq 1$, which implies that $(X-Ct)_t \leq 0$. Applying the evolution of $\tilde{X}=X-Ct$  with a simple trick implies $
\tilde{X} \leq C$ where $C$ is positive constant. This immediately gives the desired contradiction.
\end{proof}

\subsection{Aronson-B\'{e}nilan type Estimate}
\begin{lem}\label{lem-AB}
Under the assumptions of Theorem \ref{thm-1.4} and condition
\eqref{co1},  there exists a constant  $C >0$
for which
$$ \det (D^2 g) \geq -C$$
for a uniform constant $C>0$.
\end{lem}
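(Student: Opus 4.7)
The plan is to apply the maximum principle to $W := -\det D^2 g$, in parallel with the proof of Lemma~\ref{lem-43}. Setting $K_g := g \det D^2 g + \theta \mathcal{Q}(g)$ with $\mathcal{Q}(g) := g_y^2 g_{xx} - 2 g_x g_y g_{xy} + g_x^2 g_{yy}$ and $I := 1 + g^{2\beta-2}|Dg|^2$, equation \eqref{eqn-g} reads $g_t = K_g^\alpha / I^{(4\alpha-1)/2}$. Corollary~\ref{cor-42} gives $0 < c \leq K_g \leq c^{-1}$, while Lemma~\ref{lem-31} together with Lemma~\ref{lem-43} gives $0 \leq \mathcal{Q}(g) \leq C$. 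The identity $g \det D^2 g = K_g - \theta \mathcal{Q}(g)$ therefore yields only $\det D^2 g \geq -C/g$, which degenerates at the interface, so the uniform bound requires a maximum-principle argument.

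At $t = 0$, $W$ is bounded by the assumed $C^{2+\gamma}_s$ regularity of $g$. On the set $\{g \geq 1\}$, $W$ is bounded by the interior smoothness of $g$ away from the interface. At the free boundary $\{g = 0\}$, the balance $K_g|_{g=0} = \theta \mathcal{Q}(g)|_{g=0} = g_t^{1/\alpha}$ together with $\mathcal{Q}(g) = |Dg|^2 g_{\tau\tau}$ in the rotated frame $g_y = 0$ provides a closed-form expression for $D^2 g|_{g=0}$, and hence an explicit bound on $W|_{g = 0}$. It therefore suffices to preclude an interior positive maximum of $W$ at a point $P_0$ with $g(P_0) \in (0, 1)$.

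At such a $P_0$, rotate coordinates so that $g_y = 0$, $g_x > 0$, and apply the shear of Lemma~\ref{lem-43} to ensure $g_{xy} = 0$ at $P_0$. Differentiating \eqref{eqn-g} twice in space via Jacobi's identity $\partial_t \det D^2 g = (\operatorname{cof} D^2 g)^{ij} \partial_t g_{ij}$, and substituting the critical-point relations $\nabla W = 0$, $D^2 W \leq 0$ to eliminate the third derivatives of $g$, one derives, after a computation parallel to \eqref{eqn-j7}--\eqref{eqn-j9} in Lemma~\ref{lem-43}, an inequality of the form
\begin{equation*}
\tilde L W \leq -\delta W^2 + C_{*},
\end{equation*}
where $\tilde L$ is the natural degenerate linearization of \eqref{eqn-g}, $\delta > 0$, and $C_{*}$ depends only on $\rho_0$, $\|g\|_{C^1}$, and the bounds of Lemmas~\ref{lem-uK}--\ref{lem-43}. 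Since $\tilde L W \geq 0$ at the interior maximum, this forces $W(P_0)^2 \leq C_{*}/\delta$, giving the required bound $\det D^2 g \geq -C$.

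The main obstacle is the derivation of the displayed inequality. As emphasized in the paragraph following Theorem~\ref{thm-1.4}, the $(1-\alpha)$-dependent error terms produced by the factors $K_g^\alpha$ and $I^{-(4\alpha-1)/2}$ are numerous, and absorbing them into the leading $-\delta W^2$ requires the rescaling $g(x,t) \mapsto \lambda^{-1/(\theta+2)} g(\lambda x, \lambda^{(4\gamma+3)/(2\gamma+1)} t)$ used at the end of Lemma~\ref{lem-43} to make $K_g^{\gamma/(1+2\gamma)}$ arbitrarily small near the interface, together with the possible incorporation of auxiliary correction terms in the definition of the test quantity $W$, analogous to the role of $g\Delta g + \theta|Dg|^2$ in the quantity $X$ of that lemma.
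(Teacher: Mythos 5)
Your plan differs from the paper's in a way that is more than cosmetic, and the crucial step is asserted rather than derived. The paper does not apply the maximum principle to $W=-\det D^2 g$; it uses the quantity
$$Z=\frac{\det D^2 g}{g_x^2 g_{yy}+g_y^2 g_{xx}-2g_x g_y g_{xy}}+b\,|\nabla g|^2,$$
shows that $Z_t>0>-aZ$ at an interior \emph{minimum} of $Z$ (after the same shear that gives $g_y=g_{xy}=0$), and then concludes by Gronwall that $Z\ge Z_0 e^{-\tilde a t}$. The normalization by the Monge--Amp\`ere form $\mathcal Q(g)$ and the additive $b|\nabla g|^2$ term are not auxiliary corrections that one might or might not need: in the paper's computation the lowest-order block $A_0 = A_{0,0}g^2 b^4 + E_0 + A_{0,5}(\alpha-1)g^3 b^5 + (\alpha-1)E_1$ has its sign controlled precisely by choosing $b\gg 1$, which produces the dominant powers $b^4$ and $b^5$. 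If you replace $Z$ by $-\det D^2 g$ (or even by $-\det D^2 g$ plus a fixed gradient term), those tunable powers of $b$ disappear, and it is not clear how to dominate the $(\alpha-1)$-dependent error blocks $E_0,E_1$ in \eqref{eqn-Z}. Since $\mathcal Q(g)$ is pinched between $c$ and $c^{-1}$ near the interface, the two quantities are comparable, so in principle a bound on one gives a bound on the other; but the evolution equations are not the same, and the paper's specific choice is what makes the sign analysis close.

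The concrete gap is the displayed inequality $\tilde L W \le -\delta W^2 + C_*$: you state it is obtained ``after a computation parallel to \eqref{eqn-j7}--\eqref{eqn-j9}'' and then hedge that one may need to add ``auxiliary correction terms'' to $W$ — but that correction is precisely where the lemma lives. There is no verification that the leading $-\delta W^2$ term survives after the Jacobi-identity expansion and the substitution of the critical-point relations, especially given the $(1-\alpha)$-dependent terms coming from $K_g^{\alpha-1}$ and $I^{-(4\alpha-1)/2}$ that the paper has to handle through $A_{0,5}$ and the positivity of $(4\alpha-2)K_g+(\alpha-1)\alpha g_x^2 g_{yy}$ in \eqref{denom}. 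Your boundary reduction (bounding $W$ at $t=0$, on $\{g\ge 1\}$, and at $\{g=0\}$) is fine, and the shear/rotation normalization is the right setup, but without carrying out the evolution computation for the specific test quantity you propose, the proof does not close.
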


\begin{proof}
To establish the bound of  $ \det (D^2 g) $ from below, we will use the maximum principle on
the quantity
$$Z=\frac{\det D^2 g}{g_x ^2 g_{yy} + g_y ^2 g_{xx}-2g_x g_y g_{xy}}+ b\mid \nabla g \mid ^2$$
with some positive constant  $b$ on $\{ \, g(\cdot,t) >0, \, 0
\leq t \leq T \, \}.$
Let us assume that $Z$ becomes minimum at the interior point $P_0$.
We can assume $g_y = 0 $, $g_x >0$ and $g_{xy}=0$ at $P_0 $ by using similar transformation and the change of coordinate in Lemma \ref{lem-43} at $P_0$.
Then we have
\begin{equation*}
\begin{split}
a_{ij}Z_{ij} \leq 0
\end{split}
\end{equation*}
for
\begin{equation}\label{diffusion-coeff}
\begin{split}
(a_{ij})= \left[
\begin{array}{clc}
\alpha K_g^{\alpha-1}gg_{yy}(1+g^{\frac{2\alpha}{2\alpha -1}}g^2_x)^{\frac{1-4\alpha}{2}} & \quad\quad\quad \quad\quad\quad\quad  0 \\
0&\alpha K_g^{\alpha-1}(gg_{xx}+\theta g^2_x)(1+g^{\frac{2\alpha}{2\alpha -1}}g^2_x)^{\frac{1-4\alpha}{2}}\\
\end{array}
\right]
\end{split}
\end{equation}
and $Z_x = Z_y = 0$ at the minimum point $P_0$ implying that \\
\begin{equation}\label{eqn-Z}
\begin{split}
Z_t &\geq B_1 (g_{xyy}+B_2)^2 + A_0 + \frac{1}{(\al +1)(2\al -1)^3}O(g)Z + O(Z^2) \\
\end{split}
\end{equation}
where 
\begin{equation*}
\begin{split}
 B_1=\frac{\al}{(1-2\al)^2 g_x ^2 g_{yy}}&\big( 1+g^{\frac{2\al}{2\alpha -1}} g_x ^2 \big)^{\frac{1}{2}-2\alpha} K_g ^{\alpha -2}\big[ \al g_x ^2 + (2\al -1)g g_{xx}\big]\\
\cdot &\Big[ (4\alpha -2)K_g + (\alpha -1)\big{\{}\alpha g_x ^2 +(2\al -1)gg_{xx}\big{\}}g_{yy} \Big]
 \end{split}
\end{equation*}
 and 
 \begin{equation*}
\begin{split}
A_0 = A_{0,0}g^2 b^4+E_0+A_{0,5}(\al-1)g^3 b^5 + (\al-1)E_1
 \end{split}
\end{equation*}
with
\begin{equation}\label{eqn-A00}
\begin{split}
A_{0,0}=\frac{g_x ^{10} g _{yy}(24+12 g^2  g_x ^2 -21 g^4  g_x ^4)}{2(1+g^2  g_x ^2 )^{7/2}}
\end{split}
\end{equation}
and
\begin{equation*}
\begin{split}
A_{0,5}=-\frac{30 g^{1+\frac{1}{2\alpha -1}}(1-2\alpha)^2 g_x ^{14}(g^{\frac{2\alpha}{1-2\alpha}} + g_x ^2)(1+g^{1+\frac{1}{2\alpha -1}} g_x ^2)^{-\frac{1}{2}-2\alpha} K_g ^{\alpha -1}g^2 _{yy}}{(4\alpha -2)K_g + (\alpha -1)\alpha g_x ^2 g_{yy}}
\end{split}
\end{equation*}
where $E_0=O(b^3, g^2)$ and $E_1=O(b^4, g^3)$.
Here we can also show  $A_{0,0} \geq \delta _1 (g_x ,  g_{yy}) > 0$  uniformly and we have
\begin{equation}\label{denom}
\begin{split}
(4\alpha -2)K_g + (\alpha -1)\alpha g_x ^2 g_{yy} &=(4\alpha -2)(gg_{xx}g_{yy}+\theta g_x ^2 g_{yy})+(\alpha -1)\alpha g_x ^2 g_{yy} \\
&=(4\alpha -2)gg_{xx}g_{yy} + \{\theta (4\alpha -2)+\alpha ^2 -\alpha\}g_x ^2 g_{yy}\\
&>0  
\end{split}
\end{equation}
on $g \leq 1$ since $\displaystyle \frac{1}{2} < \alpha \leq 1$. Then $(\al -1)A_{0,5} $ is nonnegative so that $A_0$ is positive for sufficiently large $b\gg 1$. Also we can know $B_1$ is positive on $g \leq 1$ from \eqref{denom}.
This implies $$Z_t > 0 > -aZ$$ with a positive constant $a $.
By Gronwall's inequality, we have  $$Z\geq Z_0 e ^{-\tilde{a} t} $$
where $Z_0 $ is initial data of $Z $ at $ t=0$ and $\tilde{a}$ is constant, which concludes the proof.
\end{proof}

\subsection{Global Optimal Regularity}
Let's consider the quantity
\begin{equation}\label{eqn-43-1}
\cZ=\max_\gamma(gD_{\gamma\gamma}g + \theta|D_\gamma g|^2).
\end{equation}
Now we will show that $\cZ$ is bounded from above through next lemma.
\begin{lem}\label{lem-43-1}
With the same assumptions of Theorem \ref{thm-1.4} and condition
\eqref{co1},  there exists a positive constant  $C=C(\theta,\rho,\lambda,\| g\|_{C^2 ({\partial \Omega})}) $
with
$$\max_{\varOmega(g)}  \cZ \leq C$$
where $\varOmega(g) = \{x|g(x) > 0\}$.
\end{lem}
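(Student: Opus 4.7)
My plan is to apply the parabolic maximum principle to $\cZ$ directly, mirroring the strategy in the proof of Lemma~\ref{lem-43}. The key observation is that $\cZ$ is the largest eigenvalue of the symmetric matrix
\[
M := g\, D^2 g + \theta \, Dg \otimes Dg,
\]
so once we have control on the parabolic boundary of $\varOmega(g)\times (0,T]$ and on a single scalar auxiliary function, the bound on $\cZ$ follows. On the free boundary $\{g=0\}$ we have $\cZ = \theta |Dg|^2 \le \theta C_0^2$ by Lemma~\ref{lem-31}. On the outer boundary of the tracked domain (e.g.\ $\{g=1\}$, where $g$ is smooth by Theorem~\ref{thm-1.3} together with interior regularity of the strictly convex part) the bound follows from the hypothesis $\|g\|_{C^2(\partial\Omega)} < \infty$, and at $t=0$ from the initial data of Theorem~\ref{thm-1.4}.

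Assume for contradiction that $\cZ$ attains an interior space-time maximum at a point $P_0 = (x_0,y_0,t_0)$ with $\cZ(P_0)$ arbitrarily large. Rotate the $(x,y)$-coordinates so that $e_x$ is the eigenvector of $M(P_0)$ corresponding to its largest eigenvalue, and define the scalar function
\[
Y(x,y,t) := g\, g_{xx} + \theta \, g_x^2.
\]
Then $Y \leq \cZ$ pointwise with $Y(P_0) = \cZ(P_0)$, and the vanishing of the off-diagonal entry of $M(P_0)$ gives the algebraic relation
\[
g\, g_{xy} + \theta\, g_x g_y = 0 \qquad \text{at } P_0.
\]
The scalar $Y$ therefore also attains its space-time maximum at $P_0$, yielding the critical conditions $Y_x = Y_y = 0$, $Y_{xx}, Y_{yy} \le 0$, and $Y_t \ge 0$ there.

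Now differentiate the equation \eqref{eqn-g} twice in $x$ and assemble the evolution of $Y$. As in the proof of Lemma~\ref{lem-43} this produces a parabolic equation of the form $\tilde L Y = \text{error}$, with the same degenerate operator $\tilde L$. The critical conditions $Y_x = Y_y = 0$ let us solve for the third derivatives $g_{xxx}$ and $g_{xxy}$ in terms of second derivatives, as in \eqref{eqn-j4}; after substituting back, the third-derivative contributions collapse into a negative-definite perfect square, while the remaining lower-order terms are controlled using the bounds $g_{\tau\tau} \leq C$ of Lemma~\ref{lem-43}, $\det D^2 g \geq -C$ of Lemma~\ref{lem-AB}, the gradient bound $c \leq g_\nu \leq c^{-1}$ of Lemma~\ref{lem-31}, Corollary~\ref{cor-42}, and the identity $g\, g_{xy} + \theta g_x g_y = 0$ derived above. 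After the preliminary rescaling $g_\lambda(x,t) = \lambda^{-1/(\gamma+2)} g(\lambda x, \lambda^{(4\gamma+3)/(2\gamma+1)} t)$ used in Lemma~\ref{lem-43} (which forces $K_g^{\gamma/(1+2\gamma)}$ to be small near $P_0$), the quadratic-in-$Y$ leading contribution dominates and yields $Y_t(P_0) < 0$, contradicting $Y_t(P_0) \ge 0$. Hence $Y(P_0)$, and therefore $\cZ(P_0)$, is bounded by a constant depending only on $\theta$, $\rho$, $\lambda$, and $\|g\|_{C^2(\partial\Omega)}$.

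The main obstacle is the algebraic bookkeeping, exactly as in Lemmas~\ref{lem-43} and \ref{lem-AB}. The $(1-\alpha)$-terms produced by differentiating the fully nonlinear right-hand side of \eqref{eqn-g} proliferate, and the shear-coordinate trick \eqref{eqn-j31-1}, which reduced Lemma~\ref{lem-43} to the case $g_{xy}(P_0) = 0$, is no longer available here: the principal-direction condition only gives the weaker relation $g g_{xy} + \theta g_x g_y = 0$. One must therefore retrace the expansions \eqref{eqn-j7}--\eqref{eqn-j9} and verify that the sign-critical coefficients (the analogues of $B_1$, $C_8$, and $E_2$) remain strictly positive under this weaker substitution; this is where the argument is genuinely delicate.
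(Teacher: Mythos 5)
Your plan has the right skeleton --- reduce $\cZ$ to a scalar quantity at the extremal point, invoke critical conditions, and compute a sign --- and you correctly flag the place where it breaks down, but that flag \emph{is} a genuine gap, and the paper resolves it with a case analysis that your proposal does not contain.

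The issue: rotating so that $e_x$ is the principal eigenvector of $M(P_0)$ does \emph{not} give you $g_y(P_0)=0$, because the principal direction of $M$ is generally not the gradient direction $\nu$. You therefore only obtain the off-diagonal relation $g\,g_{xy}+\theta g_x g_y=0$, and without $g_y=0$ this does not force $g_{xy}=0$, nor does it make $g_{xxy}$ vanish from the critical equation $\cZ_y=0$. The third-derivative bookkeeping then does not collapse the way it does in Lemma~\ref{lem-43}, and you acknowledge this honestly but leave it open. The paper sidesteps the problem entirely by a dichotomy at $P_0$: if $g\,g_{\nu\nu}$ is not large compared with $\theta g_\nu^2$, then $\cZ(P_0)$ is already bounded by the existing lemmas and there is nothing to prove; otherwise $\theta g_\nu^2 \le g\,g_{\nu\nu}$, and combining the bound $g_{\tau\tau}\le C$ from Lemma~\ref{lem-43} with the rewritten PDE \eqref{eqn-43-max} yields $g_{\nu\tau}\le C\sqrt{g_{\nu\nu}}$, which forces the maximizing direction $\gamma$ in \eqref{eqn-43-1} to be $\nu$ (i.e.\ $\lambda_2=0$). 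Once $\gamma=\nu$, the first-order optimality condition $g\,g_{\nu\tau}+\theta g_\nu g_\tau = 0$ together with $g_\tau=0$ gives $g_{\nu\tau}=0$ for free. Rotating so that $\nu=e_x$ then gives $g_y=0$, $g_x>0$, and $g_{xy}=0$ simultaneously, and the evolution of $\cZ_t$ at $P_0$ reduces to a manageable expression in which the leading coefficient carries the factor $(1-4\alpha)<0$. No rescaling trick and no analogues of $B_1$, $C_8$, $E_2$ are needed; the computation is direct once $g_y=g_{xy}=0$ hold. To repair your proposal, replace the unconditional rotation to the principal eigendirection with this dichotomy showing that, in the only nontrivial case, that eigendirection must coincide with the normal.

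Two smaller omissions: (i) the paper first records $\cZ\ge 0$ via $\cZ = \beta^{(2-\beta)/\beta} f^{(2-\beta)/\beta} f_{\gamma\gamma}$ and convexity, and (ii) the sign conclusion from the maximum principle requires $\cZ_t(P_0)<0$ for $\cZ(P_0)$ large, which the paper obtains from the negative $(1-4\alpha)\cZ^{2+\alpha} g^{1/(2\alpha-1)}$ leading term dominating the lower-order remainders on $g\le 1$; your write-up should record why this term dominates.
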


\begin{proof}
First, we know that $\cZ$ is nonnegative from $\cZ=\beta^{\frac{2-\beta}{\beta}}f^{\frac{2-\beta}{\beta}} f_{\gamma \gamma}$ and a convexity of $f$. Also Lemma \ref{lem-31} implies 
$$ \cZ \leq C(\theta,\rho,\lambda,\| g\|_{C^2 ({\partial \Omega})}) \quad \text{at} \,\, g=0,$$
since $\cZ=\theta|D_\gamma g|^2$ at the free-boundary $g=0$.
Then we can assume that $\cZ$ has the its maximum at an interior point $P_0 \in \Omega (g)$ and at a direction $\gamma$. To show the bound of $\cZ$, we consider $\gamma$ as $\gamma=\lambda _1 \nu + \lambda _2 \tau$ with $\lambda ^2_1 + \lambda ^2_2 =1$, where $\nu , \, \tau$ denote the outward normal and tangential directions to the level sets of $g$ respectively. Then $\cZ(P_0)=gD_{\gamma\gamma}g + \theta|D_\gamma g|^2 $ and \eqref{eqn-g} can be rewritten as 
\begin{equation}\label{eqn-43-max}
\begin{split}
 &\cZ(P_0)=g[\lambda ^2_1 g_{\nu \nu}+2\lambda _1 \lambda _2 g_{\nu \tau}+\lambda ^2_2 g_{\tau \tau}]+\lambda ^2_1 g^2_{\nu}\\
 \text{and }\,\, &(gg_{\nu \nu}+\theta g^2_{\nu})g_{\tau \tau}=gg^2_{\nu \tau}+\{g_t (1+g^{2\beta -2}g^2_{\nu})^{\frac{4\al -1}{2}}\}^{\frac{1}{\alpha}}.
\end{split}
\end{equation}
Here if $gg_{\nu \nu}$ is not sufficiently large at $P_0$, we have 
$$\cZ(P_0)\leq C(\theta,\rho,\lambda,\| g\|_{C^2 ({\partial \Omega})})$$
from Lemmas \ref{lem-31}, \ref{cor-42} and \ref{lem-43} implying the desired result immediately.
On the other hand, if $\theta g^2_{\nu} \leq gg_{\nu \nu}$ at $P_0$, then we get $$gg^2_{\nu \tau} \leq 2gg_{\nu \nu}g_{\tau \tau} \leq C(\theta,\rho,\lambda,\| g\|_{C^2
({\partial \Omega})})gg_{\nu \nu}$$
implying $g_{\nu \tau} \leq C(\theta,\rho,\lambda,\| g\|_{C^2({\partial \Omega})}) \sqrt{g_{\nu \nu}}$. Then we can know that $\cZ(P_0)$ is maximum when $\lambda _2 =0$ from Lemma \ref{lem-43} and \eqref{eqn-43-max}
so that $\cZ(P_0)=gg_{\nu \nu} + \theta g^2_{\nu}$. Also we get $gg_{\nu \tau}+\theta g_{\nu} g_{\tau}=0$ at $P_0$ 
implying that $g_{\nu \tau}=0$ at $P_0$.
 Here by the similar transformation in Lemma \ref{lem-43}, we can assume 
\begin{equation*}\label{eqn-j311}
g_\tau= g_y=0 , \quad g_\nu =
g_x >0 \quad \mbox{and} \quad g_{xy}=0 \quad \mbox{at \, $P_0$} .
\end{equation*}
Then we have
$$a_{ij}\cZ_{ij} \leq 0$$
with \eqref{diffusion-coeff}
at the maximum point $P_0$. And since $g_{xxx}=\frac{-g_x g_{xx}-2\theta g_{x}g_{xx}}{g}$ and $g_{xxy}=0$ at $P_0$, we obtain
\begin{equation*}
\begin{split}
\cZ_t &= g_t g_{xx}+ gg_{xxt} + 2\theta g_x g_{xt}\\
&\leq -(1+g^{\frac{2\alpha}{2\alpha-1}}g^2_x)^{-\frac{1+4\alpha}{2}}[(1-2\alpha)^2(\alpha-1)(g^{\frac{1}{1-2\alpha}}+gg^2_x)]^{-1}[g_{yy}(\theta g^2_x +gg_{xx})]^{\alpha}\\
&\quad \cdot [g^{\frac{2\alpha}{2\alpha-1}}\alpha(4\alpha-1)(2\alpha (2\alpha \theta -3\theta +\alpha +1)+2\theta -1)g^6_x \\
&\quad+ (1-2\alpha)^2(\alpha-1)g_{xx}\{g^{\frac{1}{1-2\alpha}}(\alpha-1)+(4\alpha-1)g^2 g_{xx}\} \\
&\quad+ (2\alpha-1)gg^2_{x}g_{xx}\{(\alpha-1)(16\alpha ^2-5\alpha +1)+ 3\alpha (8\alpha^2 -6\alpha +1)g^{\frac{4\alpha-1}{2\alpha-1}}g_{xx}\}\\
&\quad+\alpha g^4_x\{(4\alpha^2-5\alpha+1)(\theta (4\alpha -2)+1)+6\alpha(10\alpha^2-9\alpha +2)g^{\frac{4\alpha-1}{2\alpha-1}}g_{xx} \}]
\end{split}
\end{equation*}
at the point $P_0$. Also from 
$g_{xx}=\frac{\cZ-\theta g^2_x}{g}$, we have
\begin{equation*}
\begin{split}
\cZ_t &\leq \frac{(1+g^{\frac{2\alpha}{2\alpha-1}}g^2_x)^{-\frac{3+4\alpha}{2}}(\cZ g_{yy})^{\alpha} }{g(\alpha-1)(2\alpha-1)}\Bigg[-\cZ(2\alpha^2-3\alpha +1)\big{\{}(4\alpha-1)\cZ g^{\frac{2\alpha}{2\alpha-1}}+\alpha-1\big{\}}\\
&\quad+g^{\frac{4\alpha}{2\alpha-1}}g^2_x\big{\{}g^{\frac{4\alpha}{1-2\alpha}}(\alpha-1)^2 \alpha -3\alpha (8\alpha^2 -6\alpha +1)\cZ^2 - (\alpha-1)(8\alpha^2-3\alpha +1)\cZ g^{\frac{2\alpha}{1-2\alpha}}\\
&\quad+\alpha (\alpha -1)g^2_x(2(\alpha-1)g^{\frac{2\alpha}{1-2\alpha}}-6\alpha \cZ + (\alpha-1)g^2_x)\big{\}}\Bigg]\\
&\leq (1-4\alpha)(1+g^{\frac{2\alpha}{2\alpha-1}}g^2_x)^{-\frac{3+4\alpha}{2}} g_{yy} ^{\alpha}  \cZ^{2+\al }g^{\frac{1}{2\alpha-1}}+O(\cZ ^{2+\al})g^{\frac{2\alpha+1}{2\alpha-1}}+O(\cZ^{1+\al },g^{\frac{1}{2\alpha-1}}).
\end{split}
\end{equation*}
Then on $g\leq 1$, $\cZ_t \leq 0 $ at $P_0$ since $1-4\alpha <0$. Hence we can obtain the desired result.
\end{proof}

\subsection{ Decay rates of Second Derivatives}

\begin{corollary}\label{cor-44}
With the same assumptions of Lemma \ref{lem-43}, there exist a positive constant $c =c(\rho_0,f_0)$  such that
\begin{enumerate}
\item $ c \leq  g_{\tau\tau} \leq c^{-1} $
\item $c\, \leq f_{\nu\nu},\, \frac{f_{\tau\tau}}{g^{\beta-1}}\, \leq   c^{-1}
\quad \text{and}\,\frac{|f_{\nu\tau}|}{ g^{(\beta-1)/2}}\leq   c^{-1}$
\end{enumerate}

with $\tau$ denoting the tangential direction to the level sets of $g$.
\end{corollary}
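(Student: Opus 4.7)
The plan is to read off part (1) directly from the PDE \eqref{eqn-g} once a tangential--normal frame is introduced, and then to obtain part (2) by applying the chain rule to $f = \beta^{-1}g^{\beta}$ and invoking the previous lemmas on the resulting $g$-derivative combinations.

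At a point $P_{0}$ in the strictly convex region, I rotate coordinates so that $g_{\tau}=0$ and $g_{\nu}>0$. In this frame the quadratic form in the numerator of \eqref{eqn-g} collapses to $\cZ\,g_{\tau\tau}-g\,g_{\nu\tau}^{2}$, where $\cZ = g g_{\nu\nu}+\theta g_{\nu}^{2}$ is the quantity controlled in Lemma \ref{lem-43-1}, so that \eqref{eqn-g} becomes
\begin{equation*}
g_{t}\;=\;\frac{[\cZ\,g_{\tau\tau}-g\,g_{\nu\tau}^{2}]^{\alpha}}{(1+g^{2\beta-2}|\D g|^{2})^{(4\alpha-1)/2}}.
\end{equation*}
The upper bound on $g_{\tau\tau}$ is exactly Lemma \ref{lem-43}. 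For the lower bound, Corollary \ref{cor-42} gives $g_{t}\geq c>0$ and Lemma \ref{lem-31} bounds the denominator, so $\cZ\,g_{\tau\tau}-g\,g_{\nu\tau}^{2}\geq c'>0$. Dropping the nonnegative cross term $g\,g_{\nu\tau}^{2}$ and using $\cZ\leq C$ from Lemma \ref{lem-43-1} yields $g_{\tau\tau}\geq c'/C$, proving (1).

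For (2), a direct computation at $P_{0}$ gives
\begin{equation*}
f_{\nu\nu}=g^{\beta-2}\cZ,\qquad f_{\tau\tau}=g^{\beta-1}g_{\tau\tau},\qquad f_{\nu\tau}=g^{\beta-1}g_{\nu\tau}.
\end{equation*}
The two-sided bound for $f_{\tau\tau}/g^{\beta-1}=g_{\tau\tau}$ is immediate from part (1). For $f_{\nu\nu}$ I use both bounds on $\cZ$: the upper bound is Lemma \ref{lem-43-1}; the lower bound follows from $\theta g_{\nu}^{2}\geq \theta c_{0}^{2}$ (Lemma \ref{lem-31}) together with $g g_{\nu\nu}\geq -Cg$, itself a consequence of the Aronson--B\'enilan estimate (Lemma \ref{lem-AB}) combined with part (1) through $g_{\nu\nu}\geq (g_{\nu\tau}^{2}-C)/g_{\tau\tau}\geq -C/c$. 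Finally, the bound $|f_{\nu\tau}|/g^{(\beta-1)/2}\leq C$ amounts to $g^{\beta-1}g_{\nu\tau}^{2}\leq C$; Lemma \ref{lem-AB} gives $g_{\nu\tau}^{2}\leq g_{\nu\nu}g_{\tau\tau}+C$, which multiplied by $g^{\beta-1}$ and combined with $g g_{\nu\nu}\leq \cZ\leq C$, $g_{\tau\tau}\leq C$, and $\beta\geq 2$ on $\frac12<\alpha\leq 1$, is bounded on $\{g\leq 1\}$.

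The main obstacle is the cross derivative $g_{\nu\tau}$: no earlier lemma bounds it directly; only its coupling to $g_{\nu\nu}$ through the determinant is under control. The Aronson--B\'enilan estimate (Lemma \ref{lem-AB}) is exactly the tool that allows the term $g\,g_{\nu\tau}^{2}$ to be absorbed, both trivially in the proof of the lower bound on $g_{\tau\tau}$ (where it simply drops out after positivity of the bracket is established) and in the translation to bounds on $f_{\nu\tau}$ and $f_{\nu\nu}$; everything else is a routine chain-rule computation together with citations of the previously established $g$-estimates.
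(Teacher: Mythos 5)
Your arguments for part (1) and for the $f_{\tau\tau}/g^{\beta-1}$ and $f_{\nu\tau}$ pieces of part (2) are correct, though they take visibly different computational routes from the paper. For the lower bound on $g_{\tau\tau}$ you rewrite \eqref{eqn-g} at $P_0$ as $g_t = [\cZ\, g_{\tau\tau} - g\, g_{\nu\tau}^2]^{\alpha}/(1+g^{2\beta-2}|\D g|^2)^{(4\alpha-1)/2}$ and combine $g_t\geq c$ (Corollary \ref{cor-42}), the bounded denominator (Lemma \ref{lem-31}), and $\cZ\leq C$ (Lemma \ref{lem-43-1}); the paper instead writes $\det D^2 f \geq c\,g^{2\beta-3}$ (Lemma \ref{lem-41}), drops the nonnegative $f_{\nu\tau}^2$, and divides by $f_{\nu\nu}\leq C g^{\beta-2}$. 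After unwinding $K_g=\cZ\,g_{\tau\tau}-g\,g_{\nu\tau}^2$ these are the same estimate, so this is a legitimate reformulation. For the $f_{\nu\tau}$ bound the paper uses in one line $f_{\nu\tau}^2\leq f_{\nu\nu}f_{\tau\tau}$ coming from the convexity of $f$; your detour through the Aronson--B\'enilan bound $\det D^2 g\geq -C$ (Lemma \ref{lem-AB}) also closes, but convexity of $f$ is the cheaper and more direct tool here.

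The genuine gap is in the lower bound for $f_{\nu\nu}$. You correctly establish that $\cZ=g\,g_{\nu\nu}+\theta g_\nu^2$ is bounded above and below by positive constants, and then pass through $f_{\nu\nu}=g^{\beta-2}\cZ$. But $\beta=\frac{3\alpha-1}{2\alpha-1}>2$ whenever $\alpha<1$, so $g^{\beta-2}\to 0$ at the interface and the lower bound on $\cZ$ yields only $f_{\nu\nu}\geq c\, g^{\beta-2}$, not the uniform $f_{\nu\nu}\geq c$ asserted in the statement. You cannot close this by sharpening the $\cZ$ estimates, since $f_{\nu\nu}\sim g^{\beta-2}\cZ$ with $\cZ\sim 1$ is the exact decay rate; the prefactor $g^{\beta-2}$ is the whole obstruction. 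The paper does not argue from $\cZ$ at this point. Its steps (iii)--(iv) instead work with the eigenvalues $\lambda_1\geq\lambda_2$ of $D^2 f$: using $c\leq\lambda_1\lambda_2/g^{2\beta-3}\leq c^{-1}$ (Lemma \ref{lem-41}) together with $\lambda_2\leq f_{\tau\tau}\leq c^{-1}g^{\beta-1}$ they control $\lambda_1$, and then compare $f_{\nu\nu}$ to $\lambda_1+\lambda_2$ using the fact that $f_{\tau\tau}=g^{\beta-1}g_{\tau\tau}$ is of strictly lower order than $f_{\nu\nu}$ near the interface. That determinant/eigenvalue argument is the ingredient your proposal omits; absent it, your route provably cannot produce a $g$-independent positive lower bound on $f_{\nu\nu}$.
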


\begin{proof}
(i.) The upper bound of $g_{\tau\tau}$ comes from Lemma \ref{lem-43}.
Now we are going to show the lower bound.
From Lemma \ref{lem-41}, we have
$$\det D^2 f \geq c \, g^\frac{1}{2\al-1}= c\,g^{2\beta-3}.$$
which implies
\begin{equation}
f_{\nu\nu}f_{\tau\tau} \geq c\, g^{2\beta-3}  + f_{\nu\tau}^2 \geq
c \, g^{2\beta-3}
\end{equation}\label{eqn-4.9}
and then
$$f_{\tau\tau} \geq \frac {c\, g^{2\beta-3}}{f_{\nu\nu}} \geq \tilde c \, g^{\beta-1}$$
since $f_{\nu\nu} \leq Cg^{\beta-2}$ from Lemma \ref{lem-43-1}.
Since $f_{\tau\tau} = g^{\beta-1}\,
g_{\tau\tau} + (\beta-1)g^{\beta-2}\, g_\tau^2 $, we conclude that
$$g_{\tau\tau} = \frac{f_{\tau\tau}}{g^{\beta-1}} \geq \tilde c,$$ for some
positive constant $\tilde c$ depending only on the initial data
and $\rho_0$.\\
(ii.) $f_{\tau\tau}= g^{\beta-1}\, g_{\tau\tau}$ and the bound on $g_{\tau\tau}$ tell us
$$ c \leq \frac{f_{\tau\tau}}{g^{\beta-1}} \leq c^{-1}.$$
(iii.) Third, we are going to show $$f_{\nu\nu} + f_{\tau\tau} \geq c. $$
 Let us denote by
$\lambda_1, \lambda_2$ the two eigenvalues of the matrix $\det D^2
f$ such that $\lambda_1 \geq \lambda_2$. Then, from Lemma \ref{lem-41}, we have
$$ c \leq \frac{\lambda_1 \lambda_2} {g^{2\beta-3}} \leq c^{-1}$$
and  $\lambda_2 \leq f_{\tau\tau} \leq c^{-1} \, g^{\beta-1}$,
implying that
$ c^{-1} g^{\beta-2} \geq \lambda_1 \geq c g^{\beta-2}$
for some positive constant $c$ since $ 2< \beta$. Hence,
$f_{\nu\nu} + f_{\tau\tau} \geq \lambda_1+\lambda_2 \geq c >0$
as desired.\\
(iv.) From (i) and Lemma \ref{lem-43-1}, we have
$$c\leq f_{\nu\nu}+ g^{\beta-1}\, g_{\tau\tau}\leq 2 f_{\nu\nu}\leq \sup |D^2f|<C$$
for a uniformly small $g$, which is true in a uniformly small neighborhood of free boundary. The convexity of $f$ says $0\leq f_{\nu\tau}^2 - f_{\nu\nu}\, f_{\tau\tau}$.  By using the bound
$f_{\nu\nu} \leq c^{-1}$,
 we obtain
$$f_{\nu\tau}^2 \leq f_{\nu\nu} \, f_{\tau\tau} \leq c^{-1} \, g^{\beta-1}$$
\end{proof}

\section{Higher Regularity}
\subsection{Local Change of Coordinates}
For any point $P_0= P_0(x_0,y_0,t_0)$ at the interface $\Gamma$ with $0 < t_0 \leq T$, let's assume that $n_0$ is the unit vector in the direction of the vector ${\bf P_0} =
\overline{OP_0}$ and $n_0$ satisfies
\begin{equation}\label{nvec}
n_0:=   \frac{\bf P_0}{|{\bf P_0}|}  = {\bf e_1}
\end{equation} 
by rotating the coordinates.
Then we will have the following Lemma as Lemma 4.6, \cite{DL3}

\begin{lem}\label{lem-46}
There exist positive constants $c $ and  $\eta$,  depending only on the initial data and the constant
$\rho_0$ in
\eqref{co1}, for which
\begin{equation*}\label{derb1}
c \leq g_x(P)\leq c^{-1} \quad \text{and }\quad c \leq f_{xx}(P) \leq c^{-1}
\end{equation*}
at all points $P=(x,y,t)$ with  $f(P) >0$, $|P-P_0| \leq
\eta$ and $t \leq t_0$ under \eqref{nvec}.
\end{lem}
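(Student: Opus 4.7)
The plan is to combine three ingredients: the uniform upper and lower bounds on $|Dg|$ from Lemma \ref{lem-31}, the frame-wise second derivative bounds of Corollary \ref{cor-44}, and a purely geometric observation that the outward unit normal $\nu_0$ to the interface $\Gamma_{t_0}$ at $P_0$ is uniformly transversal to $\Gamma_{t_0}$ in the sense that its $x$-component (after the rotation \eqref{nvec}) is bounded below by a constant depending only on $\rho_0$ and the diameter of the initial surface.

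First I would prove the geometric estimate. The flat side $\Sigma^1_{t_0}$ is convex and, by \eqref{co1}, contains the disc $D_{\rho_0}$ centered at the origin. Hence $P_0\in\partial\Sigma^1_{t_0}$ admits a supporting line $\ell$ with $\Sigma^1_{t_0}$ on one side, and since $D_{\rho_0}\subset\Sigma^1_{t_0}$ the distance from the origin to $\ell$ is at least $\rho_0$. If $\nu_0$ denotes the outward unit normal to $\Sigma^1_{t_0}$ at $P_0$, this gives $\nu_0\cdot P_0\ge\rho_0$ and therefore
\begin{equation*}
\nu_0\cdot n_0=\frac{\nu_0\cdot P_0}{|P_0|}\ge\frac{\rho_0}{|P_0|}\ge\frac{\rho_0}{R},
\end{equation*}
where $R$ is the uniform diameter bound on $\Sigma_t$, $0\le t\le T$. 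After the rotation \eqref{nvec} this reads $(\nu_0)_x\ge c_1:=\rho_0/R$.

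Next I would obtain the pointwise bounds at $P_0$. At the interface point $P_0$ the gradient $Dg$ is orthogonal to $\Gamma_{t_0}$, so $Dg(P_0)=|Dg(P_0)|\,\nu_0$, and Lemma \ref{lem-31} gives
\begin{equation*}
c_0 c_1\le g_x(P_0)=|Dg(P_0)|\,(\nu_0)_x\le C_0.
\end{equation*}
For $f_{xx}$ I decompose $e_1=(\nu_0)_x\nu_0+(\tau_0)_x\tau_0$ in the orthonormal frame $(\nu_0,\tau_0)$ associated to the level set of $g$ through $P_0$, so that
\begin{equation*}
f_{xx}=(\nu_0)_x^{\,2}f_{\nu\nu}+2(\nu_0)_x(\tau_0)_x f_{\nu\tau}+(\tau_0)_x^{\,2}f_{\tau\tau}.
\end{equation*}
Corollary \ref{cor-44} bounds $f_{\nu\nu}$ from below and above, while the $g^{(\beta-1)/2}$ and $g^{\beta-1}$ factors on $f_{\nu\tau}$ and $f_{\tau\tau}$ make the remaining terms negligible near $\Gamma$. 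Combined with $(\nu_0)_x\ge c_1$ this gives $c\le f_{xx}(P_0)\le c^{-1}$.

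Finally I would extend these pointwise bounds to a quantitative neighborhood. Under the hypotheses of Theorem \ref{thm-1.4}, $g$ is smooth up to the interface, so both $Dg$ and the unit normal $\nu(P)=Dg(P)/|Dg(P)|$ to the level set through $P$ are continuous up to $\Gamma$; together with Lemma \ref{lem-31}, this gives $\nu_x(P)\ge c_1/2$ and $c\le|Dg(P)|\le c^{-1}$ on some ball $|P-P_0|\le\eta$, and the Hessian decomposition together with Corollary \ref{cor-44} propagates the $f_{xx}$ bound to the same neighborhood. The main obstacle is choosing $\eta$ to depend only on $\rho_0$ and the initial data, uniformly in $P_0\in\Gamma_{t_0}$ and $t_0\in(0,T]$. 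This requires a uniform quantitative modulus of continuity for the unit normal to the level sets of $g$ up to the interface, which in turn rests on the uniform second derivative bounds of Lemmas \ref{lem-43}, \ref{lem-AB} and \ref{lem-43-1}. With these in hand the modulus depends only on $\rho_0$ and the initial data, and the pointwise inequalities at $P_0$ extend to the claimed neighborhood.
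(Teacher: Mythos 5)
The paper itself gives no proof here---it cites Lemma 4.6 of \cite{DL3} directly---so I will assess your argument on its own terms.

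Your pointwise analysis at $P_0$ is correct, and the geometric observation you lead with (that $D_{\rho_0}\subset\Sigma^1_{t_0}$ forces $\nu_0\cdot n_0\ge\rho_0/R$) is exactly the structural fact that makes the lemma true. The Hessian decomposition of $f_{xx}$ in the $(\nu_0,\tau_0)$ frame and the use of Corollary \ref{cor-44} are also the right computation. The weak spot is the final extension to the ball $|P-P_0|\le\eta$. You invoke ``a uniform quantitative modulus of continuity for the unit normal $\nu(P)=Dg/|Dg|$'' and ascribe it to Lemmas \ref{lem-43}, \ref{lem-AB}, \ref{lem-43-1}. But a Lipschitz modulus for $\nu$ is false near $\Gamma$: $|D\nu|$ is of size $\max(|g_{\nu\tau}|,g_{\tau\tau})/|Dg|$, and while $g_{\tau\tau}\le C$, the cross term only satisfies $g_{\nu\tau}^2\le g_{\nu\nu}g_{\tau\tau}+C\le C'/g$ (using $\det D^2g\ge -C$, $g_{\tau\tau}\le C$, and $g\,g_{\nu\nu}\le C$ from Lemma \ref{lem-43-1}), so $|g_{\nu\tau}|$ can blow up like $g^{-1/2}$. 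One can salvage a H\"older-$\tfrac12$ modulus by integrating $g^{-1/2}$ along a transversal path, but that step is not carried out and is more delicate than your sketch suggests.

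There is a cleaner route that removes the modulus-of-continuity issue entirely and keeps $\eta$ depending only on $\rho_0$ and the initial data: apply your own supporting-line argument at $P$ rather than at $P_0$. For any $P=(x,y,t)$ with $f(P)>0$ and $t\le t_0$, the level set $\{g(\cdot,t)=g(P)\}$ through $P$ is a convex curve enclosing $\{g(\cdot,t)=0\}\supset D_{\rho_0}$, so the same argument gives $\nu(P)\cdot (P/|P|)\ge\rho_0/|P|\ge\rho_0/R$. Since $|P_0|\ge\rho_0$ and $|P-P_0|\le\eta$, one has $|P/|P|-e_1|\le 2\eta/\rho_0$, hence $\nu_x(P)\ge\rho_0/R-2\eta/\rho_0\ge\rho_0/(2R)$ once $\eta\le\rho_0^2/(4R)$. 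Combined with $c_0\le|Dg(P)|\le C_0$ from Lemma \ref{lem-31} this yields the $g_x$ bound; combined with $g(P)\le C_0\eta$ (from $g(P_0)=0$ and the gradient bound) and the decay rates in Corollary \ref{cor-44}, it yields the $f_{xx}$ bound, all with $\eta$ and $c$ depending only on $\rho_0$ and the initial data. I'd recommend replacing your last paragraph with this.
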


\subsection{Class of Linearized Equation}
In this subsection, we are going to show our transformed function $h$ from $g$ near the free boundary satisfies the same class of operators considered at \cite{DL3} so that  all the results in [DL3] can be applied to our equation by using the similar methods.

We will assume, throughout this section,  that at time
$t=0$ the function $g=(\beta \, f)^{\frac{1}{\beta}}$ satisfies the
hypotheses of Theorem \ref{thm-1.4} and that $g$ is smooth up to the
interface on $0 \leq t \leq T$, where $T>0$ is such that condition
\eqref{co1} holds. 

We will state the results of uniform $C^{1,\gamma}_s$-estimate in [DL3]. The readers can see detailed proofs with reference to [DL3].

Let
$P_0=(x_0,y_0,t_0)$ be  a point on the interface   curve $\Gamma(t_0)$  at time
$t=t_0$,  $0<t_0 \leq T$. We may assume, without loss of generality, that $\tau
\leq t_0 \leq T$, for some $\tau >0$. Indeed, the short time regularity result
in Theorem \ref{thm-1.3} shows that solutions are smooth up to the interface on $0 \leq t \leq
2\tau$, for some $\tau$ depending only on the initial data.
We may also assume,
by rotating the coordinates, that at the point $P_0$, condition
\eqref{nvec} holds.
By Lemma \ref{lem-46}, $g_x(P) >0$ for all points $P=(x,y,t)$ with $t \leq
t_0$, sufficiently close to $P_0$ and then from (\ref{eqn-hhhh})(see in [DH], Section II),
\begin{equation}\label{eqn-h}
 h_t  = -\frac{\left\{ z\, ( h_{zz} \, h_{yy} -h_{zy}^2) -  \, \theta(\al) \, h_z h_{yy}\right\}^{\alpha}}
{\left\{z^{2(\beta-1)} + h_z^2+ z^{2(\beta-1)}
 \, h_y^2\right\}^{\frac{4\alpha-1}{2}}}, \qquad z
>0.
\end{equation}
Set $K_h= z\, ( h_{zz} h_{yy} -h_{zy}^2) - \theta(\al)\,h_z\, h_{yy}$ and $J= \,z^{2(\beta-1)} + h_z^2\,+ \,z^{2(\beta-1)} \,h_y^2$.
If we linearize this equation around $h$, we obtain the equation
\begin{equation}\label{eqn-hy}
\begin{split}
\tilde h_t &= \frac{\al \,K_h^{\al-1} }{J^\frac{4\al-1}{2}}
\left\{  - z \, h_{yy} \tilde h_{zz} + 2 z\, h_{zy} \, \tilde
h_{zy} + ( \, \theta(\al) \,  h_z - z\, h_{zz})\, \tilde
h_{yy}\right\}+\, \frac{( \, 4\al-1)\, z^{2(\beta-1)}\,K_h^\al \,
h_y}{J^\frac{4\al+1}{2}}\, \tilde h_y
 \\
&\quad+\,\frac{( \, 4\al-1)\,K_h^\al \,h_z +\al K_h^{\al -1}\theta(\al)(h_z ^2 + z^{2(\beta -1)}(1+h_y ^2))h_{yy}}{J^\frac{4\al+1}{2}}  \,
\tilde h_z .
\end{split}
\end{equation}
Let us denote by  $\mathcal B_\eta$ the box
$$\mathcal B_\eta =
\{ \, 0\leq z \leq \eta^2
, \,\, |y-y_0| \leq \eta,
\,\, t_0-\eta^2 \leq t \leq t_0 \,\, \}
$$
around  the point $Q_0=(0,y_0,t_0)$.  We can obtain a-priori
bounds on the matrix
\begin{equation*}
A= (a_{ij})  = \left (
\begin{split}
- \,h_{yy} &\quad \sqrt z \,\, h_{zy}\\
\sqrt z\, \, h_{zy} &\quad (  \, \theta(\al) \, h_z-z\, h_{zz})
\end{split}
\right )
\end{equation*}
and the coefficient
\begin{equation*}
b = \frac{( \, 4\al-1)\,K_h^\al \,h_z +\al K_h^{\al -1}\theta(\al)(h_z ^2 + z^{2(\beta -1)}(1+h_y ^2))h_{yy}}{J^\frac{4\al+1}{2}} .
\end{equation*}
Then we obtain the following.

\begin{lem}\label{lem-51}
There exist positive constants
$\eta$, $\lambda $ and $\nu$, depending only on the initial data
and the constant $\rho_0$ in \eqref{co1} such that 
\begin{equation*}
 \lambda \, |\xi|^2 \leq a_{ij}\, \xi_i\, \xi_j  \leq
\lambda^{-1}
\, |\xi|^2, \qquad \forall \xi \neq 0
\end{equation*} and
\begin{equation*}
|b| \leq \lambda^{-1} \,\,\, \text{and}\,\,\, b \geq \nu >0 \,\,\, \text{on the box }\, {\mathcal B}_\eta.
\end{equation*}
\end{lem}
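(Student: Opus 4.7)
The plan is to translate each claimed bound on $h$ into a statement about $g$ via the inverse-function relation $g(h(z,y,t),y,t)=z$, and then invoke the curvature estimates from Sections~3 and~4. I will first record the chain-rule identities, then handle the diffusion matrix $A=(a_{ij})$, and finally the drift coefficient $b$.

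By Lemma~\ref{lem-46} we have $g_x\geq c>0$ in a neighborhood of $P_0$, so the implicit function theorem produces a smooth inverse $h(z,y,t)$ of $g$ in the $x$-variable. Differentiating $g(h,y,t)=z$ yields
\begin{align*}
h_z=\frac{1}{g_x},\quad h_y=-\frac{g_y}{g_x},\quad h_{zz}=-\frac{g_{xx}}{g_x^3},\quad h_{zy}=\frac{g_{xx}g_y-g_x g_{xy}}{g_x^3},
\end{align*}
and a short calculation produces
\[
-h_{yy}=\frac{g_y^2 g_{xx}-2g_x g_y g_{xy}+g_x^2 g_{yy}}{g_x^3}=\frac{|\nabla g|^2\, g_{\tau\tau}}{g_x^3}.
\]
Since $|\nabla g|$, $g_x$, and $g_{\tau\tau}$ are all two-sided bounded by positive constants in a neighborhood of $P_0$ (Lemma~\ref{lem-31}, Lemma~\ref{lem-46}, Corollary~\ref{cor-44}), this gives $c\leq a_{11}=-h_{yy}\leq c^{-1}$ on $\mathcal{B}_\eta$ for $\eta$ small.

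For the $(2,2)$-entry,
\[
a_{22}=\theta(\alpha)h_z-z h_{zz}=\frac{\theta g_x^2+g\,g_{xx}}{g_x^3},
\]
so the upper bound $a_{22}\leq C$ follows from Lemma~\ref{lem-43-1} with $\gamma=e_1$ together with $g_x\geq c$. Rather than estimating $a_{12}=\sqrt z\,h_{zy}$ directly, I use the identity
\[
\det A= -h_{yy}\bigl(\theta(\alpha)h_z-zh_{zz}\bigr)-z h_{zy}^{\,2}= K_h,
\]
combined with the relation $K_h=K_g\cdot h_z^4$, obtained by substituting the dictionary above into $K_g=g\det D^2 g+\theta(g_x^2 g_{yy}+g_y^2 g_{xx}-2g_x g_y g_{xy})$. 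Since $K_g^\alpha=g_t(1+g^{2\beta-2}|\nabla g|^2)^{(4\alpha-1)/2}$ is squeezed between positive constants by Corollary~\ref{cor-42} and Lemma~\ref{lem-31}, and $h_z$ is two-sided bounded, one gets $c\leq\det A\leq c^{-1}$. For a symmetric $2\times 2$ matrix whose $(1,1)$-entry is bounded above and whose determinant is bounded below, one has $a_{22}\geq \det A/a_{11}\geq c>0$ automatically, and both eigenvalues of $A$ lie in the interval $[\det A/\operatorname{tr} A,\,\operatorname{tr} A]$, which is the desired uniform ellipticity on $\mathcal{B}_\eta$.

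For the drift, decompose
\[
b=\underbrace{\frac{(4\alpha-1)K_h^\alpha h_z}{J^{(4\alpha+1)/2}}}_{b_+>0}+\underbrace{\frac{\alpha\theta(\alpha)K_h^{\alpha-1}J\,h_{yy}}{J^{(4\alpha+1)/2}}}_{b_-<0}.
\]
The upper bound $|b|\leq\lambda^{-1}$ is immediate from the already-established bounds on $K_h$, $J$, $h_z$, and $h_{yy}$. For the positive lower bound, I evaluate at $z=0$: using $K_h|_{z=0}=-\theta(\alpha)h_z h_{yy}$ and $J|_{z=0}=h_z^2$, a direct substitution gives
\[
b\big|_{z=0}=\frac{(3\alpha-1)\,\theta(\alpha)^\alpha(-h_{yy})^\alpha}{h_z^{3\alpha}},
\]
which is strictly positive since $\alpha>1/2>1/3$ and $-h_{yy}>0$. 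Continuity in $(z,y,t)$ then propagates $b\geq\nu>0$ to all of $\mathcal{B}_\eta$ for small enough $\eta$. The chief subtlety is precisely this algebraic cancellation: the two pieces $b_+$ and $b_-$ are each of order $(-h_{yy})^\alpha/h_z^{3\alpha}$ at the boundary, and only the combined numerical coefficient $4\alpha-1-\alpha=3\alpha-1$ has the sign needed for positivity. This balancing is exactly what the exponent choice $\beta=(3\alpha-1)/(2\alpha-1)$ was engineered to produce under the nondegeneracy assumption of Condition~\ref{cond}, and it is also the algebraic reason the argument fails outside the range $\alpha>1/2$. Once this positivity check and the matrix bounds above are in place, the remainder is a routine translation of Sections~3 and~4 into $h$-estimates.
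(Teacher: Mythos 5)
Your treatment of the diffusion matrix is correct and efficient: the chain-rule dictionary, the identity $h_{zz}h_{yy}-h_{zy}^2 = h_z^4\det D^2 g$ (hence $\det A = K_h = K_g\,h_z^4$), the use of Lemma~\ref{lem-43-1} with $\gamma=e_1$ for the $(2,2)$-entry, and the trace--determinant eigenvalue bound all go through with the estimates of Sections~3 and~4. The computation $b|_{z=0}=(3\alpha-1)\theta(\alpha)^\alpha(-h_{yy})^\alpha/h_z^{3\alpha}$ is also correct and identifies the right cancellation.

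The gap is in the step that ``continuity in $(z,y,t)$ propagates $b\geq\nu>0$ to $\mathcal B_\eta$.'' The lemma requires $\eta$ and $\nu$ to depend \emph{only} on the initial data and $\rho_0$, but a bare continuity argument produces an $\eta$ depending on the modulus of continuity of $b$, which involves third derivatives of $g$ that are not yet uniformly controlled (indeed, proving uniform higher-order control is the whole point of Section~5). To make the lower bound quantitative one must expand for $z>0$: with $K_h = z\,h_z^4\det D^2 g - \theta(\alpha)h_z h_{yy}$ one finds
\begin{equation*}
(4\alpha-1)K_h h_z + \alpha\theta(\alpha)J h_{yy}
= (4\alpha-1)\,z\,h_z^5\det D^2 g \;-\;(3\alpha-1)\theta(\alpha)h_z^2 h_{yy}\;+\;\alpha\theta(\alpha)\,z^{2(\beta-1)}(1+h_y^2)h_{yy},
\end{equation*}
so the numerator of $b$ is $K_h^{\alpha-1}$ times this. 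The middle term is bounded below by a positive constant (Lemma~\ref{lem-46} and Corollary~\ref{cor-44}), and the last is $O(z^{2(\beta-1)})$ with $\beta>2$, but the first term is $(4\alpha-1)z\,h_z^5\det D^2 g$, and without further input $z\det D^2 g$ is merely bounded, not small. It is precisely here that the Aronson--B\'enilan estimate of Lemma~\ref{lem-AB}, $\det D^2 g\geq -C$, is used: it gives $(4\alpha-1)z\,h_z^5\det D^2 g\geq -Cz$, so the numerator is $\geq c - Cz - Cz^{2(\beta-1)}\geq c/2$ on $\{z\leq\eta^2\}$ for $\eta$ depending only on $c,C$, hence only on the initial data and $\rho_0$. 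Your proposal never invokes Lemma~\ref{lem-AB}, and without it the claimed uniformity of $\nu$ and $\eta$ does not follow.
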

Notice that $b\geq\nu>0$ comes from the decay rates of second derivatives, Corollary \ref{cor-44}, and  Aronson-B\'{e}nilan type Estimate, Lemma \ref{lem-AB}. Similarly, we can get the bound of $\tilde A := (\tilde a_{ij})$ and $\tilde b_i$, $i=1,2$ to be
the coefficients
\begin{equation*}
\tilde a_{ij} = \frac{a_{ij}\al K_h ^{\al -1}}{(z^{2(\beta-1)}+ h_z^2 +
z^{2(\beta-1)}\, h_y^2)^{\frac {4\al-1}{2}}}
\end{equation*}
and
\begin{equation*}
 \tilde b_1 =b- \frac{\al\,\theta(\al) K_h^{\al-1} J h_{yy}}{J^\frac{4\al+1}{2}} \,
\qquad \mbox{and} \qquad \tilde b_2 =  \frac{(4\al-1)\,
z^{2(\beta-1)}\,K_h^\al \, h_y}{J^\frac{4\al+1}{2}}
\end{equation*}
of equation \eqref{eqn-hy}.

\begin{lem}\label{lem-52}
There exist constants
$\eta >0$, $\lambda  >0$ and $\nu >0$, depending only on the initial data
and the constant $\rho_0$ in \eqref{co1}, for which
\begin{equation*}
 \lambda \, |\xi|^2 \leq \tilde a_{ij}\, \xi_i\, \xi_j  \leq
\lambda^{-1}
\, |\xi|^2, \qquad \forall \xi \neq 0
\end{equation*} and
\begin{equation*}
|\tilde b_i| \leq \lambda^{-1}\,\,\, \text{and}\,\,\, \tilde b_1 \geq \nu >0 \,\,\, \text{on the box }\, {\mathcal B}_\eta.
\end{equation*}
\end{lem}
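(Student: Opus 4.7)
The plan is to derive Lemma \ref{lem-52} from Lemma \ref{lem-51} by observing that the passage from $(a_{ij},b)$ to $(\tilde a_{ij},\tilde b_i)$ amounts, for the diffusion part, to multiplication by a single scalar factor $\mu := \alpha K_h^{\alpha-1}/J^{(4\alpha-1)/2}$, and, for the drift, to the addition of a sign-controlled correction. Concretely, $\tilde a_{ij} = \mu\, a_{ij}$, $\tilde b_1 = b - \mu\,\theta(\al)\, h_{yy}$, and $\tilde b_2 = (4\al-1)\,z^{2(\beta-1)}\, K_h^\al\, h_y / J^{(4\al+1)/2}$. So everything reduces to establishing uniform two-sided bounds on $\mu$, uniform bounds on $J$ and $K_h$, and exploiting the sign $h_{yy}<0$ dictated by Lemma \ref{lem-51}.

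First I would bound $J$ on $\mathcal B_\eta$. Lemma \ref{lem-46} gives $c \leq h_z \leq c^{-1}$, and the uniform $C^1$-control of $g$ (Lemma \ref{lem-31}) transfers under $x = h(z,y,t)$ to an upper bound on $|h_y|$; combined with $z \leq \eta^2$ and $\beta>1$, this yields $c \leq J \leq c^{-1}$. Next I would bound $K_h$. Writing $K_h$ back in the $(x,y,t)$-coordinates, one sees that $K_h$ is a fixed positive power of $g_x$ times the quantity $g\det D^2 g + \theta(\al)(g_y^2 g_{xx} - 2 g_x g_y g_{xy} + g_x^2 g_{yy})$ that appears in \eqref{eqn-g}. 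The upper bound on $K_h$ then follows from Lemma \ref{lem-43}, Lemma \ref{lem-43-1} and Corollary \ref{cor-44}, while Lemma \ref{lem-41} together with Lemma \ref{lem-AB} supplies the strictly positive lower bound. Since $\alpha-1 \leq 0$, $K_h^{\alpha-1}$ also admits a uniform two-sided bound, and therefore so does $\mu$. Multiplying the ellipticity of $A$ in Lemma \ref{lem-51} by $\mu$ gives the desired uniform ellipticity of $(\tilde a_{ij})$.

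For the drift coefficients, the bound $|\tilde b_2|\leq \lambda^{-1}$ is immediate from the bounds on $h_y$, $K_h$, $J$ and the factor $z^{2(\beta-1)}\leq \eta^{4(\beta-1)}$. The lower bound on $\tilde b_1$ is the key point: Lemma \ref{lem-51} gives $-h_{yy} = a_{11} \geq \lambda > 0$, and the hypothesis $\frac12 < \alpha \leq 1$ gives $\theta(\al) = \al/(2\al-1) > 0$, so the correction $-\mu\,\theta(\al)\, h_{yy}$ is nonnegative. Hence $\tilde b_1 \geq b \geq \nu$, with the matching upper bound $|\tilde b_1| \leq \lambda^{-1}$ coming from the uniform $C^{2}_s$-control of $h$ through $\mu$ and $h_{yy}$.

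The main obstacle is the uniform positive lower bound on $K_h$ all the way up to the free boundary $z=0$: the three summands in $K_h = z(h_{zz}h_{yy}-h_{zy}^2) - \theta(\al) h_z h_{yy}$ carry different orders of vanishing in $z$, and a naive estimate leaves room for cancellation. The cleanest way around this is to transport $K_h$ back to $(x,y,t)$-coordinates and invoke the sharp decay rates of Corollary \ref{cor-44}, which pair $f_{\nu\nu}, f_{\tau\tau}/g^{\beta-1}, |f_{\nu\tau}|/g^{(\beta-1)/2}$ with precisely the weights needed for the Aronson--B\'enilan estimate of Lemma \ref{lem-AB} to yield a uniform positive lower bound on the full expression, as opposed to on its individual summands.
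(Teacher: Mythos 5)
Your proposal is correct and follows the same route the paper intends: the paper gives no explicit proof of Lemma \ref{lem-52}, merely asserting it follows "similarly" to Lemma \ref{lem-51}, and your reconstruction — $\tilde a_{ij}=\mu\,a_{ij}$ with $\mu=\alpha K_h^{\alpha-1}J^{-(4\alpha-1)/2}$ controlled two-sidedly via $J$ and $K_h$, plus the sign observation $\tilde b_1=b-\mu\,\theta(\alpha)h_{yy}\geq b\geq\nu$ using $h_{yy}<0$ — supplies exactly the missing details, invoking the same ingredients (Lemma \ref{lem-46}, Corollary \ref{cor-44}, Lemma \ref{lem-AB}) the paper cites for the analogous bound on $b$. (A marginally shorter path to the two-sided bound on $K_h$ is to read it off from $-h_t=K_h^{\alpha}J^{-(4\alpha-1)/2}=g_t/g_x$ together with Corollary \ref{cor-42}, but your argument via $K_g=g\det D^2g+\theta(\alpha)g_\nu^2 g_{\tau\tau}\geq -Cg+\theta(\alpha)c>0$ is equally valid.)
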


\medskip

\subsection{Regularity theory}
Recall the Definition \ref{def-sp}. Then Lemma \ref{lem-52} tells us the linearized equation \eqref{eqn-hy} is in the same class of operators considered  at Lemma 5.2 of \cite{DL3} and \cite{DL2}.

We are now in position to show the uniform H\"older  bounds of the
first order derivatives $h_t$, $h_y$ and $h_z$ of $h$ on $\mathcal B_\eta$.
In [DL3], the authors have obtained the
$C^{2,\gamma}_s$ regularity of $h$ on the box.

\begin{lem}\label{lem-53}
There exist numbers $\gamma$ and $\mu$ in $0 < \gamma,\mu <1$, and  positive
constants $\eta$ and $C$, depending only on the initial data and
$\rho_0$, such that
$$
\| h_y\|_{C^{2+\gamma}_s(\mathcal B_{\frac \eta2})} \leq C,\qquad \| h_t\|_{C^{2+\gamma}_s(\mathcal B_{\frac
\eta2})} \leq C,\qquad
\text{and} \qquad \| h_z\|_{C^\mu_s(\mathcal B_{\frac \eta2})} \leq C.
$$\end{lem}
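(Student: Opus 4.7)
The plan is to transfer the regularity theory already developed in \cite{DL3, DL2} for degenerate parabolic operators of the type \eqref{eqn-hy} to our operator, once we verify that the coefficient bounds in Lemma \ref{lem-52} give exactly the structural hypotheses used there. Specifically, the key features are: the diffusion matrix $\tilde a_{ij}$ is uniformly elliptic in the sense $\lambda|\xi|^2 \leq \tilde a_{ij}\xi_i\xi_j \leq \lambda^{-1}|\xi|^2$ relative to the metric $ds^2 = dz^2/z + dy^2$; the drifts $\tilde b_i$ are bounded; and crucially the transversal drift satisfies $\tilde b_1 \geq \nu > 0$ near $\{z=0\}$. This last inequality, which follows from Corollary \ref{cor-44} and the Aronson--B\'enilan estimate of Lemma \ref{lem-AB}, is what makes the boundary $\{z=0\}$ behave like a non-characteristic boundary for the degenerate diffusion generated by $ds$.

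First I would establish $C^\gamma_s$ bounds on the first derivatives. Differentiating \eqref{eqn-h} in the tangential variables $y$ and $t$ yields equations for $h_y$ and $h_t$ of exactly the linearized form \eqref{eqn-hy}; Lemma \ref{lem-52} guarantees that these linearized operators fall in the class of degenerate parabolic operators for which a Krylov--Safonov/Harnack-type H\"older estimate was proved in \cite{DL2, DH}. Applying that estimate on $\mathcal B_\eta$ yields the $C^\gamma_s$ bound for $h_y$ and $h_t$ on a smaller box. The bound on $h_z$ is extracted algebraically from equation \eqref{eqn-h}: solving for $h_z$ in terms of $h_t$, $h_{yy}$, $h_{zz}$, $h_{zy}$ and using the uniform two-sided bounds on second derivatives from Corollary \ref{cor-44} together with the already-proved $C^\gamma_s$ bound on $h_t$ gives the $C^\mu_s$ bound on $h_z$ for some exponent $\mu \in (0,1)$, possibly smaller than $\gamma$.

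Next I would bootstrap in the tangential directions using Schauder theory for the same class of degenerate operators. Once the coefficients $\tilde a_{ij}, \tilde b_i$ of the linearized equation \eqref{eqn-hy} are shown to be of class $C^\gamma_s$ (which follows from step 1 combined with the explicit formulas for these coefficients as smooth functions of $h, Dh, D^2 h$ on the region where the a priori bounds of Lemmas \ref{lem-43}--\ref{lem-43-1} and Corollary \ref{cor-44} hold), the Schauder estimate of \cite{DL2, DH} applied to the equations satisfied by $h_y$ and $h_t$ upgrades them to $C^{2+\gamma}_s$ on $\mathcal B_{\eta/2}$. The exponent $\gamma$ may need to be reduced at this stage.

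The main obstacle is that one cannot symmetrically differentiate \eqref{eqn-h} in $z$ and stay within the same class of operators; a $z$-derivative interacts badly with the degenerate direction. This is precisely the reason why the lemma distinguishes between the tangential derivatives $h_y, h_t$, for which full $C^{2+\gamma}_s$ regularity is obtained, and $h_z$, for which only $C^\mu_s$ is claimed. The verification that the metric $ds$, combined with the non-degeneracy $\tilde b_1 \geq \nu > 0$, puts \eqref{eqn-hy} inside the Daskalopoulos--Hamilton framework, and the confirmation that the structural constants depend only on the a priori quantities controlled in Sections 3 and 4, are the essential technical points; the rest is a direct invocation of the linear theory in \cite{DL2, DH}.
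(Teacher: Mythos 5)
Your proposal is correct and follows essentially the same route as the paper, which proves Lemma \ref{lem-53} by verifying (via Lemmas \ref{lem-51} and \ref{lem-52}) that the linearized operator \eqref{eqn-hy} falls into the class of degenerate parabolic operators treated in \cite{DL3, DL2, DH}, and then invoking the degenerate Krylov--Safonov and Schauder estimates proved there. The details you supply — differentiating \eqref{eqn-h} tangentially in $y$ and $t$ to obtain equations of the form \eqref{eqn-hy}, extracting the $C^\mu_s$ bound on $h_z$ algebraically from the equation using Corollary \ref{cor-44}, and bootstrapping with the Schauder estimate — are exactly the steps carried out in \cite{DL3}, to which the paper defers.
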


Following the proof Theorem 6.2 of \cite{DL3}, we will have the following theorem.
\begin{thm}\label{thm-6.10}
With the same assumptions of Theorem \ref{thm-1.3} and condition \eqref{co1} which satisfies at $T < T_c$, there exist constants $0 < \alpha_0 <
1$, $C<\infty$ and $\eta >0$, depending only on the initial data
and $\rho_0$, for which $x=h(x,y,t)$ fulfills 
$$\|h\|_{C^{2+\gamma}_s(\cB_{\eta})} \leq C$$
on  $\cB_\eta= \{ \, 0\leq z \leq \eta^2 , \,\, |y-y_0| \leq \eta,
\,\, t_0-\eta^2 \leq t \leq t_0 \,\, \}$
for $P_0=(x_0,y_0,t_0)$ with $0 < \tau <  t_0 < T$, which is any free-boundary point holding condition \eqref{nvec}.
\end{thm}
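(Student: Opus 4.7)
The plan is to close the regularity argument begun in Lemma \ref{lem-53} by combining those intermediate estimates with the algebraic inversion of the fully nonlinear equation \eqref{eqn-h}. By Definition \ref{def-sp}, bounding $\|h\|_{C^{2+\gamma}_s(\cB_\eta)}$ amounts to controlling the $C^\gamma_s$-norms of $h,\, h_z,\, h_y,\, h_t,\, zh_{zz},\, z\sqrt{z}\, h_{zy},\, h_{yy}$. Lemma \ref{lem-53} already furnishes $h_y, h_t \in C^{2+\gamma}_s(\cB_{\eta/2})$; unpacking Definition \ref{def-sp}, this implies $h_y,\, h_t,\, h_{yy},\, h_{zy},\, h_{yt},\, h_{tz} \in C^\gamma_s$. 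In particular, since $z^{3/2}$ is Lipschitz in the metric $s$, the product $z\sqrt{z}\, h_{zy}$ lies in $C^\gamma_s$ for free. Only $h$, $h_z$, and $zh_{zz}$ remain.

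For $zh_{zz}$, I would invert \eqref{eqn-h} algebraically. With $K_h = z(h_{zz}h_{yy}-h_{zy}^2) - \theta(\alpha) h_z h_{yy}$ and $J = z^{2(\beta-1)}+h_z^2+z^{2(\beta-1)}h_y^2$, the equation reads
$$K_h = (-h_t)^{1/\alpha}\, J^{(4\alpha-1)/(2\alpha)}.$$
Corollary \ref{cor-42}, Corollary \ref{cor-44} and Lemma \ref{lem-46} ensure that $-h_t$, $h_{yy}$, $h_z$, and $J$ are uniformly bounded above and away from $0$, so the nonlinear compositions with $(\cdot)^{1/\alpha}$ and $(\cdot)^{(4\alpha-1)/(2\alpha)}$ are smooth on the relevant range and the right-hand side inherits the Hölder class of its arguments. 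Solving for $zh_{zz}$ via
$$zh_{zz} = h_{yy}^{-1}\bigl[\, K_h + z h_{zy}^2 + \theta(\alpha) h_z h_{yy}\,\bigr]$$
gives $zh_{zz} \in C^{\min(\gamma,\mu)}_s$ initially, and $\in C^\gamma_s$ after the following bootstrap. To upgrade $h_z$ from $C^\mu_s$ to $C^\gamma_s$, I would follow Theorem 6.2 of \cite{DL3}: $(h_z)_y = h_{zy}$ and $(h_z)_t = h_{zt}$ are already in $C^\gamma_s$, while the Hölder regularity of $h_z$ in the degenerate $z$-direction follows from the $C^{2+\gamma}_s$-Schauder theory of \cite{DH} applied to the linearized equation \eqref{eqn-hy}, whose operator belongs to the good degenerate class by Lemma \ref{lem-52}. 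Finally $h \in C^\gamma_s$ follows from the $L^\infty$ bound on $h$ and integration of its uniformly bounded first derivatives along paths in the metric $s$.

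The main obstacle, compared to the $\alpha=1$ case treated in \cite{DL3}, is tracking the nonlinear $\alpha$-dependence: the exponents $1/\alpha$ and $(4\alpha-1)/(2\alpha)$ produce compositions that have to be shown to preserve the $C^\gamma_s$-class, and the extra factors proportional to $(1-\alpha)$ coming from differentiating $K_h^\alpha$ populate the equation with many cross terms. Here the uniform non-degeneracy supplied by Corollaries \ref{cor-42}, \ref{cor-44} and Lemma \ref{lem-46} is essential: it pins $-h_t$, $h_{yy}$, $h_z$, $J$ strictly inside the domain of smoothness of these composite functions, so every step proceeds with constants depending only on the initial data, $\alpha$, and $\rho_0$. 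With this bookkeeping in place, the rest of the argument follows \cite{DL3} almost verbatim and yields the uniform bound $\|h\|_{C^{2+\gamma}_s(\cB_\eta)} \leq C$ at any free-boundary point $P_0$ for which \eqref{nvec} holds.
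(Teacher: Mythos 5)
Your proposal follows essentially the same route the paper relies on (citing Theorem 6.2 of \cite{DL3}): use Lemma \ref{lem-53} to get $h_y,h_t\in C^{2+\gamma}_s$ and $h_z\in C^\mu_s$, then invert \eqref{eqn-h} to recover $zh_{zz}$ from $h_t,h_{yy},h_{zy},h_z$ using the non-degeneracy bounds from Corollaries \ref{cor-42}, \ref{cor-44} and Lemma \ref{lem-46}, noting that smooth composition with $(\cdot)^{1/\alpha}$ and $(\cdot)^{(4\alpha-1)/(2\alpha)}$ preserves the H\"older class because the arguments stay in a compact interval bounded away from zero. One small simplification: the bootstrap of $h_z$ from $C^\mu_s$ to $C^\gamma_s$ is not needed, since the theorem only asserts the bound for \emph{some} H\"older exponent, so one can simply take the final exponent to be $\min(\gamma,\mu)$ and conclude directly from Lemma \ref{lem-53} together with the algebraic inversion.
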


\noindent{\bf Proof of Theorem \ref{thm-1.4}.}  By the short time existence
Theorem \ref{thm-1.3}, there exists a  maximal time $T >0$ for which $g$ is
smooth up to the interface on  $0 < t < T$. Assuming  that $T <
T_0$, we  will  show that at time $t=T$, the function  $g(\cdot,
T)$ is of class $C^{2+\gamma}_s$, up to the interface $z=0$, for
some $\gamma >0$, and  satisfies the non-degeneracy conditions
\eqref{eqn-condition}. Therefore, by Theorem [DH], there exists a number
$T' >0$ for which $g$ is of class  $C^{2+\gamma}_s$, for all $
\tau < T+T'$, and hence $C^\infty$ up to the interface, according
to Theorem 9.1 in \cite{DH}. This will contradict the fact that
$T$ is maximal, proving the Theorem.
From Lemma \ref{lem-31} and Corollary \ref{cor-44}, the  functions
$g(\cdot,t)$ satisfy conditions \eqref{eqn-condition}, for all $0 \leq t
< T$, with constant $c$ independent of $t$. Hence, it will be
enough to establish the uniform $C^{2+\gamma}_s$ regularity of
$g$, on $0 \leq t \leq T$, up to the interface, whose proof follows the same line of argument at \cite{DL3}.
\qed
\medskip

\noindent{\bf Acknowledgement.} Ki-Ahm Lee was supported by Basic Science Research
Program through the National Research Foundation of Korea(NRF)  grant funded by the Korea government(MEST)(2010-0001985).

}
\end{document}